\def\leq{\leqslant}
\def\geq{\geqslant}
\def\N{\mathbb{N}}
\def\R{\mathbb{R}}
\def\V{{\bf V}}
\def\W{{\bf W}}
\def\U{{\bf U}}
\def\tv{\tilde{{\bf V}}}
\def\CQFD{\hfill $\Box$}
\newtheorem{Proposition}{Proposition}[section]
\newtheorem{Lemma}[Proposition]{Lemma}
\newtheorem{Definition}[Proposition]{Definition}
\newtheorem{Corollary}[Proposition]{Corollary}
\newtheorem{Remark} [Proposition]{Remark}
\newtheorem{Theorem}{Theorem}
\begin{document}
%
%
%
%
%
%
%
%
%
%
%
%

\begin{center}
{\LARGE Integrate and Fire Neural Networks,}

{\LARGE Piecewise Contractive Maps and Limit Cycles.}

\vspace{.3cm}
Eleonora Catsigeras\footnote {Instituto de Matem\'{a}tica, Universidad de la
Rep\'{u}blica, Montevideo, Uruguay, 
{\tt eleonora@fing.edu.uy}} \ and Pierre  Guiraud\footnote{Centro de Investigaci\'on y Modelamiento de Fen\'omenos Aleatorios -- Valpara\'iso, Facultad de Ingenier'a, Universidad de Valpara\'{\i}so,  Valpara\'{\i}so,
Chile, {\tt pierre.guiraud@uv.cl}}
\end{center}


\begin{abstract}
We study the global dynamics of integrate and fire neural networks composed of an arbitrary number of identical neurons interacting by inhibition and excitation. We prove that if the interactions are strong enough, then the support of the stable asymptotic dynamics consists of limit cycles. We also find sufficient conditions for the synchronization of networks containing excitatory neurons. The proofs are based on the analysis of the equivalent dynamics of a piecewise continuous Poincar\'e map associated to the system. We show that for efficient interactions the Poincar\'e map is piecewise contractive. Using this contraction property, we prove that there exist a countable number of limit cycles attracting all the orbits dropping into the stable subset of the phase space. This result applies not only to the Poincar\'e map under study, but also to a wide class of general $n$-dimensional piecewise contractive maps.
\end{abstract}

Keywords: Integrate and fire neural networks;
 piecewise contractive maps; limit cycles; synchronization.

MSC: 37N25  92B20  34C15  34D05  54H20

 \section{Introduction}

Numerous physical or biological systems can be seen as composed of a large number of units in interaction. In many occasions, their time evolution is modeled by a system of coupled differential equations, or by a high dimensional discrete time dynamical system. Those models take into account a proper individual dynamics for each unit and a coupling between units which may depend on the state of the whole system. Typical example of such models are coupled oscillators (continuous time) and coupled map lattices (discrete time) \cite{CF}. They usually assume
well mathematically characterized individual dynamics, the main question under study being how the coupling of units can generate the collective behaviors observed in physical and biological systems. Nevertheless, from a mathematical point of view, these systems are a source of open problems and most mathematical results have been proved under the assumption of weak coupling and$/$or focus on particular solutions \cite{Y08}.

Pulse-coupled oscillators appear frequently in biological sciences, in particular in neuroscience to model neural networks \cite{MB}. In this context, the state of each oscillator describes the difference of electrical potential between the inside and the outside of a neuron's membrane. An archetype of pulse-coupled neural network appears in literature \cite{IZHI99b} in the following form:
\begin{equation}\label{PULSEDCOUPLED}
\dot V_i= f_i(V_i) + \sum_{j = 1}^n   h_{ji}(V_i)\delta(t-t_{j}) \qquad\forall\, i\in\{1,\dots,n\}.
\end{equation}
The solutions of the non-coupled equation $\dot V_i = f_i(V_i)$ define the individual dynamics of the membrane potential of the neuron $i$.
The following additional rule is assumed:
if the potential $V_i$ reaches the so called threshold potential $\theta >0$ at an instant denoted $t_i$, then the
neuron $i$ is said to fire (or to emit a spike) and its potential is reset to zero. The term $ h_{ji}(V_i) \delta(t-t_j)$ is a short hand notation meaning that at time $t_j$ the potential of the neuron $i$ suffers a discontinuity jump of amplitude $h_{ji}(V_i(t_j^-))$. This discontinuity $h_{ji}$ is produced
   by the firing of the presynaptic neuron $j$ on the potential $V_i(t_j^-)$ of the postsynaptic neuron $i$. If the jump is negative ($h_{ji}<0$) the interaction is said inhibitory and if it is positive ($h_{ji}>0$) it is said excitatory.

When weak interactions are assumed, and extra conditions on $f_i$ are imposed, it is possible to reduce (\ref{PULSEDCOUPLED}), and also more realistic neural models, to a canonical system of phase coupled oscillators \cite{HI, IZHI99a}. This opens the possibility to get some insight in the dynamics of a huge class of weakly coupled neural networks by studying for example the existence and the stability of synchronized states \cite{E96, HI, IZHI99b}. Further insight in the dynamics of neural networks can be obtained by considering specific models. In this respect (leaky) Integrate and Fire (IF) neural networks \cite{GK02} are certainly the most popular. For these networks $f_i$ is a real affine function. Many mathematical works on IF neural networks deal with the dependence of particular solutions on  the parameters describing the interactions (that are not necessarily impulsive). The effect of the velocity of the interactions on the stability of synchronized and anti-synchronized states is detailed for weak excitatory and inhibitory interactions in the case of two \cite{VAE94, CL97} or more
neurons \cite{V96} and in presence of delay \cite{CL97}, 
\cite{BC00}. Synchronized solutions and more generally phase-locked solutions are also studied in the case of strong coupling for different architectures of network \cite{BC00}.

Although important results about the phenomenology of IF neural networks have been obtained, they principally focus on particular solutions, and there is still few mathematical results about their
{\em global dynamics}, possibly letting unknown important features of neural networks. The purpose of this paper is precisely to give a mathematical description, developing analytical proofs, of
the global dynamics of IF neural networks. In spite of IF neural networks being continuous time dynamical systems, as far as we know, previous studies of their global dynamics develop methods of discrete time dynamical systems. In their seminal work  \cite{MS90}  study a Poincar\'{e} return map to prove that for the system (\ref{PULSEDCOUPLED}) with homogeneous constant excitatory interactions ($h_{ji}=cte>0$) and homogeneous individual dynamics ($f_i=f$), almost all orbits become synchronized. In \cite{C08} a discrete time IF neural network, which may be seen as a discretization of (\ref{PULSEDCOUPLED}) by a formal Euler scheme, is studied. The corresponding dynamical system is defined by the iterations of a general piecewise affine map.
It is proved that for generic values of the parameters, the global asymptotic dynamics is supported on a finite number of stable periodic orbits. It is also proved that for non generic values of the parameters, asymptotic dynamics is sensitive to initial conditions.

Motivated by the rigorous results of \cite{C08}, which are proved for discrete time system,
the question we address in this paper is if they are also true for continuous time IF neural networks. Although elaborated integration schemes, especially designed for the simulation of neural networks have been developed \cite{BRCHBB07}, there is no way to completely eliminate numerical
errors. Thus, the results of simulations can drastically depend on the used integration strategy \cite{RD06}. This motivates the interest of developing rigorous mathematical proofs also
in the continuous time case. In this respect, the previous works \cite{C10, CRB, J02} proved that periodic orbits attract almost all initial conditions, but under the assumption that the interactions are all inhibitory, while arbitrary interactions are considered in \cite{C08}.

The main property allowing the proof of these results is that the (return) map is {\em piecewise contractive} in the whole phase space. In Section \ref{SECCIONMODEL}, we derive the return map of
(\ref{PULSEDCOUPLED}), assuming $f_i(V_i)=-\gamma V_i + K$, the independence of $h_{ji}$ on $V_i$,
and other hypothesis ((H1) and (H2), stated in the same section)  giving a precise meaning to (\ref{PULSEDCOUPLED}).
In Section \ref{SECCIONCONTRACPROP}, we investigate the contraction properties of the return map. We prove that, unexpectedly, it is not piecewise contractive in the whole phase space for an open region of the interactions values, if some of them are excitatory (Theorem \ref{SNOCONT}). Nevertheless, we find also an open subset in the space of parameters such that this return map is piecewise contractive in the whole phase space, with respect to an adapted metric (Theorem \ref{TMETRIC}). This parameters subset is defined by hypothesis (H3) and (H4) stated in Section \ref{SECCIONCONTRACPROP}.

We give now a short version of our results about the global dynamics of IF neural networks:
\begin{Theorem}\label{Theorem1}
1) Under Hypothesis (H1), if the neural network is completely excitatory and the number of neurons is sufficiently large, then all the orbits are eventually periodic and synchronized.

\noindent 2) Under the hypothesis (H1), (H2), (H3) and (H4), if the neural network contains inhibitory neurons, then the stable orbits are attracted by a countable number of limit cycles.
\end{Theorem}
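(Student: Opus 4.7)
The plan is to reduce both statements to the analysis of the Poincar\'e return map $F$ constructed in Section~\ref{SECCIONMODEL}, whose orbits encode the successive firing events of the continuous-time system~(\ref{PULSEDCOUPLED}). Once this reduction is in place, part~(1) follows by a synchronization-by-absorption argument of Mirollo--Strogatz type, while part~(2) follows from Theorem~\ref{TMETRIC} combined with a general structure result for piecewise contractive maps.

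For part~(1), under (H1) every firing of a neuron $j$ adds a strictly positive jump $h_{ji}>0$ to the potential of every other neuron $i$. The key observation is that if the number $n$ of neurons is large enough, the sum $\sum_{j\ne i}h_{ji}$ can be made to exceed the threshold $\theta$, so that a single simultaneous firing of a sufficiently large cluster pushes all remaining neurons above threshold and forces them to fire instantaneously: this is the absorption phenomenon. The argument would proceed in two steps. First, I would show that every initial condition produces, after finitely many firings, a non-trivial synchronized cluster; this rests on the fact that on the continuity pieces the Poincar\'e map is order preserving and weakly contractive on the coordinates measuring relative delays between neurons, so that pairs of nearby firing times must merge in finite time. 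Second, once a cluster has enough members, its next firing absorbs the rest. Eventual periodicity is then immediate, since after full synchronization all neurons reset together and the subsequent evolution is governed by the single scalar equation $\dot V=-\gamma V+K$.

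For part~(2), by Theorem~\ref{TMETRIC} the Poincar\'e map $F$ is piecewise contractive with respect to an adapted metric $d$: there exist a finite partition $\{A_1,\dots,A_N\}$ of the phase space into continuity pieces and a constant $\lambda<1$ such that $d(F(x),F(y))\le\lambda\,d(x,y)$ whenever $x,y$ lie in the same $A_k$. Let $\Sigma$ be the stable set of points whose forward orbit never meets the boundary of the partition. For $x\in\Sigma$ the itinerary $\iota(x)=(k_0,k_1,\dots)$ defined by $F^j(x)\in A_{k_j}$ is well defined and two stable points with the same itinerary have forward orbits at distance $O(\lambda^j)$. I would then show that the $\omega$-limit set of every stable orbit is a single periodic orbit of $F$: strict contraction on each piece, combined with the finiteness of the partition, forces the itinerary of any stable orbit to become eventually periodic, and two orbits with the same eventually periodic itinerary are asymptotic, so they converge to a common cycle. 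Since the set of eventually periodic itineraries over an alphabet of size $N$ is countable, there are at most countably many limit cycles, and every stable orbit is attracted to one of them. The return to the continuous-time system is automatic, because a periodic orbit of $F$ lifts canonically to a periodic firing pattern of~(\ref{PULSEDCOUPLED}), hence to a limit cycle of the flow.

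The main obstacle lies in part~(2): strict piecewise contractivity alone does not automatically force the $\omega$-limit set of a stable orbit to reduce to a single periodic cycle, since in general the set of admissible itineraries of a piecewise contractive map can be uncountable and the map may display Cantor-like asymptotics. What I expect to save the argument is the combination of the adapted metric of Theorem~\ref{TMETRIC} with the affine structure inherited from $f_i(V)=-\gamma V+K$: these together should yield that distinct periodic itinerary classes give rise to orbits uniformly separated in phase space, so that only countably many of them can coexist and each necessarily corresponds to an attracting cycle. Turning this heuristic into a quantitative bound, and phrasing it abstractly enough that it applies to a general $n$-dimensional piecewise contractive map (as the abstract advertises), is where I expect the technical core of the proof to reside.
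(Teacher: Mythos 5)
Both halves of your proposal leave the decisive step unproved, and in part (1) the mechanism you invoke is actually contradicted by results in the paper. You argue that on the continuity pieces the return map is ``order preserving and weakly contractive on the coordinates measuring relative delays'', so that nearby firing times merge and every initial condition develops a synchronized cluster which then absorbs the rest. The paper shows that the return map \emph{expands} distances on the sets $\Gamma_i$ (Lemma~\ref{LEMDILAT2}), and Proposition~\ref{NOGLOBALSYNCHRO} exhibits a completely excitatory two--neuron network with a non-synchronized period-two orbit (extendable to $2k$ neurons), so merging of firing times is not a general feature and cluster formation is precisely what needs a hypothesis on $n$. Your proposal never uses any quantitative largeness of $n$: the only role you assign to it is to make $\sum_{j\ne i}h_{ji}>\theta$, which already holds for $n>1+\theta/m$ with $m=\min_{j\ne i}H_{ji}$, whereas the theorem needs $n\ge\lceil\theta/m\rceil^2$. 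The paper's proof (Theorem~\ref{GLOBALSYNCHRO}) is a counting argument with no merging at all: once all potentials are nonnegative, a neuron that has not fired gains more than $m$ at each return, hence every neuron fires within $p=\lceil\theta/m\rceil$ returns (Claim~1); if $n\ge p^2$, the pigeonhole principle gives a return at which at least $n/p\ge\theta/m$ neurons fire simultaneously (Claim~2), and their combined excitation, at least $\theta$, drives every remaining neuron over threshold at that same instant, so the orbit hits $\mathbf{0}$, which is a fixed point of $\rho$; this is where eventual periodicity and synchronization come from. Your ``step 1'' is the genuine gap.

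For part (2) you correctly reduce to the piecewise contraction of Theorem~\ref{TMETRIC}, but you take ``stable'' to mean ``the forward orbit never meets the partition boundary'' and then claim that strict contraction plus finiteness of the partition forces eventually periodic itineraries --- and you yourself concede that piecewise contractivity alone does not give this (Cantor-like, non-periodic attractors exist; cf.\ the discussion of the sensitive set and \cite{CGMU}). The rescue you hope for (affine structure forcing uniform separation of periodic itinerary classes) is not the paper's mechanism and is left entirely heuristic, so the core of the statement remains unproved in your plan. The two missing ingredients are: first, the paper's stable set is defined by a Lyapunov-type condition, and because $\rho$ has a uniform discontinuity jump $\mu>0$ on $\partial\mathcal{P}^*$ (Lemma~\ref{lemacontinuidad1}), every stable orbit stays at distance at least some $\eta>0$ from $\partial\mathcal{P}^*$ (Lemma~\ref{MORTAL}), which is strictly stronger than merely avoiding the boundary; second, on each $S_\eta$ one works with the atoms $F_{i_k}\circ\dots\circ F_{i_1}(\Sigma^*_{\{i_1\}})$, whose diameters decay like $\lambda^{k}$, so for $k$ large every atom's closure lies in a single continuity piece (Lemma~\ref{INUTIL}); the finitely many atoms of that generation then map among themselves, and a Banach fixed point argument applied to a cyclically mapped chain of atoms (Lemma~\ref{FANTOM}) yields finitely many limit cycles attracting all of $S_\eta$ (Proposition~\ref{CYCLESETA}). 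Countability then follows by writing $S=\bigcup_{k}S_{l/k}$. Without the uniform distance $\eta$ your ``stable'' orbits include points whose orbits approach $\partial\mathcal{P}^*$, for which the eventually-periodic-itinerary claim is false; without the atom construction you have no mechanism forcing periodicity even away from the boundary.
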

The part 1) of this theorem is reformulated in Theorem \ref{GLOBALSYNCHRO}, which is proved in Section \ref{SYNCHRO}. Its proof is done only under hypothesis (H1), but the hypothesis of existence of a large number of neurons is necessary. In fact, if the neural network is completely excitatory, but the number of neurons is not sufficiently large (in relation with the minimum amplitude of the synaptic interactions), then there exist orbits that are never synchronized (Proposition \ref{NOGLOBALSYNCHRO}).

The part 2) follows from Theorem \ref{teoremaPrincipal} in Section \ref{ASYMPTOTIC}. This last theorem states that the stable asymptotic dynamics of the return map is supported by periodic orbits.
However, it does not prevent the system from exhibiting a weak form of chaos. In particular, stable   chaos 

\cite{PT10} exists because of the coexistence of an uncountable set of sensitive states. The distance between the limit cycles (attracting all the stable dynamics) and the sensitive points influences the period of the cycles and determines the effects of small perturbations (rounding errors, stochastic perturbations). The expected effects are the existence of long transient times and
of cycles of large period, provided the sensitive points and the stable points are sufficiently intricate.

The proof of Theorem \ref{teoremaPrincipal} does not use the particular formulation of the return map. It applies to a wide class of piecewise contractive maps (see Definition \ref{DEFCONTRACTIVEMAP}). Previous results \cite{B06, BD09, CB, C08, GT86}, stating the existence of periodic attractors for piecewise contractive maps, are proved in a different context. In \cite{B06} the maps are one dimensional and injective. In the works studying higher dimensional dynamics, only affine maps \cite{C08, BD09} or injective maps \cite{CB} are considered.  In our case  none of these hypothesis is assumed, since the proof applies to $n$-dimensional maps, that are neither necessarily piecewise affine nor globally injective.

\section{Integrate and fire neural network}\label{SECCIONMODEL}

We propose to study the global dynamics of leaky integrate and fire neural networks. Our working model is a standard IF neural network considering an arbitrary number of neurons connected by inhibitory and excitatory synapses. This system, which is defined precisely in Subsection \ref{secdefmodel}, is the model (\ref{PULSEDCOUPLED}) with $f_i(V_i)=-\gamma V_i + K$ and where $h_{ji}(V_i)=H_{ji}$ is independent of $V_i$.
Its global dynamics is studied in the next sections via a Poincar\'e map which is derived
in Subsection \ref{RETURNMAP}.

\subsection{Definition of the model}\label{secdefmodel}

At each time $t\in\R$, the state of a neuron $i\in I:=\{1,\dots, n\}$ is described by  its membrane potential
$V_{i}(t)$ and the state of the network is represented by the vector ${\bf V}(t)=(V_1(t),\dots,V_n(t))$. According to model (\ref{PULSEDCOUPLED}), the time evolution of the network has two regimes: a sub-threshold regime and a firing regime.

\vspace{1ex}

\noindent  {\bf Equations of the sub-threshold regime:} The sub-threshold regime occurs when $V_i(t)<\theta$ for all $i\in I$, where $\theta>0$ is called the threshold potential. In such a regime, the state of the network satisfies the system of differential equations defined by:
\begin{equation}\label{IF}
\dot V_i(t) = -\gamma V_i(t) + K  \qquad \forall\,i\in I.
\end{equation}
The constant $\gamma>0$ stands for $1/RC$ where $R$ and $C$ are respectively the resistance and the capacity of the neural membrane, and $K=I_{ext}/C>0$ is proportional to a constant external current $I_{ext}$. According to Equation (\ref{IF}), the potential of each neuron tends to the equilibrium value:
\begin{equation}\label{BETA}
\beta:=\frac{K}{\gamma}=RI_{ext}>0.
\end{equation}

\vspace{1ex}

\noindent  {\bf Equations of the firing regime:} If we assume $\beta>\theta$, and take an initial state $\V(0)$ such that $V_i(0) < \theta$ for all $i \in I$, then there exists a smallest time $t_0$ (which depends on $\V(0)$) when the potential of (at least) one neuron reaches the threshold. At this instant the network enters in the firing regime: the neuron emits a spike that induces a change in the potential of all the neurons it is connected with, and its own potential is reset to a smaller value than the threshold, chosen equal to 0 (without loss of generality, and as conventional for integrate and fire models). Therefore, when the network enters in the firing regime,
its state suffers a discontinuity due to the reset of the firing neurons, and to the change of potential of the neurons receiving the spikes of the firing neurons. Formally, if $J\subset I$ denote the set of \em all \em the neurons that reach $\theta$ at time $t_0$, i.e. spontaneously (by the solution flow of equation (\ref{IF})) or by an excitation produced by other neurons, the state of the network satisfies:

\begin{equation}\label{FIRE}
\lim_{t\downarrow t_0} V_i(t)=0 \quad\mbox{if}\quad i\in J \quad\mbox{ and } \quad \lim_{t\downarrow t_0} V_i(t)= \lim_{t\uparrow t_0} V_i(t) + \sum\limits_{j\in J}H_{ji} \quad\mbox{if}\quad i\notin J.
\end{equation}
The constant $H_{ji}$ represents the synaptic interaction triggered by a spike of the neuron $j$ towards the neuron $i$. It is positive for an excitatory synapse, negative for an inhibitory synapse, and equal to 0 if the neurons are not connected. Due to the instantaneous character of the reset and of the synaptic interactions, in presence of excitatory neurons, the dynamics of IF models of type (\ref{PULSEDCOUPLED}) may be ill-defined or exhibit infinite firing rates when the network has some loops\footnote{For example, consider a loop of two neurons where neuron 1 is excitatory, neuron 2 is inhibitory, and the synaptic interactions are sufficiently large to lead to the following situation: the neuron 1 fires instantaneously and induces the instantaneous firing of the neuron 2, but this last firing prevents instantaneously neuron 1 from firing. Then, the state of neuron 1 is undetermined.}, unless some kind of refractory period is considered. To ensure the model is well defined for any network, we suppose that a neuron which fires at time $t_0$, can neither receive nor emit a second spike at time $t_0$ (right side of (\ref{FIRE})). In this sense we consider a refractory phenomenon, which as well as the reset and the synaptic interactions is instantaneous.
It results that the set $J$ of the neurons that fire at time $t_0$ has to be defined and computed carefully to take into account the refractory phenomenon. In Section \ref{RETURNMAP}, we write the exact mathematical definition of this set (formula (\ref{AVAL}) and (\ref{JAVAL})).

\begin{Definition} \em We will say that a neuron $j$ is excitatory (inhibitory) if all its synapses are excitatory (inhibitory), i.e $H_{ji}\geq 0$ ($H_{ji}\leq 0$) for all  $i\in I$ such that $j \neq i$. We will say that a neuron is ``mixed" if it is neither excitatory nor inhibitory, namely, if it does not satisfy Dale's principle (see \cite{HI} page 7).\em
\end{Definition}

To prove Theorem \ref{Theorem1} and other results along this paper, we will make
the following hypothesis:

\vspace{1ex}
\noindent {\bf (H1)} The membrane potential of the neurons has a lower bound, i.e.
there exists $\alpha<0$ such that for all $i\in I$ and $t\in\R$, we have $V_i(t)\geq\alpha$.

\vspace{1ex}
\noindent {\bf (H2)} If a neuron $i$ suffers inhibitory and excitatory interactions at the same time, and if the sum of the excitatory interactions is  large enough to make it reach the threshold potential, then the neuron $i$ fires.

\vspace{1ex}

The hypothesis (H1) fits with the physical bounds of the electric potentials of real biological
or electronic systems: the potential of a neuron can not be arbitrarily small. We do not specify any particular negative value for $\alpha$. It is a parameter of the model which can change quantitative results, but has little impact on the qualitative dynamics of the network.


The hypothesis (H2) is used only to determine the neurons that fire (those of the set $J$ of the equation (\ref{FIRE})), but both excitatory and inhibitory interactions are considered to update the states of the neurons that do not fire (r.h.s. of equation (\ref{FIRE})). Hypothesis (H2) is a technical hypothesis that we need to prove our results, but it solves an indetermination that appears anyway in any network with instantaneous interactions and has a biological interpretation, as
we explain now. In a model for which the interactions are not instantaneous, during a short time interval $\tau$ where the neuron are interacting, synaptic weights of different signs are added to the potential of the post--synaptic neuron in a certain order. If the sum of the excitatory weights is larger than the threshold but the total sum of synaptic weight is smaller than the threshold, in the time interval $\tau$, the following two situations can arise:

\vspace{1ex}
\noindent 1) Excitatory signals arrive first to a neuron $i$, and their sum is large enough to make it fire. Then, neuron $i$ fires, and besides, due to the refractory period, the sum of the inhibitory signals that arrive delayed to neuron $i$ does not change its potential $V_i\simeq 0$.

\vspace{1ex}
\noindent 2) The same excitatory and inhibitory signals that in case 1) arrive to neuron $i$, but in the reverse order. Then $V_i$ initially decreases and when the positive excitatory signals arrive delayed to the neuron $i$ they are not enough to make it fire.

\vspace{1ex}
\noindent When the interactions are instantaneous, it is possible that excitatory and inhibitory signals arrive to neuron $i$ at the same time, since the time interval $\tau$ while the neurons are interacting is collapsed to zero. The previous example shows that the algebraic sum of the excitations and the inhibitions received during the time interval $\tau$ is not necessarily a realistic criterion to decide if the neuron $i$ fires or not. Hypothesis (H2) is an alternative criterion, which coincides with the one of the algebraic sum in most situations, but assumes that the positive interactions act faster than the negative ones when an indetermination exists. It can also be seen as a way to take into account that excitatory synapsis are more frequent than inhibitory synapsis in some part of the nervous system \cite{MEA01}.

\subsection{The Poincar\'e return map}\label{RETURNMAP}

In order to analyze the global dynamics of the IF neural network, we reduce it to an equivalent discrete time dynamical system, namely a Poincar\'e return map. In this section, we introduce all the important notions we use in the sequel of the paper. Among then we define a Poincar\'e section in the phase space, we compute the waiting time before the spontaneous firing of the network, we explain in details the rules of the firing regime, we introduce a partition of the Poincar\'e section which atoms are the sets of the initial conditions leading the same neurons to fire in the firing regime, and we derive a Poincar\'e return map of the network.

\vspace{1ex}
\noindent{\bf Poincar\'e section $\Sigma$:} Since the potential of a neuron is always larger than $\alpha$ and always smaller than $\theta$, the states of the
network always belong to the $n$-dimensional space $Q={[\alpha, \theta]}^n$. By definition of the model, the
network never stops to emit spikes (since $\beta>\theta$). It exists then arbitrarily large times such that the
potential of a neuron is reset to zero. In other words, any solution of the model returns infinitely many times to
the set:
\begin{equation}\label{PARTITIONBJ}
\Sigma=\bigcup_{j=1}^{n}{\hat \Sigma}_j \quad\mbox{ where }\quad
{\hat \Sigma}_j=\{{\bf V}\in Q\ :\ V_j=0\}.
\end{equation}
The set $\Sigma$ is the Poincar\'{e} section that we will consider. The topology we use is the one induced by the
embedding $\Sigma\subset\R^n$. Specifically, we consider in $\Sigma$ the metric derived from the supremun norm of $
\R^n$, denoted $\|\cdot\|$ in this paper and defined by $\|\V\|=\max\limits_{i\in I}|V_i|$.

\vspace{1ex}
To sum up the principal steps of the construction of the return map, let us follow an orbit of the network. Suppose the initial state of the network is $\V\in\Sigma$. If $\V$ is such that $V_i<\theta$ for all $i\in\{1,\dots,n\}$, then the network is in the sub-threshold regime and there is a waiting time ${\bar t}(\V)>0$ before a neuron  reaches the threshold potential. At the instant ${\bar t}(\V)$ the network enters in the firing regime, and a set $J(\V)$ of neurons emit some spikes. The potential of these neurons is then reset to $0$ and the states of the other neurons that did not spike during the firing regime, because excitations were not enough, is updated according to the interactions received. The network is back in the sub-threshold regime in a point $\rho(\V)$ of $\Sigma$, which is the value at point $\V$ of the return map $\rho$ we want to construct. The formula of the return map is the same for all $\V$ sharing a same $J(\V)$, this why we will introduce a partition $\mathcal{P}$ of $\Sigma$ which atoms are precisely the points with a same $J(\V)$. Now we detailed the computation of ${\bar t}(\V)>0$, $J(\V)$, $\mathcal{P}$ and $\rho$.

\vspace{1ex}
\noindent {\bf Waiting time ${\bar t}({\bf V})$:} Solving the system (\ref{IF}) leads to the time $t$ map $\phi^t=(\phi^t_1,\dots,\phi^t_n)$ where for each $i\in I$ and $t\in\R$
\begin{equation}\label{FLOW}
\phi^t_i({\bf V})=(V_i-\beta)e^{-\gamma t} + \beta\quad \forall\,{\bf V}\in\R^n.
\end{equation}
Thus, if at time $t=0$ the network is in the state ${\bf V}\in \Sigma$, it enters in the firing regime at time:
\begin{equation}\label{WTIME}
{\bar t}({\bf V}):=\min\limits_{i\in\{1,\dots,n\}}t_i({\bf V}) \quad
\mbox{where}\quad
 t_i({\bf V}):=\inf\{t\geq 0\ :\ \phi_i^t({\bf V})\geq\theta\}.
\end{equation}
Note that ${\bar t}({\bf V})$ is the value of $t_0$ of formula (\ref{FIRE}) when the initial state of the network is $\V$. In the particular case where $\V\in\Sigma$ is such that $V_i=\theta$ for some $i$, the waiting time is equal to $0$, if not it is positive. Note also that if ${\bar t}({\bf V})=t_i(\V)$ the expression of the time $t$ map a time ${\bar t}({\bf V})$ is given by
\begin{equation}\label{PHITV}
\phi_i^{{\bar t}({\bf V})}(\V)=\theta \quad\text{and}\quad \phi_k^{{\bar t}({\bf V})}(\V)=\beta-\frac{(\beta-V_k)(\beta-\theta)}{(\beta-V_i)} \quad\forall\,k\neq i.
\end{equation}
We will often use this expression latter on.

\vspace{1ex}
\noindent {\bf Firing regime $t={\bar t}({\bf V})$:} By definition (\ref{WTIME}), at time ${\bar t}({\bf V})$ some component of the vector $\phi^{{\bar t}({\bf V})}({\bf V})$ are equal to $\theta$; those corresponding to the neurons which potential reaches the threshold in a smaller time than the potential of all the other neurons. We need to characterize this set of neurons that fire {\it spontaneously} because they are part of the set $J$ of formula (\ref{FIRE}). It depends of course of the value of $\V$ (as well as $J$). This is why we introduce the following cover of $\Sigma$:

\begin{equation}\label{PARTITIONBI}
\Sigma = \bigcup _{i=1}^n \Sigma_i \quad \mbox{ where } \quad
\Sigma_i =\{{\bf V}\in \Sigma\ :\ \overline{t}({\bf V})= t_i({\bf
V})\}.
\end{equation}
A set $\Sigma_i$ of this cover, is the set of the initial states in $\Sigma$ such that
the  neuron $i$ fires spontaneously after the waiting time $\overline t(\V)$, that is,  if $\V\in\Sigma_i$ then $\phi_i^{{\bar t}({\bf V})}({\bf V})=\theta$.
Since several neurons may reach $\theta$ at the same time, these sets are not pairwise disjoint (and thus forms a cover instead of a partition of $\Sigma$), but they have pairwise disjoint interiors. We can states any way
that the neurons that fires spontaneously at time $\overline t(\V)$ are those of the set of indexes:
\[
J_0({\bf V})=\{i\in I\,:\, {\bf V}\in \Sigma_i\}.
\]

Now, the firing of the neurons of $J_0({\bf V})$ may instantaneously excite other neurons, which may also fire in turn at the same instant, and excite other neurons or not. One more time, we need to characterize all the neurons participating in this instantaneous \em avalanche \em process, i.e. the set $J$ of formula (\ref{FIRE}), that
we denote until now $J(\V)$. We can determine $J(\V)$ introducing a recursive sequence of sets of indexes
$\{J_m(\V)\}_{m \geq 0}$, where each $J_m(\V)$ is the set of neurons that have fired once until the step $m$ of the avalanche process. Taking into account the temporary inertia (the refractory period) of real neurons after firing, and considering Hypothesis (H2), this sequence must obey the following induction rule for all $m\geq 1$:
\begin{equation}\label{AVAL}
J_{m}({\bf V})=J_{m-1}({\bf V})\cup\{k\in I\setminus J_{m-1}({\bf V})\,: \, \phi_k^{\overline{t}({\bf V})}({\bf
V}) +\sum_{i\in J_{m-1}({\bf V})\,:\, H_{ik}>0} H_{ik}\geq\theta\}.
\end{equation}
As an example, $J_{1}({\bf V})$ contains the neurons of $J_0(\V)$ and possibly additional neurons that fire because of some excitatory interactions with the neurons of $J_0(\V)$. Note that this additional neurons, which are those of the second set of the union that defines $J_1({\bf V})$, cannot be neurons of $J_0(\V)$. In our definition (\ref{AVAL}), they are not considered as a possible receptors of spikes, in order to take into account of a \em refractory phenomenon \em in the model. Indeed, a set $J_m(\V)$ is the disjoint union of the neurons of $J_{m-1}(\V)$ with \em new \em neurons firing by interactions with those of $J_{m-1}(\V)$.

Since the set $I$ is finite, there exists a $m_0\geq 0$, such that no new neurons are incremented to $J_{m_0+1}(\V)$ with respect to $J_{m_0}(\V)$, either because $J_{m_0-1}(\V)$ is the whole all set $I$, or because the sum of the excitatory interactions of the neurons of $J_{m_0-1}(\V)$ is not enough to make fire some new neurons. For such a $m_0$ we have $J_{m_0}(\V)=J_{m_0-1}(\V)$
and we can conclude that \em the set of the neurons that emit a spike at time ${\bar t}({\bf V})$ \em can be written as:
\begin{equation}\label{JAVAL}
J({\bf V})=\bigcup_{m\in\N} J_{m}({\bf V})=J_{m_0}(\V).
\end{equation}
If there is no excitatory neurons in $J_0({\bf V})$, then $J_m(\V)=J_0(\V)$ for all $m\in\N$ and $J(\V)=J_0(\V)$. If $J_0({\bf V})$ contains excitatory neurons, then $J(\V)$ may contain more neurons than $J_0(\V)$.

\vspace{1ex}
\noindent{\bf Partition $\mathcal{P}$ of $\Sigma$:} As shown in the previous construction, to each initial state of the network $\V\in\Sigma$ it is associated the set $J(\V)$ of the neurons that fire at time ${\bar t}(\V)$. So we can consider the function $J(\cdot)$ of $\Sigma$ into the set $P(I)$ of all the nonempty subsets of $I$ and use its pre-images to get a partition $\mathcal{P}$ of $\Sigma$:
\begin{equation}\label{PARTITION}
\mathcal{P}={\{\Sigma_{J}\}}_{J\in P(I)}\quad\mbox{ where }\quad
\Sigma_J=\{{\bf V}\in \Sigma \,:\, J({\bf V})=J\}.
\end{equation}
Given a set of neurons $J\in P(I)$, the set $\Sigma_J$ is the set of all the initial states in $\Sigma$ such that the neurons that fire after the waiting time $\overline t(\V)$ are exactly those of $J$. For example, if $\V\in\Sigma_{\{i\}}$, then at time $\overline t(\V)$, only the neuron $i$ fires (which makes a difference with the set $\Sigma_i$ of the cover (\ref{PARTITIONBI})). Note that ${\bar t}(\V)$ is not necessarily the same for each $\V$ in a same $\Sigma_J$. It is straightforward to check that $\mathcal{P}$ forms a partition of $\Sigma$: on one hand, to each $\V\in\Sigma$ it is associated some $J(\V)$ and thus some $\Sigma_J$, and on the other hand, if $\V\in\Sigma_J\cap\Sigma_{J'}\neq\emptyset$ then by definition $J=J(\V)=J'$ and $\Sigma_J=\Sigma_{J'}$. According to the values of the parameters of the model $H_{ji},\gamma, \beta$ and $\alpha$, some $\Sigma_J$ may be empty. Nevertheless, there is always at least one nonempty  set $\Sigma_J$ in the partition ${\mathcal P}$.

\vspace{1ex}
Now we can give the formula of a return map in $\Sigma$:

\begin{Proposition} Under Hypothesis (H1) and Hypothesis (H2), the map $\rho:\Sigma\to\Sigma$ which components $\rho_1\dots,\rho_n$ are defined in each atom $\Sigma_J$ of the partition $\mathcal{P}$ of $\Sigma$ by:
\begin{equation}\label{RETURN}
\rho_i({\bf V})= 0 \quad\mbox{if}\quad i\in J \quad\mbox{and}\quad \rho_i({\bf V})=\max\{\alpha,\,\phi_i^{{\bar
t}({\bf V})}({\bf V})+\sum\limits_{j\in J} H_{ji}\} \quad\mbox{if}\quad i\notin J
\end{equation}
is a return map in $\Sigma$ of the model.
\end{Proposition}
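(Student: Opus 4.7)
The plan is to verify two things: (i) for every $\V\in\Sigma$, the right-hand side of (\ref{RETURN}) produces a well-defined vector that again belongs to $\Sigma$; and (ii) this vector coincides with the state of the network at the instant just after the firing event triggered by $\V$ in the continuous-time dynamics (\ref{IF})--(\ref{FIRE}). The setup carried out in Section \ref{RETURNMAP} does most of the work, and the proof essentially reduces to checking these two consistencies.

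First I would verify that every ingredient of the formula is well-defined: for each $\V\in\Sigma$, the continuous monotone growth of $\phi^t(\V)$ towards $\beta>\theta$ ensures that each $t_i(\V)$ in (\ref{WTIME}) is finite and attained, so $\bar t(\V)$ is a well-defined non-negative real number; the set $J_0(\V)$ is non-empty since $\bar t(\V)=t_i(\V)$ for at least one $i$; and the avalanche sequence $\{J_m(\V)\}$ defined in (\ref{AVAL}) is non-decreasing in the finite power set $P(I)$, hence stabilizes in at most $n$ steps and yields a unique $J(\V)$, placing $\V$ in a single atom $\Sigma_J$ of the partition $\mathcal{P}$.

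Next I would prove that $\rho(\V)\in\Sigma$. Since $J(\V)\neq\emptyset$, there exists $j\in J(\V)$ with $\rho_j(\V)=0$, so it suffices to show $\rho(\V)\in Q=[\alpha,\theta]^n$. The lower bound $\rho_i(\V)\geq\alpha$ is built into the maximum; the interesting point is the upper bound $\rho_i(\V)<\theta$. For $i\in J(\V)$ it is trivial. For $i\notin J(\V)$, the termination of the avalanche at step $m_0$ means that the threshold condition in (\ref{AVAL}) fails for $k=i$ at step $m_0+1$, i.e.
\[
\phi_i^{\bar t(\V)}(\V)+\sum_{j\in J(\V),\,H_{ji}>0}H_{ji}<\theta.
\]
Adding the remaining non-positive contributions only decreases the left-hand side, so $\phi_i^{\bar t(\V)}(\V)+\sum_{j\in J(\V)}H_{ji}<\theta$, and hence $\rho_i(\V)<\theta$.

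Finally I would match the formula with the continuous-time model. Equation (\ref{IF}) brings $\V$ to $\phi^{\bar t(\V)}(\V)$ during the sub-threshold regime, and the recursion (\ref{AVAL}) is the formal restatement of the instantaneous avalanche described after (\ref{FIRE}): Hypothesis (H2) resolves the indeterminacy when a neuron receives excitatory and inhibitory signals simultaneously, while the constraint $k\in I\setminus J_{m-1}(\V)$ encodes the refractory phenomenon that forbids a neuron from firing twice in the same avalanche. Applying the reset rule of (\ref{FIRE}) to the neurons of $J(\V)$ yields the first case of (\ref{RETURN}), and applying the update rule together with the lower bound imposed by (H1) yields the second case. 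I expect the only real subtlety to be the strict inequality used for the upper bound, which depends on carefully distinguishing positive and negative synaptic contributions between the avalanche criterion (\ref{AVAL}) and the update formula (\ref{RETURN}).
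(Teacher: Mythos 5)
Your proposal is correct and follows essentially the same route as the paper's proof: follow the flow $\phi^t$ up to the firing time $\bar t(\V)$, apply the reset/update rules of (\ref{FIRE}) together with the clipping at $\alpha$ from (H1), and conclude that the resulting point lies in $\Sigma$ because the neurons of $J(\V)$ are reset to zero. Your additional checks (finiteness of $\bar t(\V)$, stabilization of the avalanche, and the strict bound $\rho_i(\V)<\theta$ obtained from the failure of the threshold condition in (\ref{AVAL}) at the terminal step) are correct and simply make explicit what the paper leaves implicit in the construction preceding the proposition.
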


\begin{proof} Take $\V\in\Sigma$, and let $J\in P(I)$ such that $\V\in\Sigma_J$. From (\ref{FLOW}) and (\ref{WTIME}) we deduce that the orbit of $\V$ under the dynamics of the model satisfies:
\begin{equation}\label{RECONS}
\V(t)=\phi^t({\bf V}) \quad\forall\, t\in[0,{\bar t}({\bf V}))\quad\text{and}\quad
\lim_{t\uparrow {\bar t}({\bf V})}\V(t)=\phi^{{\bar t}({\bf V})}({\bf V}).
\end{equation}
At time ${\bar t}(\V)$ the network enter in the firing regime and the neurons that fires are those of $J$, since $\V\in\Sigma_J$. According to (\ref{FIRE}), the potential of the neurons of $J$ is reset and the potential of the other neurons suffers excitatory and inhibitory interactions from those of $J$. Together with Hypothesis (H1) it implies:
\begin{equation*}
\lim_{t\downarrow {\bar t}({\bf V})} V_i(t)=0 \quad \mbox{if}\quad i\in J \quad\mbox{and} \quad \lim_{t\downarrow {\bar t}({\bf V})} V_i(t)=\max\{\alpha, \phi_i^{{\bar t}({\bf V})}({\bf V}) + \sum\limits_{j\in J}H_{ji}\} \quad\mbox{if}\quad i\notin J.
\end{equation*}
Thus, $\rho(\V):=\lim_{t\downarrow {\bar t}({\bf V})}\V(t)$ is a point of $\Sigma$, which is the new state of the network when it comes back in the sub-threshold regime. \qed
\end{proof}

\vspace{1ex}
An orbits $\{\rho^n(\V)\}_{\{n\in\N\}}$ of the return map gives the states of the network immediately after each spike (or simultaneous group of spikes). However, the entire orbit of the network can be reconstructed from the orbit by the return map using (\ref{RECONS}), and all the properties of the network can be deduced from those of the return map. In particular, the network has a periodic orbit if and only if the return map also has.

The return map does not satisfy standard hypothesis of the dynamical systems theory, such as differentiability, or continuity in its entire domain. Actually, $\rho$ is continuous in the interior of each set $\Sigma_J$ of the partition ${\mathcal P}$, but not necessarily in their union $\Sigma$. At a point in the boundary $\partial\Sigma_{J}$ of a set $\Sigma_J$, generically the return map is  not continuous, since a small perturbation may belong to the interior of $\Sigma_J$ or to another set $\Sigma_{J'}$ and thus it can change the set of the firing neurons. A detailed study of these properties is given in Section \ref{subsectionPartitionSubc}.

In general, it is not trivial to characterize the atoms of $\mathcal{P}$ because they depend strongly on the interactions. However, if a network is completely inhibitory, when a neuron fires, it does it spontaneously. This makes easier the computation of $\mathcal{P}$, since it allows to replace $J(\V)$ by $J_0(\V)$ in the definition (\ref{PARTITION}), and this last set does not depend on the interactions but only on the  characteristics of the sub-threshold regime. For such networks, it is then possible to show that the boundary of $\mathcal{P}$  are the sets $\Sigma_{J}$ such that $J$ contains more than one neuron. It follows that the discontinuity points of the return map are the initial states leading two neurons or more to fire together spontaneously.

\section{Contraction properties of the return map}\label{SECCIONCONTRACPROP}

In several mathematical studies of the global dynamics of IF neural networks, the considered model  has the property to be {\it piecewise contractive} in the {\it whole} phase space (see Definition \ref{CONTRACTIVEZONE}). This property reflects the presence of dissipation in the networks. It is introduced in the model to that aim \cite{C08}, or it is a consequence of the absence of excitatory interactions \cite{C10, CRB} or of the predomimance of the inhibitory interactions \cite{J02}.
In our case, where none of these hypothesis is assumed, it is not a property of the return map that holds  generically in the space of the parameters of the system. Indeed, in Theorem \ref{SNOCONT}, we prove that there exist open sets in the space of the parameters such that the return map is not piecewise contractive in a subregion of the phase space. This result holds for any metric derived from a norm and in this sense is an inherent property of the system.

Nevertheless, we also show in Proposition \ref{CONTRACTIVEZONE} that it always (co)exists a subregion of the phase space where the return map is piecewise contractive. Moreover, the size of this region increases when the external current $I_{ext}$ decreases.
By Theorem \ref{SNOCONT}, a contractive region cannot coincide with the whole phase space for any values of the interactions. However, we show in Theorem \ref{TMETRIC} that if the interactions are not too small, or equivalently the external current is not too strong, then it is possible to find a metric for which the return map is piecewise contracting in the whole phase space, provided the network has not only excitatory neurons.




\begin{Definition}\label{CONTRACTIVEZONEDEF} \em We say that a subset $\mathcal{C}$
of the Poincar\'{e} section $\Sigma$ is a contractive zone for the return map $\rho$,
if there exist a finite partition $\mathcal{P}_c$ of $\mathcal{C}$, a constant $0<\lambda_c<1$, and
a norm $\|\cdot\|_c$ such that
\begin{equation}\label{INEGALITECONTRACT}
\|\rho({\bf V})-\rho({\bf W})\|_c\leq\lambda_c\|{\bf
V}-{\bf W}\|_c
\end{equation}
for all ${\bf V}$ and ${\bf W}$ in a same piece of $\mathcal{P}_c$. If there exists a forward invariant  contractive zone, that is $\rho(\mathcal{C})\subset\mathcal{C}$, then $\rho$ is said to be piecewise
contractive in $\mathcal{C}$. \em
\end{Definition}



\subsection{Expansion and contraction in the Poincar\'e section}\label{subsectionSNOCONT}

The question we address now, is the existence of a norm
$\|\cdot\|_c$ such that the whole Poincar\'e section $\Sigma$
would be a contractive zone with respect to the natural partition
$\mathcal{P}$ defined by the equality (\ref{PARTITION}).
The following theorem shows that for some networks such a norm does not exist.

\begin{Theorem}\label{SNOCONT} Under the hypothesis (H1) and  (H2), there exists an open region of the
values of the interactions such that, for any  norm $\|\cdot\|_c$,
the Poincar\'{e} section $\Sigma$ is not a contractive zone with respect to
the natural partition  $\mathcal{P}$.
\end{Theorem}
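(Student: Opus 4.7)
The strategy is to exhibit a periodic point of $\rho$ whose linearized multiplier has modulus strictly greater than one, and to observe that this is a norm-independent obstruction to being a contractive zone for the partition $\mathcal{P}$. Indeed, if $\|\cdot\|_c$ and $\lambda_c<1$ realized the contraction in Definition~\ref{CONTRACTIVEZONEDEF}, then iterating the Lipschitz estimate along the orbit of a $k$-periodic point $\V^{\ast}$ lying, together with its $k-1$ iterates, in the interiors of atoms of $\mathcal{P}$, would yield $\|D\rho^{k}(\V^{\ast})\|_{c}\leq \lambda_c^{k}<1$; the spectral radius of $D\rho^{k}(\V^{\ast})$ would then be strictly less than one, and any periodic point with spectral radius larger than one therefore rules out all such norms.

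To produce such a point I take $n=2$, a sufficiently negative lower bound $\alpha$, and symmetric excitatory interactions $H_{12}=H_{21}=H>0$ with $H$ small enough that no avalanche occurs. Under these assumptions, the atom $\Sigma_{\{2\}}$ coincides (up to boundary) with the piece $\hat{\Sigma}_{1}=\{V_{1}=0,\ V_{2}\in(0,\theta)\}$, the atom $\Sigma_{\{1\}}$ with $\hat{\Sigma}_{2}$, and formulas (\ref{PHITV}) and (\ref{RETURN}) give on $\Sigma_{\{2\}}$ the simple expression
\[
\rho(0,V_{2})=\Bigl(\beta-\tfrac{\beta(\beta-\theta)}{\beta-V_{2}}+H,\ 0\Bigr),
\]
with the symmetric formula on the other atom. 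A symmetric $2$-periodic orbit $\V^{\ast}=(0,V^{\ast})\mapsto(V^{\ast},0)\mapsto(0,V^{\ast})$ then exists: setting $u=\beta-V^{\ast}$, the periodicity condition reduces to the quadratic $u^{2}+Hu=\beta(\beta-\theta)$, whose unique positive solution lies in $(\beta-\theta,\beta)$ for $H$ in an appropriate range.

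Differentiating one gets $\tfrac{d\rho_{1}}{dV_{2}}(0,V^{\ast})=-\beta(\beta-\theta)/u^{2}$, so the multiplier of $\rho^{2}$ at $\V^{\ast}$ equals $[\beta(\beta-\theta)/u^{2}]^{2}$. The quadratic identity gives $u^{2}=\beta(\beta-\theta)-Hu<\beta(\beta-\theta)$ since $Hu>0$, hence this multiplier is strictly greater than one. Concretely, writing $c_{i}=\|e_{i}\|_{c}$ for the $\|\cdot\|_{c}$-length of the canonical basis vectors, the restriction of a hypothetical contractive estimate to each of the two one-dimensional pieces of $\Sigma$ gives $|\rho_{1}'(V^{\ast})|\leq\lambda_{c}\,c_{2}/c_{1}$ and $|\rho_{2}'(V^{\ast})|\leq\lambda_{c}\,c_{1}/c_{2}$, so the product of derivatives is bounded by $\lambda_{c}^{2}<1$, contradicting the computation.

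Finally, both the existence of the $2$-periodic orbit in the interiors of atoms and the strict inequality on its multiplier are open conditions on the entries of the interaction matrix $(H_{ji})$, so the construction persists on an open neighborhood of $(H,H)$ in the space of interactions, yielding the required open region. The main technical burden is administrative: one has to check that throughout this neighborhood the orbit triggers no avalanche ($J=J_{0}$ at each iterate), avoids the $\alpha$-barrier, and stays in the smooth interior of the relevant atom, so that the derivative computation genuinely realizes the linearization of $\rho^{2}$ at a $C^{1}$ fixed point.
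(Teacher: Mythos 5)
Your core mechanism is sound and is in fact the same one the paper exploits: a repelling period-two structure confined to the one-dimensional coordinate pieces of $\Sigma$, combined with the observation that any norm restricted to a line through the origin is a scalar multiple of the coordinate modulus, so the unknown scale factors ($c_1,c_2$ in your notation, $\mu_i$ in the paper's) cancel over the cycle and an expansion of the product of one-dimensional difference quotients contradicts $\lambda_c^2<1$. Your period-two point is exactly the orbit of Proposition \ref{NOGLOBALSYNCHRO}, and the paper's own remark acknowledges that its proof of Theorem \ref{SNOCONT} rests on such a repulsive period-two orbit. The differences are that the paper works with finite differences of arbitrary pairs in the segments $\Gamma_i$ (Lemma \ref{LEMDILAT2}) rather than a derivative at a single periodic point, and that its conditions (O1)--(O3) plus Lemma \ref{PARAM} make the construction work for a network of arbitrary size $n$, with all neurons except one forced to fire and reset so that $\rho^2$ returns to the segment $G_i$. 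Your derivative formulation is cleaner for $n=2$; note only that the opening ``spectral radius of $D\rho^k$'' framing should be discarded in favour of your concrete $c_1,c_2$ computation, since $\rho$ lives on a union of faces rather than an open subset of $\R^n$, and the concrete version is the one that is actually airtight.

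The genuine shortfall is scope: as written the argument only proves the theorem for two-neuron networks, whereas the statement (and the paper's proof) concerns the $n$-neuron model for arbitrary $n$. For $n>2$ the two-dimensional picture does not persist by itself: after the firing of neuron $i$ the remaining $n-2$ neurons generically neither reset nor stay at zero, so $\rho^2$ does not return to a coordinate segment and the norm-cancellation trick breaks down. Recovering it is precisely the content of the paper's conditions (O1)--(O3) and Lemma \ref{PARAM}, which force every neuron except $j$ to fire in the avalanche triggered by $i$ (and vice versa), confining the second iterate to $G_i$. So either restrict the claim to $n=2$ or add this ingredient. Two smaller points: your assertion that $\Sigma_{\{2\}}$ coincides, up to boundary, with $\{V_1=0,\ V_2\in(0,\theta)\}$ for small $H$ is false --- for any $H>0$ the avalanche is triggered when $V_2$ is close to $0$, because the waiting time is then long and the other neuron's potential approaches $\theta$; this is harmless since you only need a neighbourhood of the periodic point inside the atom, where the no-avalanche inequality is strict, but it should be stated that way. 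Finally, the openness claim is fine, but it is worth saying explicitly that it follows from the implicit function theorem applied to the hyperbolic fixed point of $\rho^2$ restricted to the segment, together with the strictness of the no-avalanche and $\rho_k>\alpha$ inequalities along the orbit.
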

In order to construct a subregion of the Poincar\'e section where the return map fails to be piecewise contractive, let us define for all $i\neq j\in I$ the following sets:
\[
\Gamma_{i}:=\{\V\in \Sigma\ :\ c^* < V_i< \theta,\ V_k=0\ \forall\,k\neq i\}\quad\text{where}\quad c^{\ast}:=\beta-\sqrt{\beta(\beta-\theta)}.
\]
Note that for any $0<\theta<\beta$ we have $\beta-\theta<\sqrt{\beta(\beta-\theta)}<\beta-\theta/2$
which implies that  $\theta/2 < c^* < \theta$. So, a set $\Gamma_i$ consists of the initial states of the network such that all the neurons have their potential equal to zero, except the neuron $i$, whose potential is bigger than the quantity $c^\ast>\theta/2$ and smaller than $\theta$. Therefore, for all $\V\in\Gamma_i$ the neuron $i$ reaches the threshold before the other ones, that is ${\bar t}(\V)=t_i(\V)$, and is the neuron that triggers the firing regime.

\vspace{1ex}
Now we state and prove the following lemma establishing a key property to prove Theorem \ref{SNOCONT}.

\begin{Lemma}\label{LEMDILAT2} Let $i\in I$ and suppose $\Gamma_i\subset\Sigma_J$ for some $J\in P(I)$, then for all
$ \V\neq\W\in\Gamma_i$ such that $\rho(\V)$ and $\rho(\W)$ are bigger that $\alpha$, we have
\[
\quad|\rho_k(\V)-\rho_k(\W)|>|V_i-W_i|\quad\text{if}\quad k\notin J,
\]
and moreover $\|\rho(\V)-\rho(\W)\|>\|\V-\W\|$.

\end{Lemma}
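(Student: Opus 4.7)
The plan is to reduce the lemma to an elementary one-dimensional expansion estimate for the scalar map
\[
u(x):=\beta-\frac{\beta(\beta-\theta)}{\beta-x},\qquad x\in(c^{\ast},\theta).
\]
By expression (\ref{PHITV}) applied to $\V\in\Gamma_i$, where $V_k=0$ for every $k\neq i$, one has $\phi_i^{\bar t(\V)}(\V)=\theta$ and $\phi_k^{\bar t(\V)}(\V)=u(V_i)$. First I would unfold the return map: since $\V\in\Gamma_i\subset\Sigma_J$, and since we are assuming that no component of $\rho(\V)$ or $\rho(\W)$ is clipped at $\alpha$, the formula (\ref{RETURN}) simplifies to $\rho_k(\V)=u(V_i)+\sum_{j\in J}H_{jk}$ for $k\notin J$, so that the synaptic terms cancel in the difference and one gets $\rho_k(\V)-\rho_k(\W)=u(V_i)-u(W_i)$.

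Next I would prove that $u$ is strictly expanding on $(c^\ast,\theta)$. A direct differentiation yields
\[
u'(x)=-\frac{\beta(\beta-\theta)}{(\beta-x)^2}.
\]
The definition $c^\ast=\beta-\sqrt{\beta(\beta-\theta)}$ is tailored precisely so that $(\beta-c^\ast)^2=\beta(\beta-\theta)$, that is, $|u'(c^\ast)|=1$. Since $(\beta-x)^2$ is strictly decreasing on $(-\infty,\beta)$, every $x\in(c^\ast,\theta)$ satisfies $(\beta-x)^2<\beta(\beta-\theta)$ and hence $|u'(x)|>1$ on this interval. Applying the mean value theorem to $u$ on the interval bounded by $V_i$ and $W_i$, which lies entirely in $(c^\ast,\theta)$ by the definition of $\Gamma_i$, produces $|u(V_i)-u(W_i)|>|V_i-W_i|$; combined with the cancellation derived above, this proves the first claim $|\rho_k(\V)-\rho_k(\W)|>|V_i-W_i|$ for every $k\notin J$.

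The norm inequality then follows with essentially no extra work: $\V$ and $\W$ coincide on every coordinate except the $i$-th, so $\|\V-\W\|=|V_i-W_i|$; choosing any $k\notin J$ (the case $I\setminus J\neq\emptyset$ being the only one in which the first conclusion is not vacuous) yields
\[
\|\rho(\V)-\rho(\W)\|\geq|\rho_k(\V)-\rho_k(\W)|>|V_i-W_i|=\|\V-\W\|.
\]
The main (and really only) delicate point is identifying where $|u'|$ exceeds one; this is the raison d'\^etre of the constant $c^\ast$ in the definition of $\Gamma_i$, and once it is recognised, the rest reduces to the cancellation of the constant $\sum_{j\in J}H_{jk}$ in the return map, one mean value theorem, and the observation that $\V,\W\in\Gamma_i$ differ in exactly one coordinate.
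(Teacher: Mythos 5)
Your proof is correct and follows essentially the same route as the paper's: unfold $\rho_k$ via (\ref{PHITV}) with $V_k=0$, observe that the synaptic constants $\sum_{j\in J}H_{jk}$ cancel in the difference, and exploit that the scalar map $x\mapsto\beta-\beta(\beta-\theta)/(\beta-x)$ expands on $(c^{\ast},\theta)$ precisely because $(\beta-c^{\ast})^2=\beta(\beta-\theta)$. The only cosmetic difference is that the paper bounds the exact difference quotient $\beta(\beta-\theta)/\bigl((\beta-V_i)(\beta-W_i)\bigr)>1$ directly, whereas you obtain the same bound through $|u'|>1$ and the mean value theorem.
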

\begin{proof} Let us compute $\rho(\V)$. On one hand, since $\V\in\Sigma_J$, we have $\rho_k(\V)=0$ for all $k\in J$. On the other hand, since $\V\in\Gamma_{i}$, we have $V_i>V_k=0$ for all $k\neq i$ which implies that ${\bar t}(\V)=t_i(\V)$. From the formula of the return map, it follows that for all $k\notin J$
\[
\rho_k(\V)=\phi_k^{t_i(\V)}(\V)+\sum\limits_{j\in J} H_{jk}=\beta-\frac{\beta(\beta-\theta)}{\beta-V_i}+
\sum\limits_{j\in J} H_{jk},
\]
where the second equality is obtained applying (\ref{PHITV}) with $V_k=0$.
The same computation being true for $\rho(\W)$ we deduce that for all $k\notin J$
\begin{align}
|\rho_k({\bf V})-\rho_k({\bf W})| &= \left|\phi_k^{t_i(\V)}(\V) - \phi_k^{t_i(\W)}(\W)\right| \nonumber\\
                                  &= \left|\frac{\beta(\beta-\theta)}{(\beta-W_i)(\beta-V_i)}(W_i-V_i)\right|
      > \frac{\beta(\beta-\theta)}{(\beta-c^{\ast})^2}|V_i-W_i|=|V_i-W_i|.\label{ISIDIF}
\end{align}
Now, $\|\rho(\V)-\rho(\W)\|=\max\{|\rho_k({\bf V})-\rho_k({\bf W})|,\ k\in I\}>|V_i-W_i|=\|\V-\W\|$, which completes the proof. \qed
\end{proof}

\noindent Actually, the hypothesis $\Gamma_i\subset\Sigma_J$ for some $J\in P(I)$ can be replaced by the weaker hypothesis: $\Gamma_i\cap\Sigma_J$ is not empty and is not a singleton, for some $J\in P(I)$. Then, the results of the lemma remain true for all $\V\neq\W\in\Gamma_i\cap\Sigma_J$. This weaker hypothesis always holds as $\Gamma_i$ is infinite and the partition $\mathcal{P}$ of $\Sigma$ is finite. This observation implies that the return map always exhibits expansion in some part of its phase space, independently of the values of the interactions (which determine the sets $\Sigma_J$), provided they are enough to ensure that $\rho(\V)$ and $\rho(\W)$ are bigger than $\alpha$.

\vspace{1ex}
Lemma \ref{LEMDILAT2} is stated for the distance induced by the supremum norm, and a priori, it may exist another norm for which the contraction property holds in the whole phase space, even in the $\Gamma_i$. Now, we are going to show that at least for some networks such a norm cannot exit. Examples of such networks have two excitatory neurons $i\neq j \in I$ verifying simultaneously the three following open conditions (O1), (O2), and (O3) :

\begin{itemize}
\item (O1) The neurons $i$ and $j$ cannot be strongly excited by the sequel of the network:
\[
 \sum\limits_{l\neq s\,:\,H_{ls}>0} H_{ls}<\theta-c^\ast \quad\forall\, s\in\{i,j\},
\]
\item (O2) A spike from $i$ or $j$ to any other neuron of the network produces a strong excitation:
\[
 H_{sk}>\theta  \quad\forall\,s\in\{i,j\} \quad\text{and}\quad k\notin\{i,j\},
\]
\item (O3) The sum of the negative and positive interactions that $i$ and $j$ can receive is positive:
\[
 \sum\limits_{l\neq s} H_{ls}>0 \quad\forall\, s\in\{i,j\}.
\]
\end{itemize}
For sake of simplicity we make the proof of Theorem \ref{SNOCONT} with this set of parameters, but other sets can be chosen. In particular, the condition (O2) can be replaced by a more biologically realistic condition, which does not impose some interactions to be bigger than the threshold potential, see Remark \ref{OTHERSETS}. The important point for our proof to work, is to ensure that $\rho^p(\Gamma_i)$ intersects $\Gamma_i$ for some $p$ and that the dilatation is conserved a least for some couple of points during the first $p^\text{th}$ iterations. The advantage of the proposed set of parameters is that it permits to show intuitively that this property is fulfilled for $p=2$, as shown in the following.

\vspace{1ex}
Let us consider a network verifying (O1) to (O3) and whose state $\V\in\Gamma_i$. Then, by definition of $\Gamma_i$, the neuron $i$ fires spontaneously at time ${\bar t}(\V)=t_i(\V)$. At this instant, by condition (O2), the firing of $i$ induces the simultaneous firing of all the other neurons, expect $j$ which is not sufficiently excited (condition (O1)). Thus, the set of the neurons that fire at time ${\bar t}(\V)$ is $J(\V)=I\setminus\{j\}$ and  it follows that: $\V\in\Sigma_{I\setminus\{j\}}$ and $\rho_k(\V)=0$ for all $k\neq j$. Now, if ${\bar t}(\V)$ is sufficiently large, that is if the initial potential $V_i$ of the neuron $i$ is sufficiently near $c^\ast$, then, just before the firing regime, the potential of the neuron $j$ is near $c^\ast$ (actually it is equal to $c^\ast$ is $V_i=c^\ast$). Thanks to condition (O3), the interactions of the neuron $j$ with the network during the firing regime, help its potential to become larger than $c^\ast$. In other words, $\rho_j(\V)\in(c^\ast,\theta)$ and $\rho(\V)\in\Gamma_j$.
We can conclude that for the considered network  $\rho(\Gamma_i)\cap\Gamma_j\neq\emptyset$. These results are stated in Lemma \ref{PARAM} and their rigorous proof is detailed in appendix \ref{PROOFPARAM}.

\begin{Lemma}\label{PARAM} Let $i\neq j\in I$. For the open region of values of the interactions that satisfy the conditions (O1) to (O3), the set $\Gamma_{i}$ is a subset of $\Sigma_{I\setminus\{j\}}$, the set  $\Gamma_{j}$
is a subset of $\Sigma_{I\setminus\{i\}}$, and there exists $(a,b)\subset (c^\ast,\theta)$ such that
for all $\V\in\Gamma_{i}$ satisfying $V_i\in(a,b)$, we have $\rho(\V)\in \Gamma_{j}$. Moreover, if $\W\neq\V\in\Gamma_{i}$ then $\rho(\V)\neq\rho(\W)$.
\end{Lemma}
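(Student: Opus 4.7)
The plan is to verify the three parts of the statement in the order presented. To show $\Gamma_i\subset\Sigma_{I\setminus\{j\}}$, I would fix $\V\in\Gamma_i$ and trace the avalanche rule (\ref{AVAL}) step by step. Since only $V_i>0$ while $V_k=0$ for $k\neq i$, neuron $i$ alone fires spontaneously, so $J_0(\V)=\{i\}$ and $\bar t(\V)=t_i(\V)$. Formula (\ref{PHITV}) then gives, for $k\neq i$,
$$\phi_k^{\bar t(\V)}(\V)=\beta-\frac{\beta(\beta-\theta)}{\beta-V_i}\in(0,c^\ast)$$
when $V_i\in(c^\ast,\theta)$. Condition (O2) forces every $k\notin\{i,j\}$ into $J_1(\V)$ since $\phi_k+H_{ik}>H_{ik}>\theta$, while condition (O1) applied with $s=j$ yields $\sum_{l\neq j,\,H_{lj}>0}H_{lj}<\theta-c^\ast$, so that $\phi_j+\sum_{l\in J_m,\,H_{lj}>0}H_{lj}<c^\ast+(\theta-c^\ast)=\theta$ at every iteration $m$, keeping $j$ out of the avalanche. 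Hence $J(\V)=I\setminus\{j\}$. The identical argument with the roles of $i$ and $j$ exchanged produces $\Gamma_j\subset\Sigma_{I\setminus\{i\}}$.

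For the image claim, formula (\ref{RETURN}) on $\Gamma_i\subset\Sigma_{I\setminus\{j\}}$ gives $\rho_k(\V)=0$ for all $k\neq j$ and
$$\rho_j(\V)=\max\bigl\{\alpha,\,\phi_j^{\bar t(\V)}(\V)+S\bigr\},\qquad S:=\sum_{l\neq j}H_{lj}.$$
Condition (O3) yields $S>0$, and the mapping $V_i\mapsto\phi_j^{\bar t(\V)}(\V)+S$ is continuous and strictly decreasing on $[c^\ast,\theta]$, taking the boundary values $c^\ast+S$ and $S$. The key quantitative input is the estimate $S<c^\ast$: combining (O1) at $s=j$ with the elementary inequality $c^\ast>\theta/2$ (which comes from $\sqrt{\beta(\beta-\theta)}<\beta-\theta/2$) gives $S\leq\sum_{l\neq j,\,H_{lj}>0}H_{lj}<\theta-c^\ast<c^\ast$. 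The resulting chain $S<c^\ast<c^\ast+S<\theta$ lets the intermediate value theorem furnish a nondegenerate subinterval $(a,b)\subset(c^\ast,\theta)$ on which $\phi_j^{\bar t(\V)}(\V)+S\in(c^\ast,\theta)$. On this subinterval $\phi_j^{\bar t(\V)}(\V)+S>0>\alpha$, so the max is irrelevant and $\rho(\V)$ has all coordinates zero except $\rho_j(\V)\in(c^\ast,\theta)$, i.e. $\rho(\V)\in\Gamma_j$.

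For the injectivity statement, Lemma \ref{LEMDILAT2} applies directly: $\Gamma_i$ is contained in the single atom $\Sigma_{I\setminus\{j\}}$, and for the $\V,\W\in\Gamma_i$ of interest the coordinates of $\rho(\V)$ and $\rho(\W)$ are all strictly larger than $\alpha$ (either zero or lying in $(c^\ast,\theta)$). The lemma then yields $\|\rho(\V)-\rho(\W)\|>\|\V-\W\|>0$ whenever $\V\neq\W$, which gives $\rho(\V)\neq\rho(\W)$.

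I expect the subtlest step to be the quantitative chain $S<\theta-c^\ast<c^\ast$, since it is the only place where the open conditions (O1) and (O3) must be fused with the specific geometry of $c^\ast$. Without the strict inequality $c^\ast>\theta/2$, the decreasing image range $(S,c^\ast+S)$ might fail to cross $c^\ast$ strictly inside $(c^\ast,\theta)$, and the conclusion $\rho(\V)\in\Gamma_j$ would collapse. Everything else is direct bookkeeping from the explicit formulas (\ref{PHITV}) and (\ref{RETURN}) and the induction (\ref{AVAL}).
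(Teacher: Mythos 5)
Your proposal is correct and follows essentially the same route as the paper's proof: the step-by-step avalanche computation with (O1)--(O2) giving $J(\V)=I\setminus\{j\}$ for $\V\in\Gamma_i$, the strictly decreasing map $V_i\mapsto\phi_j^{{\bar t}(\V)}(\V)+S$ with $S=\sum_{l\neq j}H_{lj}$, whose value at $V_i=c^\ast$ lies in $(c^\ast,\theta)$ by (O3) and (O1), and finally injectivity (the paper gets it from strict monotonicity of this map, you invoke Lemma \ref{LEMDILAT2}, which is legitimate since every coordinate of $\rho(\V)$ exceeds $\alpha$ on all of $\Gamma_i$). Two minor remarks: the inequality $S<c^\ast$ that you single out as the key input is not actually needed (what matters is only $c^\ast<c^\ast+S<\theta$, i.e. $0<S<\theta-c^\ast$; if one had $S\geq c^\ast$ the image would stay above $c^\ast$ on all of $(c^\ast,\theta)$ and the conclusion would hold even more easily), and for the injectivity claim over all of $\Gamma_i$ the $j$-th coordinate of $\rho(\V)$ lies in $(S,\,c^\ast+S)$ rather than necessarily in $(c^\ast,\theta)$, but it is still strictly larger than $\alpha$, which is all that Lemma \ref{LEMDILAT2} requires.
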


\begin{proof} See the appendix \ref{PROOFPARAM}. \qed
\end{proof}
According to Lemma \ref{PARAM}, there exist $\V\neq\W\in\Gamma_i$ such that $\rho(\V)\neq\rho(\W)\in\Gamma_j$.
Applying Lemma \ref{LEMDILAT2} to $\rho(\V)$ and $\rho(\W)$ we obtain:
\[
\quad|\rho_i(\rho(\V))-\rho_i(\rho(\W))|>|\rho_j(\V)-\rho_j(\W)|
\]
since $\Gamma_j\subset\Sigma_{I\setminus\{i\}}$. Now, applying once again Lemma \ref{LEMDILAT2}, but this time to $\V$ and $\W$ we obtain:
\[
\quad|\rho_j(\V)-\rho_j(\W)|>|\V_i-\W_i|
\]
since $\Gamma_i\subset\Sigma_{I\setminus\{j\}}$. So we have,
\begin{equation}\label{DILAT2}
|\rho_i^2(\V)-\rho_i^2(\W)|>|V_i-W_i|.
\end{equation}

\begin{proof}{\em of Theorem \ref{SNOCONT}\em} Let us show that (\ref{DILAT2}) is incompatible with
the existence of a norm $\|\cdot\|_c$ such that $\Sigma$ is a contractive zone.
If we suppose such a norm exists, according to Definition \ref{CONTRACTIVEZONEDEF}, for the $\V$ and $\W$ of (\ref{DILAT2}) we have
\begin{equation}\label{CONTRACT2}
\|\rho^{2}(\V)-\rho^{2}(\W)\|_c\leq{\lambda_c}\|\rho(\V)-\rho(\W)\|_c\leq{\lambda}_c^{2}\|\V-\W\|_c<\|\V-\W\|_c.
\end{equation}

Now consider the restriction of the norm $\|\cdot\|_c$ to the set $G_i:=\{0\}^{i-1}\times\R\times\{0\}^{n-i}$, that is the norm $\|\cdot\|_{c,i}$ defined by $\|\U\|_{c,i}:=\|\U\|_c$ for all $\U\in G_i$.
As $\|\cdot\|_{c,i}$ is a norm in a vector space isomorphic to $\R$, there exists $\mu_i>0$ independent of
$\U\in G_i$ such that $\|\U\|_{c,i}=\mu_i|U_i|$. Therefore, for all $\U\in G_i$, we have $\|\U\|_c=\|\U\|_{c,i}=\mu_i|U_i|$.
As $\rho^2(\V)$ and $\rho^2(\W)$ belong to $G_i$ (recall that $\rho(\V),\rho(\W)\in\Gamma_j\subset\Sigma_{I\setminus\{i\}}$) and  as $\V$ and $\W$ belong to $\Gamma_i\subset G_i$, the inequality (\ref{CONTRACT2}) can be written:
\begin{equation}\label{CONTRATH}
\|\rho^{2}(\V)-\rho^{2}(\W)\|_c=\mu_i|\rho_i^{2}(\V)-\rho_i^{2}(\W)|<\mu_i|V_i-W_i|=\|\V-\W\|_c
\end{equation}
which contradicts (\ref{DILAT2}). \qed
\end{proof}

\begin{Remark}\label{OTHERSETS}\em

1) If we consider the sets $\Gamma_{i,d}:=\{\V\in \Sigma\ :\ c^* < V_i< d,\ V_k=0\ \forall\,k\neq i\}$ where $d<\theta$, the condition (O2) can be relaxed, without changing the important lines of the proof of Theorem \ref{SNOCONT}. Indeed, the condition (O2) ensures that when the initial state of the network belongs to $\Gamma_i$ the firing of the neuron $i$ induces the firing of any neuron $k\notin\{i,j\}$. But if $\V$ is in $\Gamma_{i,d}$ and $d$ is near from $c^\ast$, the waiting time before the firing of the neuron $i$ will be large enough for the potential of any neuron $k\neq i$ to be near from $c^\ast$. In the limit $d=c^\ast$, if $\V\in\Gamma_{i,d}$, an excitation $H_{ik}>\theta-c^\ast$ from $i$ to $k$ is enough to make $k$ fire (use  (\ref{PHITV}) with $V_i=c^\ast$ and $V_k=0$). Thus, the condition (O2) can be replaced by  the more realistic condition $H_{sk}>\delta(d)$ for all $k\notin\{i,j\}$ and $s\in\{i,j\}$, where $\delta(\cdot)$ is a decreasing function of $d$ such that $\lim_{d\to c^\ast}\delta(d)=\theta-c^\ast$. This former limit is always smaller that $\theta/2$ and tends to $0$ when $c^\ast$ goes to $\theta$ (that is when $\beta$ goes to $\theta$, which implies that the waiting time tends to infinity).

\vspace{1ex}
\noindent 2) On the other hand, completely different sets of parameters can be used to prove Theorem \ref{SNOCONT} using the same sets $\Gamma_i$ and a similar proof. Nevertheless, in order to be able to compare different norms, and to obtain the final contradiction (\ref{CONTRATH}), the set $\Gamma_i$ must be subsets of $G_i$ and the parameters must ensure the strong restriction $\rho^2(\Gamma_i)\cap G_i\neq\emptyset$. This can be done with various sets of parameters, but excitatory interactions plays an important role to reset the potential of all the neuron excepted one. \em
\end{Remark}



\begin{Remark}\em
1) The result of Theorem \ref{SNOCONT} may seem unexpected, since the non zero components of the return map are just translations applied to the time $t$ map $\phi^t$, which has a uniform negative exponent Lyapunov $- \gamma$. The return map fails to be piecewise contractive because
the waiting time ${\bar t}(\cdot)$ to enter in the firing regime depends on initial state. Therefore, when  the values of the return map at two different points are compared, we evaluate $\phi^t$ with different values of $t$, by mean of the map $\phi^{{\bar t}(\cdot)}(\cdot)$, which is not necessarily contractive. In particular, $\phi^{{\bar t}(\cdot)}(\cdot)$ is expansive in any set $\Gamma_i$, as shown by (\ref{ISIDIF}). This property is transmitted to the return map in any intersection of the sets $\Gamma_i$ with an atom of the partition $\mathcal{P}$, independently from the values of the interactions.

\vspace{1ex}
\noindent 2) The existence of expansion in a \em non trivial \em and \em invariant \em compact part of the phase space is considered as a source of chaos. In general the sets $\Gamma_i$ are not invariant. The role of the conditions (O1) to (O3), is precisely to guaranty the existence of an invariant subset in $\Gamma_i\cup\Gamma_j$. Indeed, for the networks verifying these conditions, it is possible to show that the invariant subset $\bigcap_{m \geq 0} \rho^{-m}(\Gamma_{i} \cup \Gamma_{j})$ is non-empty\footnote{The second part of the proof of Lemma \ref{PARAM} is the first step of the proof of  this statement.}. Unfortunately, it fails to be a chaotic attractor because it is trivial: it is just a periodic repeller (period 2). Nevertheless, it is a sufficient result to prove that the return map
cannot be piecewise contractive in the whole phase space for any value of the parameters, even equipping $\Sigma$ with a different norm. On the other hand, we cannot exclude the possibility of the existence of more complicated regions of the phase space and of the parameters for which there exists a non trivial chaotic set and we hope the proof of Theorem \ref{SNOCONT} gives a strategy to search it.
\em
\end{Remark}

Lemma \ref{LEMDILAT2} establishes the existence of some regions of the Poincar\'e section where the return map expands the distances, the sets $\Gamma_i$. However, there is always a subregion of $\Sigma$ which is a contractive zone, as stated by the following Proposition.


\begin{Proposition}\label{CONTRACTIVEZONE}  For any $c\in[0,\theta]$, let us define the subset $\mathcal{C}_c$ of the Poincar\'e section  by
\begin{equation}\label{FORMCONTRACTIVEZONE}
\mathcal{C}_c:=\{\V\in \Sigma\ :\ \alpha\leq V_i\leq c\ \forall\, i\in I\}.
\end{equation}
The set $\mathcal{C}_0$ is a contractive zone with respect to the partition $\mathcal{P}$. Moreover, if
\[
\beta<\beta_{+}(\alpha) \quad \text{with} \quad \beta_{+}(\alpha):=\frac{1}{2}(\alpha+2\theta+\sqrt{\alpha^2+4\theta^2})
\]
then for any $c\in(0,{\bar c})$ where
\begin{equation}
{\bar c}:=\beta-\frac{1}{2}\left(\beta-\theta+\sqrt{(\beta-\theta)^2 +4(\beta-\theta)(\beta-\alpha)}\right),
\end{equation}
the set $\mathcal{C}_c$ is non empty, and is a contractive zone with respect to the partition $\mathcal{P}$.
\end{Proposition}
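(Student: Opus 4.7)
My plan is to reduce both parts to a uniform Lipschitz estimate for the explicit form of $\rho$ on each piece $\Sigma_J \cap \mathcal{C}_c$. Setting $\mu(\V) := \max_l V_l$, the neuron that fires first is the one realising $\mu(\V)$, and \eqref{PHITV} then gives, uniformly across possible ties, $\phi_k^{\bar t(\V)}(\V) = \beta - (\beta - \theta)(\beta - V_k)/(\beta - \mu(\V))$ for every $k$. Hence on $\Sigma_J \cap \mathcal{C}_c$,
\begin{equation*}
\rho_k(\V) = 0 \text{ if } k \in J, \qquad \rho_k(\V) = \max\bigl\{\alpha,\; h_k(\V) + C_k^J\bigr\} \text{ if } k \notin J,
\end{equation*}
where $h_k(\V) := \beta - (\beta - \theta)(\beta - V_k)/(\beta - \mu(\V))$ and $C_k^J := \sum_{j \in J} H_{jk}$ is constant on $\Sigma_J$. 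Since $x \mapsto \max\{\alpha, x + \text{const}\}$ is $1$-Lipschitz, it is enough to bound the sup-norm Lipschitz constant of $h_k$.

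\textbf{Part 1.} For $\V \in \mathcal{C}_0$ all coordinates are $\leq 0$, but $\V \in \Sigma$ forces some $V_j = 0$, so $\mu(\V) \equiv 0$ on $\mathcal{C}_0$ and the waiting time $\bar t$ is constant there. Substituting $\mu = 0$ gives $h_k(\V) = \theta + \tfrac{\beta - \theta}{\beta} V_k$, affine with slope $\lambda_0 := (\beta - \theta)/\beta \in (0,1)$. Thus $|\rho_k(\V) - \rho_k(\W)| \leq \lambda_0 |V_k - W_k| \leq \lambda_0 \|\V - \W\|$, and taking the max over $k$ yields contraction on every atom of $\mathcal{P}$, with no hypothesis on $\beta$ needed.

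\textbf{Part 2.} Non-emptiness is clear since $(0,\dots,0) \in \mathcal{C}_c$. To bound $h_k$ on $\Sigma_J \cap \mathcal{C}_c$ I would telescope
\begin{equation*}
h_k(\V) - h_k(\W) = \bigl[f(V_k, \mu(\V)) - f(W_k, \mu(\V))\bigr] + \bigl[f(W_k, \mu(\V)) - f(W_k, \mu(\W))\bigr],
\end{equation*}
where $f(u,v) := \beta - (\beta - \theta)(\beta - u)/(\beta - v)$ is smooth on $[\alpha, c]^2$, apply the mean value theorem to each bracket, and use that $\mu$ is $1$-Lipschitz in the sup norm. The elementary sup bounds $|\partial_u f| \leq (\beta - \theta)/(\beta - c)$ and $|\partial_v f| \leq (\beta - \theta)(\beta - \alpha)/(\beta - c)^2$ on $[\alpha, c]^2$ then produce
\begin{equation*}
\lambda_c = \frac{(\beta - \theta)\bigl[(\beta - c) + (\beta - \alpha)\bigr]}{(\beta - c)^2}.
\end{equation*}
Setting $u = \beta - c$, the inequality $\lambda_c < 1$ rearranges to the quadratic $u^2 - (\beta - \theta)u - (\beta - \theta)(\beta - \alpha) > 0$, whose positive root is $u_+ := \tfrac{1}{2}\bigl[(\beta - \theta) + \sqrt{(\beta - \theta)^2 + 4(\beta - \theta)(\beta - \alpha)}\bigr]$, so $\lambda_c < 1 \iff c < \beta - u_+ = \bar c$. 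The condition $\bar c > 0$ squares to $\beta^2 - (\alpha + 2\theta)\beta + \alpha \theta < 0$, whose larger root in $\beta$ is precisely $\beta_+(\alpha) = \tfrac{1}{2}(\alpha + 2\theta + \sqrt{\alpha^2 + 4\theta^2})$.

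\textbf{Main obstacle.} The only non-routine step is the initial reformulation: recognising that the $\Sigma_i$-dependence of the ``first-firing'' neuron can be absorbed into the single $1$-Lipschitz function $\mu(\V)$, so that a uniform bound applies on a whole atom of $\mathcal{P}$ without further refinement, and that the outer $\max\{\alpha,\cdot\}$ and the additive constant $C_k^J$ contribute nothing to the Lipschitz constant. Once this is in place, both the Lipschitz estimate and the admissibility range of $c$ and $\beta$ reduce to standard mean-value bookkeeping and two elementary quadratic computations.
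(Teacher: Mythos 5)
Your proposal is correct and follows essentially the same route as the paper's proof: a uniform Lipschitz estimate on each piece $\Sigma_J\cap\mathcal{C}_c$ that yields exactly the paper's constants $\lambda_0=(\beta-\theta)/\beta$ and $\lambda_c=\frac{\beta-\theta}{\beta-c}\bigl(1+\frac{\beta-\alpha}{\beta-c}\bigr)$, followed by the same two quadratic computations identifying $\bar c$ and $\beta_{+}(\alpha)$ (you should just add the one-line remark that the smaller root $\beta_{-}(\alpha)$ of $\beta^2-(\alpha+2\theta)\beta+\alpha\theta$ is negative, so only $\beta<\beta_{+}(\alpha)$ is binding). Your two packaging devices — writing $\rho_k$ as a $1$-Lipschitz function of $h_k(\V)=f(V_k,\mu(\V))$, which absorbs the paper's case analysis of the cutoff $\max\{\alpha,\cdot\}$, and using that $\mu(\V)=\max_l V_l$ is $1$-Lipschitz, which replaces the paper's two-subcase argument showing $|V_i-W_l|\leq\|\V-\W\|$ — are valid and only streamline the bookkeeping.
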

For any value of the parameters, there is a contractive zone in the Poincar\'e section: the set $\mathcal{C}_0$. Moreover,  if some restrictions are imposed to $\beta$, namely $\theta<\beta<\beta_{+}(\alpha)$, then ${\bar c}>0$, and there exist larger contractive zones: all the sets $\mathcal{C}_c$ with $0<c<{\bar c}$. The nearer $\beta$ is to $\theta$, the larger is ${\bar c}$. In particular, when $\beta$ tends to $\theta$, the whole Poincar\'e section tends to be a contractive zone. On the other hand when $\beta$ tends to $\beta_{+}(\alpha)$ the quantity ${\bar c}$ tends to $0$.

The quantity $\beta_{+}(\alpha)$ is an increasing function of $\alpha$ and therefore
\[
\lim\limits_{\alpha\to-\infty}\beta_{+}(\alpha)<\beta_{+}(\alpha)<\beta_{+}(0),
\quad
\text{that is}\quad \beta_{+}(\alpha)\in(\theta,2\theta)\quad \forall\,\alpha<0.
\]
It follows that there always exists some values of $\beta$ satisfying the condition $\theta<\beta<\beta_{+}(\alpha)$, even for $\alpha$ arbitrary small. In particular if $\alpha$ is near to $0$ then
$\beta$ can be chosen near to $2\theta$. However, if $\beta$ is larger than $2\theta$, then the only set $\mathcal{C}_{c}$ which is a contractive zone is $\mathcal{C}_{0}$.

Recall that $\beta$ is proportional to the external current $I_{ext}$ applied to each neuron.
Therefore, the previous analysis reflects the fact that weak currents are more favorable to the presence of contraction (or dissipation) in the model.

\begin{proof}{\em of Proposition \ref{CONTRACTIVEZONE} \em} First, note that $\mathcal{C}_c\neq\emptyset$ if and only if $c\geq 0$, since any point of $\Sigma$ has at least a component equal to $0$. The set $\mathcal{C}_0$ is always non empty, but for a set $\mathcal{C}_c$ with $0<c<{\bar c}$
to be non empty we have to ensure that ${\bar c}>0$. This last condition is satisfied if and only if
\[
2\beta-(\beta-\theta)>\sqrt{(\beta-\theta)^2+4(\beta-\theta)(\beta-\alpha)}.
\]
Since both sides of the inequality are positive, elevating to the square each sides we obtain an equivalent expression, which after simplifications is:
\[
\beta^2-(\alpha+2\theta)\beta+\alpha\theta<0.
\]
This equation is satisfied if and only if $\beta$ verifies:
\[
\beta_{-}(\alpha):=\frac{\alpha+2\theta-\sqrt{\alpha^2+4\theta^2}}{2}<\beta<\frac{\alpha+2\theta+\sqrt{\alpha^2+4\theta^2}}{2}=\beta_{+}(\alpha).
\]
As $\beta_{-}(\cdot)$ is an increasing function of $\alpha$, it follows that $\beta_{-}(\alpha)< \beta_{-}(0)=0$, which is always smaller than $\beta$ (recall that $\beta>\theta>0>\alpha$). We conclude that for all $c\in(0,{\bar c})$ the set $\mathcal{C}_c\neq\emptyset$ if $\beta<\beta_{+}(\alpha)$.

\vspace{1ex}
Now we show that for all $c\in[0,\bar{c})$ there exists $\lambda_{c}<1$ such that for any $J\in {P}(I)$ we have:
\begin{equation}\label{LIP}
\|\rho({\bf V})-\rho({\bf W})\|\leq\lambda_{c}\|{\bf V}-{\bf W}\| \qquad  \forall\ {\bf V},{\bf W}\in
\Sigma_J\cap \mathcal{C}_c.
\end{equation}
Suppose ${\bf V},{\bf W}\in \Sigma_J\cap \mathcal{C}_c$ and let $i,l\in J$ be such that ${\bf V}\in \Sigma_i$ and ${\bf
W}\in \Sigma_l$ (recall (\ref{PARTITIONBI})). Let $k\in I$. If $k\in J$ then by definition of $\rho$
\begin{equation}\label{KINJ}
|\rho_k({\bf V})-\rho_k({\bf W})|=0.
\end{equation}
If $k\notin J$ we have to consider $4$ cases:

\vspace{1ex}
\noindent{\bf Case 1:} If $\rho_k(\V)=\rho_k(\W)=\alpha$ then (\ref{KINJ}) is true.

\vspace{1ex}
\noindent{\bf Case 2:} If $\rho_k(\V)>\alpha$ and $\rho_k(\W)>\alpha$ then, as $\V$ and $\W$ belong to the same set $\Sigma_J$ we have
\[
|\rho_k({\bf V})-\rho_k({\bf W})|  = |\phi_k^{\bar{t}(\V)}(\V)  - \phi_k^{\bar{t}(\W)}(\W)|
\]
Now, $\V\in\Sigma_i$ and $\W\in\Sigma_l$ imply $\overline{t}({\bf V})=t_i({\bf V})$ and $\overline{t}({\bf W})=t_l({\bf W})$. Using (\ref{PHITV}) with we obtain:
\begin{eqnarray*}
|\rho_k({\bf V})-\rho_k({\bf W})| & = &
\left|\frac{(\beta-W_k)(\beta-\theta)}{\beta-W_l}-\frac{(\beta-V_k)(\beta-\theta)}{\beta-V_i}\right|\\
     & = & \left|\frac{\beta-\theta}{\beta-W_l}(V_k - W_k)
     +\frac{(\beta-V_k)(\beta-\theta)}{(\beta-W_l)(\beta-V_i)}(W_l-V_i)\right|
\end{eqnarray*}
If $\V$ and $\W$ belongs to $\mathcal{C}_0$ then necessarily $W_l=V_i=0$. If not, as $W_l$ and $V_i$ are the largest components of $\W$ and $\V$ respectively, all the components of these points
are negative, which is impossible since they belong to $\Sigma.$ Therefore,
\begin{equation}\label{CONTRACTRATEC0}
|\rho_k({\bf V})-\rho_k({\bf W})|  =  \frac{\beta-\theta}{\beta}|V_k - W_k|\leq  \frac{\beta-\theta}{\beta}\|{\bf V}-{\bf W}\| \quad\text{if}\quad \V,\W\in\mathcal{C}_0.
\end{equation}
If $\V$ and $\W$ belongs to $\mathcal{C}_c$ for a $c\in(0,\bar{c})$ then
\[
|\rho_k({\bf V})-\rho_k({\bf W})| \leq \frac{\beta-\theta}{\beta-c}|V_k - W_k|
     +\frac{(\beta-\alpha)(\beta-\theta)}{(\beta-c)^2}|V_i-W_l|.
\]
Suppose $V_i\leq W_l$. As $t_i({\bf V})=\overline{t}({\bf V})$, we have $V_l\leq V_i$, which implies
$|V_i-W_l|\leq |V_l-W_l|$, and
\begin{eqnarray}\label{CONTRACTRATE}
|\rho_k({\bf V})-\rho_k({\bf W})|
     & \leq &\frac{\beta-\theta}{\beta-c}|V_k - W_k|
     +\frac{(\beta-\alpha)(\beta-\theta)}{(\beta-c)^2}|V_l-W_l| \nonumber \\
     & \leq &\frac{\beta-\theta}{\beta-c}\left(1
     +\frac{\beta-\alpha}{\beta-c}\right)\|{\bf V}-{\bf W}\|
     \quad\text{if}\quad \V,\W\in\mathcal{C}_c.
\end{eqnarray}
Suppose $W_l< V_i$. As $t_l({\bf W})=\overline{t}(\W)$, we have $W_l\geq W_i$, which implies
$|V_i-W_l|\leq |V_i-W_i|$, and the inequality ($\ref{CONTRACTRATE}$) follows as well.

\vspace{1ex}
\noindent{\bf Case 3:} If $\rho_k(\V)>\alpha$ and $\rho_k(\W)=\alpha$ then
\begin{eqnarray*}
|\rho_k({\bf V})-\rho_k({\bf W})|
&= &   \phi_k^{\bar{t}(\V)}(\V)+\sum_{j\in J}H_{jk}-\alpha\\
&\leq& \phi_k^{\bar{t}(\V)}(\V) + \sum_{j\in J}H_{jk} - \phi_k^{\bar{t}(\W)}(\W)-\sum_{j\in J}H_{jk}\\
&\leq& \phi_k^{\bar{t}(\V)}(\V)  - \phi_k^{\bar{t}(\W)}(\W)
\end{eqnarray*}
and we obtain (\ref{CONTRACTRATEC0}) and (\ref{CONTRACTRATE}) by the same calculation as in Case $2$.

\vspace{1ex}
\noindent{\bf Case 4:} If $\rho_k(\V)=\alpha$ and $\rho_k(\W)>\alpha$ then, substituting $\V$ for $\W$ and $\W$ for $\V$ in Case 3, we obtain (\ref{CONTRACTRATEC0}) and (\ref{CONTRACTRATE}).

\vspace{1ex}
To sum up, if $\V,\W\in\mathcal{C}_0$, for all $k\in I$ either (\ref{KINJ}) or (\ref{CONTRACTRATEC0}) is true and
\[
\|\rho({\bf V})-\rho({\bf W})\| \leq \lambda_0\|{\bf V}-{\bf
W}\| \quad\mbox{where}\quad
\lambda_0=\frac{\beta-\theta}{\beta}<1.
\]
If $\V,\W\in\mathcal{C}_c$, for all $k\in I$ either (\ref{KINJ}) or (\ref{CONTRACTRATE}) is true and
\[
\|\rho({\bf V})-\rho({\bf W})\| \leq \lambda_c\|{\bf V}-{\bf
W}\| \quad\mbox{where}\quad
\lambda_c=\frac{\beta-\theta}{\beta-c} \left(1
+\frac{\beta-\alpha}{\beta-c}\right).
\]
Using the definition of ${\bar c}$ we obtain $\lambda_{\bar c}=1$. The quantity $\lambda_{c}$ being an increasing function of with $c$, we have $\lambda_c<\lambda_{\bar c}$ for all $c<{\bar c}$, which ends the proof.
\qed
\end{proof}

In this section we have shown that there exists some regions of the phase space where the return map expands the distances (the sets $\Gamma_i$) and other regions where it contracts them (the sets $\mathcal{C}_c$). Moreover, for some values of the interactions, it is not possible to make the return map piecewise contractive in the sets $\Gamma_i$ by changing the working  norm. This proves that the piecewise contractive character of the return map is not a trivial property, and to ensure it in the whole phase space, it is necessary to impose adequate conditions to the parameters of the model. In the forthcoming section we propose such conditions.

\subsection{Parameters of global contraction}  \label{SubsectionH3H4}


Proposition \ref{CONTRACTIVEZONE} gives concrete examples of a regions of the Poincar\'{e} section, denoted $\mathcal{C}_c$ with $c<{\bar c}$, where the return map is contractive. The set $\mathcal{C}_{\bar c}$ is not a contractive zone in the sense of Definition \ref{CONTRACTIVEZONEDEF} because for this set $\lambda_{\bar c}=1$. Nevertheless, it contains all the contractive  zones $\mathcal{C}_c$ with $c<{\bar c}$ and will help the proof of the results of this section. In the sequel we give conditions on the parameter values (Hypothesis (H3) and (H4)) for which we prove that $\mathcal{C}_{\bar c}$ has two important features. First, once an orbit enters into $\mathcal{C}_{\bar c}$, it does not leave it, furthermore, it stays in a contractive zone contained in $\mathcal{C}_{\bar c}$ (Proposition \ref{INVARIANT}). Second, if at least one neuron is inhibitory, then any orbit will finally enter into $\mathcal{C}_{\bar c}$ (Proposition \ref{ATTRACT}). These two important properties of $\mathcal{C}_{\bar c}$ allow to show that after changing adequately the metric in $\Sigma$ and considering a refined partition of $\mathcal{P}$, the return map becomes piecewise contractive in the whole Poincar\'{e} section (Theorem \ref{TMETRIC}). This result holds under hypothesis (H1) to (H4) and if at least one of the neurons is inhibitory. The case for which all the neurons are excitatory, which mainly provokes the synchronization of the whole network, will be studied in Section \ref{SYNCHRO}, without assuming Hypothesis (H1) to (H4).

\vspace{1ex}
Along this section, we suppose that the parameters verify the following two hypothesis:

\vspace{1ex}
\noindent {\bf (H3)} The external current $I_{ext}$ is not too strong, in such a way that $\beta<\beta_{+}(\alpha)$ and the interactions $H_{ji}$ satisfy:
\[
\min_{j\neq i}{|H_{ji}|}> \epsilon \quad\text{where}\quad \epsilon := \frac{1}{2}\left(\sqrt{(\beta-\theta)^2 +
4(\beta - \theta)(\beta-\alpha)}-(\beta-\theta)\right).
\]

\vspace{1ex}
\noindent {\bf (H4)} We assume the Dale's principle: a neuron is either excitatory or inhibitory. In other words, the network does not contain mixed neurons.

\vspace{1ex}
\noindent Hypothesis (H3) and (H4) define the region of the parameters for which we study the global dynamics of
IF neural networks later on, unless otherwise specified. Hypothesis (H3) is an open condition relating the strength of the external current and of the resistance of the membrane (recall that $\beta=RI_{ext}$) with the intensity of the interactions. It first guaranties the existence of a contractive zone larger than $\mathcal{C}_0$ by imposing $\beta<\beta_{+}(\alpha)$ (see Proposition \ref{CONTRACTIVEZONE}). On the other hand, it imposes to the interactions to have a lower bound $\epsilon>0$ (but no upper bound is imposed). This lower bound is an increasing function of $\beta-\theta$. In the extreme case, where $\beta-\theta$ tends to $\beta_{+}(\alpha)-\theta$, the lower bound tends to $\theta$. But, if $\beta-\theta$ tends to $0$ then, to satisfy (H3), the interactions have to be different from $0$, but can be  arbitrarily near to $0$. In other words, given fixed values of the interactions, our description of the dynamics of the network  holds for a range of external currents such that $\beta=RI_{ext}$ is sufficiently near to $\theta$. Or equivalently, given a fixed value of $RI_{ext}$, this description apply to all the networks with sufficiently strong interactions.

\vspace{1ex}
In Section \ref{RETURNMAP} we noted that, for every $\V\in \Sigma$ such that $J_0({\bf V})$ contains only inhibitory neurons, $J(\V)=J_0({\bf V})$. If moreover $\V\in \mathcal{C}_{\bar c}$, the set $J_0(\V)$ has  the
following additional property:
\begin{Lemma}\label{JVAC}  If ${\bf V} \in \mathcal{C}_{\bar c}$ and an excitatory neuron fires spontaneously, that is $J_0({\bf V})$ contains an excitatory neuron, then all the neurons fire together, that is $J({\bf V})=I$.
\end{Lemma}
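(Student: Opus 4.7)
My plan is to reduce the lemma to a single one-step cascade estimate: I show that if one excitatory neuron $j$ fires spontaneously from a state in $\mathcal{C}_{\bar c}$, then at step one of the avalanche every other neuron already receives enough net excitation to cross the threshold, so $J_1(\V)=I$ and hence $J(\V)=I$ by (\ref{JAVAL}).

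The estimate hinges on a short algebraic identity linking the threshold $\bar c$ of Proposition~\ref{CONTRACTIVEZONE} to the interaction floor $\epsilon$ of Hypothesis~(H3). Setting $S:=\sqrt{(\beta-\theta)^2+4(\beta-\theta)(\beta-\alpha)}$, a direct expansion of the definitions gives $\bar c=\theta-\epsilon$ and the quadratic relation $\epsilon(\epsilon+\beta-\theta)=(\beta-\theta)(\beta-\alpha)$, which rearranges to
\[
\frac{(\beta-\alpha)(\beta-\theta)}{\beta-\theta+\epsilon}=\epsilon.
\]
Now pick an excitatory $j\in J_0(\V)$, so $\bar t(\V)=t_j(\V)$ and $V_j\leq\bar c$. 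For every $k\neq j$, formula (\ref{PHITV}) yields
\[
\phi_k^{\bar t(\V)}(\V)=\beta-\frac{(\beta-V_k)(\beta-\theta)}{\beta-V_j}\geq\beta-\frac{(\beta-\alpha)(\beta-\theta)}{\beta-\bar c}=\beta-\epsilon,
\]
where the inequality uses $V_k\geq\alpha$ and $V_j\leq\bar c$, and the last equality combines $\beta-\bar c=\beta-\theta+\epsilon$ with the identity above.

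It then remains to close the cascade. By Dale's principle (H4), the excitatory neuron $j$ satisfies $H_{jk}\geq 0$ for every $k\neq j$, and (H3) sharpens this to $H_{jk}>\epsilon$. Keeping only the (nonnegative) contribution of $j$ in the positive sum of the recursion (\ref{AVAL}), for every $k\in I\setminus J_0(\V)$,
\[
\phi_k^{\bar t(\V)}(\V)+\!\!\sum_{i\in J_0(\V):\,H_{ik}>0}\!\!H_{ik}\geq(\beta-\epsilon)+H_{jk}>\beta>\theta,
\]
so $k\in J_1(\V)$. The only delicate point is the algebraic coincidence $\bar c=\theta-\epsilon$, which is precisely what makes the lower bound $\beta-\epsilon$ on $\phi_k^{\bar t(\V)}(\V)$ dovetail with the lower bound $\epsilon$ on each excitatory weight supplied by (H3); once that identity is in hand, the rest is a direct substitution into the one-step avalanche definition.
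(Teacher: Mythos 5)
Your proof is correct and takes essentially the same route as the paper: bound $\phi_k^{\bar t(\V)}(\V)$ from below via (\ref{PHITV}) using $V_k\geq\alpha$ and $V_j\leq\bar c$, then add one excitatory interaction from $J_0(\V)$, which (H3) bounds below by $\epsilon$, to exceed $\theta$ at the first step of the avalanche, so $J_1(\V)=I$. The only (cosmetic) difference is that you exploit the exact identities $\bar c=\theta-\epsilon$ and $(\beta-\alpha)(\beta-\theta)/(\beta-\theta+\epsilon)=\epsilon$ to get the sharp bound $\beta-\epsilon$, whereas the paper runs an inequality chain ending in $\tfrac12(\beta+\theta)-\epsilon>\theta-\epsilon$.
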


\begin{proof} Suppose ${\bf V} \in  \mathcal{C}_{\bar c}$ and let $i\in J_0(\V) \neq I$.
Let us compute the neurons of $J_1(\V)$. Suppose $k\in I\setminus J_0(\V)$. As ${\bar t}(\V)=t_i({\V})$, by (\ref{PHITV}) we have:
\[
\phi_k^{\overline{t}({\bf V})}({\bf V}) =\beta-\frac{(\beta-V_k)(\beta-\theta)}{\beta-V_i}.
\]
As $\V\in\mathcal{C}_{\bar c}$, we have $\alpha\leq V_k$ and $V_i\leq {\bar c}$ which implies
\begin{align*}
\phi_k^{\overline{t}({\bf V})}({\bf V}) & \geq\beta-\frac{(\beta-\alpha)(\beta-\theta)}{\beta-{\bar c}}\\
 & \geq \beta-\frac{4(\beta-\alpha)(\beta-\theta)}{2(\beta-\theta +\sqrt{(\beta-\theta)^2 + 4(\beta-\theta)(\beta-\alpha)})}\\
 & \geq \beta- \frac{1}{2}\sqrt{(\beta-\theta)^2 + 4(\beta-\theta)(\beta-\alpha)}=\frac{1}{2}(\beta+\theta)-\epsilon>\theta -\epsilon.
\end{align*}
Since $J_0({\bf V})$ contains an excitatory neuron, it follows that
\[
\phi_k^{\overline{t}({\bf V})}({\bf V})+\sum\limits_{j\in J_0({\bf V})\ :\ H_{jk}>0}
H_{jk}>\theta -\epsilon +\min_{j\neq i}{|H_{ji}|}>\theta,
\]
and by definition of $J_1(\V)$, we have that $k\in J_1(\V)$. We deduce that for all $k\in I\setminus J_0(\V)$ we have $k\in J_1(\V)$. Thus, for all $k\in I$ either $k\in J_0(\V)$ or $k\in J_1(\V)$. In both case $k\in J(\V)$.
\qed
\end{proof}

The Lemma states that for the initial states $\V\in\mathcal{C}_{\bar c}$, it is enough that a single excitatory neuron fires to make the whole network firing simultaneously and to provoke its synchronization (for more details
about synchronization, see Section \ref{SYNCHRO}). The reason is that, if $\V\in\mathcal{C}_{\bar c}\cap\Sigma_i$, the potential of the neuron $i$ is small enough for the other neurons getting sufficiently near to the threshold just before $i$ reaches it. In other words, all the neurons get spontaneously near to the threshold in a similar time, i.e. $t_j(\V)\simeq{\bar t}(\V)$ for all $j\in I$ if $\V\in\mathcal{C}_{\bar c}$. Therefore, even a small excitation (but bigger than $\epsilon$) is enough to help all the other neurons to reach the threshold in the avalanche process. Nevertheless, for initial states $\V \not \in \mathcal{C}_{\bar c}$, even if a single neuron may produce the avalanche, it does not necessarily makes all the neurons fire simultaneously.


\vspace{1ex}
Now we apply Lemma \ref{JVAC} to prove the existence of a forward invariant contractive zone in the the Poincar\'e section. We recall, that in such a case $\rho$ is said to be piecewise contractive.

\begin{Proposition}\label{INVARIANT} The set $\Sigma^*:=\overline{\rho(\mathcal{C}_{\bar c})}$ is contained in
$\mathcal{C}_{\bar c}$, is a contractive zone, and is such that $\rho(\Sigma^*)\subset\Sigma^*$.
\end{Proposition}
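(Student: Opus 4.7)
The plan is to upgrade the invariance of $\mathcal{C}_{\bar c}$ to a strictly stronger statement: $\rho(\mathcal{C}_{\bar c})\subset\mathcal{C}_{\bar c-\delta}$ for some $\delta>0$. Once that is in hand, all three conclusions follow by combining with Proposition \ref{CONTRACTIVEZONE}. Finiteness of the index set $I$, together with the strict inequality in (H3), allows us to set $\delta:=\min_{j\neq i}|H_{ji}|-\epsilon>0$, which is the uniform gap that will beat the boundary value $\lambda_{\bar c}=1$.

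To prove this key inclusion, I would take $\V\in\mathcal{C}_{\bar c}$, pick $l\in J_0(\V)$ (so $\bar t(\V)=t_l(\V)$ and $V_l=\max_k V_k$), and split into two cases. If $J_0(\V)$ contains an excitatory neuron, Lemma \ref{JVAC} forces $J(\V)=I$ and $\rho(\V)=\mathbf{0}\in\mathcal{C}_0\subset\mathcal{C}_{\bar c-\delta}$. Otherwise, Dale's principle (H4) together with the avalanche rule ensures $J(\V)=J_0(\V)$ and consists only of inhibitory neurons; then for $i\notin J(\V)$, formula (\ref{PHITV}) with $V_i\leq V_l$ gives $\phi_i^{\bar t(\V)}(\V)\leq\theta$, while each $H_{ji}$ with $j\in J(\V)$ satisfies $H_{ji}\leq-(\epsilon+\delta)$, and since $J(\V)\neq\emptyset$,
\[
\rho_i(\V)\leq\phi_i^{\bar t(\V)}(\V)+\sum_{j\in J(\V)}H_{ji}\leq\theta-\epsilon-\delta=\bar c-\delta,
\]
the identity $\bar c=\theta-\epsilon$ being a direct computation from the definitions of $\bar c$ and $\epsilon$. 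The remaining coordinates satisfy $\rho_i(\V)=0\leq\bar c-\delta$ when $i\in J(\V)$, and the lower bound $\rho_i(\V)\geq\alpha$ is built into the return map, so $\rho(\V)\in\mathcal{C}_{\bar c-\delta}$.

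The three claims of the proposition follow quickly. Since $\mathcal{C}_{\bar c-\delta}$ is closed, taking closure yields $\Sigma^*=\overline{\rho(\mathcal{C}_{\bar c})}\subset\mathcal{C}_{\bar c-\delta}\subset\mathcal{C}_{\bar c}$, which is the first claim. Forward invariance then comes for free: $\Sigma^*\subset\mathcal{C}_{\bar c}$ gives $\rho(\Sigma^*)\subset\rho(\mathcal{C}_{\bar c})\subset\overline{\rho(\mathcal{C}_{\bar c})}=\Sigma^*$. Finally, Proposition \ref{CONTRACTIVEZONE} applied with parameter $c=\bar c-\delta<\bar c$ asserts that $\mathcal{C}_{\bar c-\delta}$ is a contractive zone for the partition $\mathcal{P}$ with rate $\lambda_{\bar c-\delta}<1$ in the supremum norm; restricting the estimate to pairs of points in $\Sigma^*\cap\Sigma_J$ shows that $\Sigma^*$ is itself a contractive zone for the finite partition $\{\Sigma_J\cap\Sigma^*\}_{J\in P(I)}$ with the same rate.

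The main obstacle is precisely the transition from the soft, pointwise strict bound $\rho_i(\V)<\bar c$ (which is all the avalanche analysis naturally provides) to the uniform bound $\rho_i(\V)\leq\bar c-\delta$. Without this margin, one could only conclude $\Sigma^*\subset\mathcal{C}_{\bar c}$, and Proposition \ref{CONTRACTIVEZONE} would return the useless value $\lambda_{\bar c}=1$. The strict inequality in (H3), turned into a positive gap by the finiteness of the synapse set, is exactly what the proof needs at this point.
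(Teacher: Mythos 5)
Your argument follows essentially the same route as the paper: split according to whether $J_0(\V)$ contains an excitatory neuron (Lemma \ref{JVAC} giving $\rho(\V)={\bf 0}$) or only inhibitory ones (so $J(\V)=J_0(\V)$ and every non-firing coordinate is pushed below $\theta-\min_{j\neq i}|H_{ji}|$), then feed the resulting inclusion into Proposition \ref{CONTRACTIVEZONE} and take closures. One small slip: your target set $\mathcal{C}_{\bar c-\delta}$, with $\bar c-\delta=\theta-\min_{j\neq i}|H_{ji}|$, can be empty, because (H3) puts no upper bound on the interactions; if $\min_{j\neq i}|H_{ji}|>\theta$ then $\bar c-\delta<0$, and both ``$\mathcal{C}_0\subset\mathcal{C}_{\bar c-\delta}$'' and ``$0\leq\bar c-\delta$'' fail as written. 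The fix is exactly what the paper does: use $c=\max\{0,\,\theta-\min_{j\neq i}|H_{ji}|\}$, which still satisfies $c<\bar c$ (since $\bar c>0$ under $\beta<\beta_+(\alpha)$), so $\rho(\mathcal{C}_{\bar c})\subset\mathcal{C}_c$ and the rest of your argument goes through unchanged.
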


\begin{proof} We show that $\rho(\mathcal{C}_{\bar c})$ is contained in one of the contractive zones $\mathcal{C}_c$  of Proposition \ref{CONTRACTIVEZONE}. Since the sets $\mathcal{C}_c$ are compact, the same will be true for the closure $\Sigma^*:=\overline{\rho(\mathcal{C}_{\bar c})}$. Moreover, as $\mathcal{C}_{\bar c}$ contains all the sets $\mathcal{C}_c$, it will follow that $\Sigma^*\subset \mathcal{C}_{\bar c}$ and
$\rho(\Sigma^*)\subset\rho(\mathcal{C}_{\bar c})\subset\overline{\rho(\mathcal{C}_{\bar c})}=\Sigma^*$.

Let ${\bf V}\in \mathcal{C}_{\bar c}$ and $k\in I$. Since by (\ref{RETURN}) we know that
$\alpha\leq\rho_k({\bf V})$, we have only have to prove that $\rho_k({\bf V})\leq c$ for some $c\in[0,{\bar c})$.
 Suppose $k\notin J({\bf V})$, then $J(\V)\neq I$, and as $\V\in\mathcal{C}_{\bar c}$, from Lemma \ref{JVAC} we deduce that $J_0({\bf V})$ contains only inhibitory neurons. It implies that $\V\in\Sigma_J$ where $J:=J({\bf V})=J_0({\bf V})$. Since $k$ receives only inhibitory interactions from the neurons of $J$, we have:
\[
\rho_k(\V)=\phi_k^{\overline{t}({\bf V})}({\bf V})+\sum\limits_{j\in J}H_{jk}<\theta-\min_{j\neq i}{|H_{ji}|}<\theta-\epsilon.
\]
A straightforward computation shows that $\theta-\epsilon={\bar c}$ and we have that
$\rho_k(\V)\leq\theta-\min\limits_{j\neq i}{|H_{ji}|}<{\bar c}$. Now, if  $k\in J({\bf V})$ then $\rho_k({\bf V})=0<{\bar c}$. It follows that $\rho_k(\V)\leq\max\{0,\theta-\min\limits_{j\neq i}{|H_{ji}|}\}<{\bar c}$ for all $k\in I$. We conclude that $\rho(\V)$ belongs to the contractive zone $\mathcal{C}_c$ with $c=\max\{0,\theta-\min\limits_{j\neq i}{|H_{ji}|}\}$. \qed
\end{proof}

Proposition \ref{INVARIANT} ensures that if an orbit of the return map falls in the contractive zone $\Sigma^*$, then it stays forever in this region. The following proposition states that if the network contains an inhibitory neuron, then all the orbits eventually reach the contractive zone.

\begin{Proposition}\label{ATTRACT} If the network contains an inhibitory neuron and if $p\in\N$ is such that
$\alpha+p\min_{j\neq i}{|H_{ji}|}\geq\theta$, then $\rho^{p+1}(\Sigma)\subset\Sigma^*$.
\end{Proposition}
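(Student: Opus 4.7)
The plan is to show that within the first $p$ iterations of $\rho$ at least one inhibitory neuron must fire; once such a firing occurs, Step~2 below places the next iterate in $\mathcal{C}_{\bar c}$, and Proposition~\ref{INVARIANT} then carries the orbit into $\Sigma^*$ by iteration $p+1$.

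\textbf{Step 1 (reduction).} Since $\Sigma^* = \overline{\rho(\mathcal{C}_{\bar c})}$ and $\rho(\Sigma^*) \subset \Sigma^*$ by Proposition~\ref{INVARIANT}, it suffices to exhibit some $m \in \{0,\dots,p-1\}$ with $\rho^{m+1}(\V) \in \mathcal{C}_{\bar c}$: then $\rho^{m+2}(\V)\in\rho(\mathcal{C}_{\bar c})\subset\Sigma^*$, and invariance propagates containment to $\rho^{p+1}(\V)$.

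\textbf{Step 2 (one inhibitory spike suffices).} I would first prove the lemma: \emph{if $J(\V)$ contains an inhibitory neuron $i$, then $\rho(\V)\in\mathcal{C}_{\bar c}$.} Indeed, for $k\in J(\V)$ one has $\rho_k(\V)=0$; and for $k\notin J(\V)$ the failure of the avalanche rule (\ref{AVAL}) at $k$ gives
\[
\phi_k^{\bar t(\V)}(\V) + \sum_{j\in J(\V),\,H_{jk}>0} H_{jk} < \theta .
\]
Since $i\in J(\V)$, $i\neq k$, and by (H4) combined with (H3) we have $H_{ik}\leq -|H_{ik}|<-\epsilon$, adding the remaining nonpositive interactions yields $\rho_k(\V)<\theta-\epsilon=\bar c$.

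\textbf{Step 3 (the key step: some inhibitory neuron fires in time $\leq p-1$).} Fix an inhibitory neuron $i_0$ and assume, for contradiction, that no inhibitory neuron fires at any iteration $m\in\{0,\dots,p-1\}$. Writing $V^{(m)} := \rho^m(\V)$ and $J^{(m)} := J(V^{(m)})$, each $J^{(m)}$ then consists only of excitatory neurons, so by (H3)--(H4) every $j\in J^{(m)}$ satisfies $H_{ji_0}\geq h := \min_{j\neq i}|H_{ji}|$. Because $J^{(m)}$ is nonempty and the free flow (\ref{FLOW}) is increasing while $V_{i_0}^{(m)}<\beta$, one obtains
\[
V_{i_0}^{(m+1)} \;\geq\; \phi_{i_0}^{\bar t(V^{(m)})}(V^{(m)}) + \sum_{j\in J^{(m)}} H_{ji_0} \;\geq\; V_{i_0}^{(m)} + h.
\]
Iterating from $V_{i_0}^{(0)}\geq\alpha$ gives $V_{i_0}^{(p)}\geq \alpha + p h \geq \theta$. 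On the other hand $i_0\notin J^{(p-1)}$ and all interactions to $i_0$ at that step are positive, so the failure of the avalanche at $i_0$ forces $V_{i_0}^{(p)}<\theta$—a contradiction.

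\textbf{Step 4 (conclusion).} Step 3 produces $m\in\{0,\dots,p-1\}$ at which an inhibitory neuron fires; Step 2 places $\rho^{m+1}(\V)\in\mathcal{C}_{\bar c}$; and Step 1 yields $\rho^{p+1}(\V)\in\Sigma^*$.

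The delicate point is Step 3, where one must combine the avalanche condition (which caps $V_{i_0}^{(p)}$ below $\theta$) with the monotone lower bound $V_{i_0}^{(m+1)}\geq V_{i_0}^{(m)}+h$ (which relies on nonnegativity of the drift and the purely excitatory character of the assumed firings) in order to contradict exactly the hypothesis $\alpha + p h \geq \theta$.
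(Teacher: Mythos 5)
Your proof is correct and follows essentially the same route as the paper's: you establish (in contrapositive form) that a firing set without inhibitory neurons is the only way an iterate can stay outside $\mathcal{C}_{\bar c}$ (using $\bar c=\theta-\epsilon$ with (H3)--(H4)), then show the potential of a non-firing inhibitory neuron must grow by at least $\min_{j\neq i}|H_{ji}|$ at each return, contradicting $\alpha+p\min_{j\neq i}|H_{ji}|\geq\theta$ after $p$ steps, and finally invoke Proposition \ref{INVARIANT}. The only differences are organizational (the paper phrases this via the sets $Z_p$ and concludes strictness from $\rho_i^p(\V)<\theta$, while you use the avalanche failure at step $p-1$), not mathematical.
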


\begin{proof} Let us define for each $p\in\N$ the set $Z_p$ of the points of the
Poincar\'e section that stay outside of $\mathcal{C}_{\bar c}$ during $p$ iterations:
\[
Z_p:=\{\V\in \Sigma\ :\ \rho^j(\V)\in \Sigma\setminus \mathcal{C}_{\bar c},\ 1\leq j\leq p\}.
\]
We are going to show that if the network contains an inhibitory neuron and $p$ is such that $\alpha+p\min_{j\neq i}{|H_{ji}|}\geq\theta$, then $Z_p$ is empty, that is $\rho^p(\Sigma)\subset\mathcal{C}_{\bar c}$, and by Proposition \ref{INVARIANT}, $\rho^{p+1}(\Sigma)\subset\Sigma^*$. Afterwards, $i$ denotes an inhibitory neuron of the network.

\vspace{1ex}
First suppose that $Z_1\neq\emptyset$ and take $\V\in Z_1$. By construction of the set $Z_1$, if $\V\in Z_1$, then $\rho_k(\V)>{\bar c}$ for some $k\in I$. This neuron $k$ cannot fire at time ${\bar t}(\V)$, since if it would then $\rho_k(\V)=0$, which contradicts $\rho_k(\V)>{\bar c}$. Therefore,
\[
\phi_k^{{\bar t}(\V)}(\V)+\sum_{j\in J(\V)\ : \ H_{jk}>0} H_{jk} <\theta
\]
and it follows that
\[
\rho_k(\V)=\phi_k^{{\bar t}(\V)}(\V)+\sum_{j\in J(\V)\ : \ H_{jk}>0} H_{jk} +\sum_{j\in J(\V)\ : \
H_{jk}<0}H_{jk}<\theta  +\sum_{j\in J(\V)\ : \ H_{jk}<0} H_{jk}.
\]
If $J(\V)$ contains an inhibitory neuron then $\rho_k(\V)<\theta-\min_{j\neq i}{|H_{ji}|}={\bar c}$, which is a contradiction.
Thus $J(\V)$ contains only excitatory neurons and $i$ does not belong to $J(\V)$. Therefore, the component $i$ of $\rho(\V)$ satisfies:
\[
\rho_i(\V)=\phi_i^{{\bar t}(\V)}(\V)+\sum_{j\in J(\V)} H_{ji}=(V_i-\beta)e^{-\gamma{\bar t}(\V)}+\beta+\sum_{j\in
J(\V)} H_{ji}>V_i+\min_{j\neq i}{|H_{ji}|}.
\]
So, we have proved that if $\V\in Z_1$ then $\rho_i(\V)> V_i+\min_{j\neq i}{|H_{ji}|}$.

\vspace{1ex}
If we suppose now that $Z_p\neq\emptyset$ for a $p>1$ and if we take $\V\in Z_p$, then $\rho^j(\V)\in Z_1$ for all $j\in\{0,\dots,p-1\}$. By induction it follows that
\begin{align*}
\theta>\rho_i^p(\V)=\rho_i(\rho^{p-1}(\V))&>\rho_i^{p-1}(\V)+\min_{j\neq i}{|H_{ji}|}\\
&>\rho_i^{p-2}(\V)+2\min_{j\neq i}{|H_{ji}|}>\dots>V_i+p\min_{j\neq i}{|H_{ji}|}
\end{align*}
As $V_i\geq \alpha$, we have just obtained that $\alpha+p\min_{j\neq i}{|H_{ji}|}<\theta$ if $Z_p\neq\emptyset$. Thus, if $\alpha+p\min_{j\neq i}{|H_{ji}|}\geq\theta$ the set $Z_p$ is empty and $\rho^p(\Sigma)\subset\mathcal{C}_{\bar c}$. \qed
\end{proof}

\begin{Remark}\label{PEPS}\em Proposition \ref{ATTRACT} allows to obtain an estimation of the transient time $t_{trans}$
necessary for all the orbits of the network to enter into the contractive zone $\Sigma^*$. Let $p_0$ is the smallest integer such that $\alpha+p_0\min_{j\neq i}{|H_{ji}|}\geq\theta$, that is $p_0:=\left\lceil(\theta-\alpha)/\min_{j\neq i}{|H_{ji}|}\right\rceil$. Then any orbit of the network, with an initial condition in the Poincar\'e section $\Sigma$, enters into the contractive zone after $p_0$ returns in $\Sigma$. Now recall that the return time (or the inter-spike interval) is the necessary time for the network to enter in the firing regime from its initial state $\V\in\Sigma$, that is ${\bar t}(\V)$.
A straightforward computation using the equation (\ref{FLOW}) of the flow, shows that an upper bound of
${\bar t}(\V)$ for $\V\in\Sigma$ is $T := \log\left (\beta/(\beta - \theta)\right )$. Therefore, we deduce the following upper bound for $t_{trans}$:
\[
t_{trans}\leq p_0T=\left\lceil\frac{\theta-\alpha}{\min_{j\neq i}{|H_{ji}|}}\right\rceil\log\left(1+\frac{\theta}{\beta - \theta}\right).
\]
It follows that the stronger the interactions are, the shorter is the transient time to enter into the contractive zone. On the other hand,
if $\beta-\theta$ is small then the condition (H3) can permit very small  interactions. In such a case, the upper bound does not prevent from very long transients. Nevertheless,  the transient time remains always finite.
\em
\end{Remark}

\begin{Corollary}\label{EXIT} If there exists $\V\in\Sigma$ such that $\rho^p(\V)\notin\Sigma^*$ for all $p\in\N$
then the network contains only excitatory neurons.
\end{Corollary}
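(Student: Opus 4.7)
The plan is to prove the contrapositive, which follows almost immediately from Proposition \ref{ATTRACT} combined with hypotheses (H3) and (H4). Specifically, I will show that if the network contains at least one inhibitory neuron, then every orbit enters the invariant contractive zone $\Sigma^*$ after finitely many iterations, contradicting the hypothesis of the corollary.

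First, I would assume for contradiction that the network contains an inhibitory neuron $i$. Under hypothesis (H3), we have a uniform lower bound $\min_{j\neq i}|H_{ji}| > \epsilon > 0$ on the magnitudes of the interactions. Since $\theta - \alpha$ is a fixed positive number, the quantity
\[
p_0 := \left\lceil \frac{\theta-\alpha}{\min_{j\neq i}|H_{ji}|} \right\rceil
\]
is a well-defined finite integer, and by construction it satisfies $\alpha + p_0 \min_{j\neq i}|H_{ji}| \geq \theta$.

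Next, I would apply Proposition \ref{ATTRACT} directly with this integer $p_0$: since the network contains an inhibitory neuron and the inequality on $p_0$ holds, the proposition yields $\rho^{p_0+1}(\Sigma)\subset\Sigma^*$. In particular, for the specific $\V\in\Sigma$ provided by the hypothesis of the corollary, we obtain $\rho^{p_0+1}(\V)\in\Sigma^*$, which directly contradicts the assumption that $\rho^p(\V)\notin\Sigma^*$ for every $p\in\N$. Consequently, the network cannot contain any inhibitory neuron.

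Finally, invoking hypothesis (H4) (Dale's principle), every neuron is either excitatory or inhibitory. Combined with the conclusion above, this forces every neuron of the network to be excitatory, which is exactly the statement of Corollary \ref{EXIT}. There is no real obstacle here: the result is essentially the logical contrapositive of Proposition \ref{ATTRACT}, and the only ingredient required beyond that proposition is the positivity of $\min_{j\neq i}|H_{ji}|$ guaranteed by (H3) (to ensure $p_0$ exists) and Dale's principle (to conclude that the absence of inhibitory neurons implies that all neurons are excitatory).
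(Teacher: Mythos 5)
Your proof is correct and is essentially the paper's intended argument: the corollary is stated without a separate proof precisely because it is the contrapositive of Proposition \ref{ATTRACT}, with (H3) guaranteeing that $p_0=\left\lceil(\theta-\alpha)/\min_{j\neq i}|H_{ji}|\right\rceil$ is finite and (H4) ruling out mixed neurons so that ``no inhibitory neurons'' indeed means ``only excitatory neurons.'' Nothing is missing.
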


To close this section about the contraction properties of (the return map of) IF neural networks we state the following Theorem \ref{TMETRIC}. It proves that if the parameters of a network satisfy the hypothesis (H3) and (H4) then its return map is piecewise contractive in the whole Poincar\'e section.
%
%
%

\begin{Theorem}\label{TMETRIC} Under the hypothesis
{\em (H1), (H2), (H3) and (H4)}, if at least one neuron is inhibitory, then there exists an adapted metric
and a partition $\mathcal{P}'$ of the Poincar\'{e} section $\Sigma$ such that   the return map $\rho$ is piecewise contractive in $\Sigma$ with respect to $\mathcal{P}'$.
\end{Theorem}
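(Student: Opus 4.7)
The plan is to exploit the trapping structure given by Propositions \ref{INVARIANT} and \ref{ATTRACT}. By Proposition \ref{ATTRACT}, the presence of an inhibitory neuron yields a uniform integer $q:=p_0+1$ with $\rho^q(\Sigma)\subset\Sigma^*$, while Propositions \ref{CONTRACTIVEZONE} and \ref{INVARIANT} ensure that $\rho|_{\Sigma^*}$ is piecewise contractive in the supremum norm with some rate $\lambda\in(0,1)$ relative to the natural partition $\mathcal{P}$. The goal is to encode this eventual contraction into an adapted metric on $\Sigma$ and, simultaneously, to refine $\mathcal{P}$ so that all the relevant atom-wise estimates apply uniformly to the first $q+1$ iterates.

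First I would replace $\mathcal{P}$ by the still finite refinement $\mathcal{P}_0:=\{\Sigma_J\cap\Sigma_i\}_{J\in P(I),\, i\in I}$, since on each of its atoms the map $\rho$ admits the smooth closed-form expression obtained by combining (\ref{RETURN}) with (\ref{PHITV}) for the single first-firing index $i$ and a fixed firing set $J$. The candidate partition in the statement is then the cylinder refinement
\[
\mathcal{P}' := \bigvee_{k=0}^{q}\rho^{-k}(\mathcal{P}_0),
\]
whose defining property is that, for any two points $\V,\W$ sharing an atom $A\in\mathcal{P}'$, all iterates $\rho^k(\V)$ and $\rho^k(\W)$ with $0\leq k\leq q$ lie in a common atom of $\mathcal{P}_0$. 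In particular $\rho^q(\V),\rho^q(\W)\in\Sigma^*$ share an atom of $\mathcal{P}$, so Proposition \ref{CONTRACTIVEZONE} applied to $\Sigma^*\subset\mathcal{C}_c$ delivers the one-step inequality $\|\rho^{q+1}(\V)-\rho^{q+1}(\W)\|\leq\lambda\,\|\rho^q(\V)-\rho^q(\W)\|$.

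Next I would introduce the adapted metric
\[
d'(\V,\W) := \max_{0\leq k\leq q}\lambda^{-k}\,\|\rho^k(\V)-\rho^k(\W)\|, \qquad \V,\W\in\Sigma.
\]
Non-negativity, symmetry and the triangle inequality follow directly from those of $\|\cdot\|$, and $d'(\V,\W)=0$ forces $\V=\W$ via the $k=0$ term, so $d'$ is a bona fide metric on $\Sigma$. For $\V,\W$ in the same atom of $\mathcal{P}'$, a shift of index gives
\[
d'(\rho(\V),\rho(\W)) = \lambda\cdot\max_{1\leq j\leq q+1}\lambda^{-j}\,\|\rho^j(\V)-\rho^j(\W)\|.
\]
The sub-maximum over $1\leq j\leq q$ is dominated by $d'(\V,\W)$ by definition, and the extra term $j=q+1$ satisfies $\lambda^{-(q+1)}\|\rho^{q+1}(\V)-\rho^{q+1}(\W)\|\leq\lambda^{-q}\|\rho^q(\V)-\rho^q(\W)\|\leq d'(\V,\W)$ by the contraction on $\Sigma^*$. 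Combining the two estimates yields $d'(\rho(\V),\rho(\W))\leq\lambda\,d'(\V,\W)$, which is precisely piecewise contractivity of $\rho$ on $\Sigma$ with respect to $\mathcal{P}'$ and $d'$.

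The main obstacle I anticipate is the partition bookkeeping: one has to track both the firing set $J$ and the spontaneously-firing index $i$ at each step $k\leq q$, and confirm that the resulting join $\mathcal{P}'$ is genuinely a finite partition of $\Sigma$ on which every $\rho^k$ with $k\leq q$ has a single smooth representative. Finiteness is automatic because $\mathcal{P}_0$ is finite and $q$ is fixed by the hypothesis of an inhibitory neuron; without such a neuron Corollary \ref{EXIT} shows that orbits may avoid $\Sigma^*$ forever and the construction collapses, consistently with the separate treatment of the all-excitatory case in Section \ref{SYNCHRO}.
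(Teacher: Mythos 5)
Your proof is correct, and it reaches the conclusion by a genuinely lighter route than the paper. The paper first proves a bounded-distortion lemma (Lemma \ref{HYPOKATOK}): using that $\rho$ is Lipschitz on each $\Sigma_J$, it gets a constant $c>0$ with $\|\rho^n(\V)-\rho^n(\W)\|\leq c\lambda^n\|\V-\W\|$ on every cylinder $\bigcap_{i=0}^n\rho^{-i}(\Sigma_{J_i})$, then chooses an auxiliary rate $\mu\in(\lambda,1)$ and a length $n_0$ with $c(\lambda/\mu)^{n_0}<1$, and works with the sum-type metric $d(\V,\W)=\sum_{i=0}^{n_0-1}\mu^{-i}\|\rho^i(\V)-\rho^i(\W)\|$ and the cylinder partition of length $n_0$; the contraction rate obtained is $\mu$. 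You instead tie the cylinder length to the uniform entry time $q=p_0+1$ into $\Sigma^*$ given by Proposition \ref{ATTRACT}, and use the max-type metric $d'(\V,\W)=\max_{0\leq k\leq q}\lambda^{-k}\|\rho^k(\V)-\rho^k(\W)\|$. The payoff is that the intermediate iterates $1\leq j\leq q$ are dominated by $d'(\V,\W)$ for free, so the only dynamical input needed is the single one-step contraction at time $q$, which holds because $\rho^q(\Sigma)\subset\Sigma^*\subset\mathcal{C}_c$ (Propositions \ref{INVARIANT} and \ref{CONTRACTIVEZONE}) and the cylinder structure puts $\rho^q(\V),\rho^q(\W)$ in the same $\Sigma_J$; no Lipschitz estimate, no distortion constant $c$, no auxiliary $\mu$ or $n_0$ are needed, and you even get the sharper rate $\lambda$. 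One small cleanup: your $\mathcal{P}_0=\{\Sigma_J\cap\Sigma_i\}$ is not literally a partition, since the sets $\Sigma_i$ of (\ref{PARTITIONBI}) form a cover that overlaps where several neurons reach the threshold simultaneously; but this refinement is in fact superfluous for your argument (only common membership in a set $\Sigma_J$ at step $q$ is ever used), so you can simply take $\mathcal{P}'=\bigvee_{k=0}^{q}\rho^{-k}(\mathcal{P})$, or disjointify the $\Sigma_i$ by an arbitrary tie-breaking rule, without affecting anything else.
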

The proof of the theorem is given appendix \ref{PROOFTMETRIC}. It follows from Proposition \ref{INVARIANT} and Proposition \ref{ATTRACT} which allow to prove that $\rho$ is ``eventually piecewise contractive" (see Lemma \ref{HYPOKATOK} in appendix \ref{PROOFTMETRIC}). This is a generalization for piecewise continuous maps of the definition of eventually contractive maps (see for instance Definition 2.6.11 in \cite{HK}). Then, the classical arguments to prove the existence of an adapted metric can be reproduced for piecewise continuous maps.


\vspace{1ex}
In the sequel of the paper we will based our study on Proposition \ref{INVARIANT} and Proposition \ref{ATTRACT}. They show that for the region of parameters defined by (H3) and (H4), the Poincar\'e section contains two important regions: the contractive zone ($\Sigma^*$), which is a forward invariant set of the return map, and the set of the points whose orbit never fall in the contractive zone. This last set can be non-empty only for networks composed exclusively of excitatory neurons  (cf Corollary \ref{EXIT}). We study these networks in the Section \ref{SYNCHRO}.
In the case of the networks containing inhibitory neurons, all the orbits drop into the contractive zone after a finite time (Proposition \ref{ATTRACT}). The study of the asymptotic dynamics reduces then to the analysis of the dynamics of the return map in the contractive zone. This is the purpose of Section \ref{ASYMPTOTIC}.


\section{Synchronization}\label{SYNCHRO}

In this section we prove the part (1) of Theorem \ref{Theorem1} stating sufficient conditions for the
global synchronization of a network.

\begin{Definition} \em We say that an orbit $\{\V(t)\}_{t \in \R^{+}}$ is \em a synchronized orbit \em of
the network, if $V_i(t)=V_j(t)$ for all $i,j\in I$ and $t\in\R^{+}$. We say that the network   \em globally
synchronizes, \em if for \em any \em initial state $\V(0)$ there exists $t_0\in\R^{+}$ such that
$\{\W(t)=\V(t+t_0)\}_{t \in \R^{+}}$ is a synchronized orbit.
\end{Definition}
Up to a time translation, a network admits only one synchronized orbits and this orbit is periodic. Indeed, suppose a synchronized orbit intersects the Poincar\'e section at a point $\V\in\Sigma$. Since $\V\in\Sigma$, one of its component is equal to zero, and since $\V$ belongs to a synchronized orbit, all its components are equal. It follows that any synchronized orbit intersects $\Sigma$ at the same point, which is the origin {\bf 0} of $\R^n$. Moreover, as in the sub-threshold regime all the neurons obey to the same the differential equation (\ref{IF}), if $\V={\bf 0}$, then all the neurons reach the threshold spontaneously at the same time and their potential is reset to $0$ at the same time. In other word $\rho({\bf 0})={\bf 0}$ and therefore the orbits of {\bf 0} is a periodic orbit of the network.

Latter on, we will call  $\V\in\Sigma$ a state of eventual synchronization, if there exists $l\in\N$ such that $\rho^{l}(\V)={\bf 0}$. Note that a network globally synchronizes if and only if all the points of the Poincar\'e section are states of eventual synchronization.

\begin{Theorem}\label{GLOBALSYNCHRO} Under hypothesis \em (H1), \em if the network is exclusively composed of excitatory neurons, that is $\min\limits_{j \neq i} H_{ji} >0$, and if the number $n$ of neurons satisfies
\begin{equation}\label{NUMBERNEURON}
n\geq\left\lceil\frac{\theta}{\min\limits_{j \neq i} H_{ji}}\right\rceil^2,
\end{equation}
where $\lceil x \rceil$ denotes the first integer larger or equal to $x$, then the network globally synchronizes. Moreover, the transitory time before the global synchronization
of the network is not larger than
\[
t_{trans}:=\ln\left(\frac{\beta-\alpha}{\beta-\theta}\right)^\frac{1}{\gamma}+
\left\lceil\frac{\theta}{\min\limits_{j \neq i} H_{ji}}\right\rceil\ln\left(\frac{\beta}{\beta-\theta}\right)^\frac{1}{\gamma}.
\]
\end{Theorem}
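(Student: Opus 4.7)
The plan is to split the argument into three phases: a transient that drives every orbit into the box $[0,\theta]^n$, a counting step that forces the eventual appearance of a large simultaneous firing, and an avalanche step that turns such a large firing into global synchronization. For the transient, I would observe that under the sub-threshold flow alone a neuron starting at $V_i=\alpha$ reaches $\theta$ in exactly $T_\alpha:=(1/\gamma)\ln\bigl((\beta-\alpha)/(\beta-\theta)\bigr)$; since all interactions are non-negative, firings of other neurons can only anticipate this first firing of $i$. Consequently every neuron fires at least once by time $T_\alpha$, its potential is then reset to $0$, and thereafter stays in $[0,\theta)$ because the sub-threshold flow satisfies $\phi^t(V)\geq V$ on $[0,\beta)$ and all subsequent excitations are non-negative. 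So the state lies in $[0,\theta]^n$ from time $T_\alpha$ on.

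Set $h:=\min_{j\neq i}H_{ji}$ and $m:=\lceil\theta/h\rceil$. The next ingredient is an ``absorption lemma'': if $\V\in[0,\theta]^n$ and $|J(\V)|\geq m$, then $J(\V)=I$. Indeed, any $k\notin J(\V)$ would satisfy $\phi^{\bar t(\V)}_k(\V)\geq 0$ and $\sum_{j\in J(\V)}H_{jk}\geq|J(\V)|h\geq\theta$, contradicting the termination criterion of the avalanche in (\ref{AVAL}). When $J(\V)=I$ all neurons reset simultaneously to $0$, so $\rho(\V)={\bf 0}$; and since $\rho({\bf 0})={\bf 0}$, from then on the orbit is the synchronized periodic orbit through ${\bf 0}$.

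The main content, and the step I expect to be the most delicate, is the counting argument. Let $J_1,J_2,\dots$ denote the successive firing groups after the state first enters $[0,\theta]^n$, write $k_s:=|J_s|$, and denote by ${\bf V}^{(s)}$ the state right after event $s$. By induction on $s$, if neuron $i$ has not fired during events $1,\dots,s$, then $V_i^{(s)}\geq(k_1+\cdots+k_s)h$; this uses $\phi^t(V)\geq V$ on $[0,\beta)$ together with the non-negative excitation of at least $k_sh$ received at each event. Since $V_i^{(s)}<\theta$ whenever $i$ has not fired (by avalanche termination), this forces $k_1+\cdots+k_s<m$, and at $s=m$ this contradicts $k_r\geq 1$. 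Hence every neuron fires within the first $m$ events, giving $\sum_{s=1}^{m}k_s\geq n\geq m^2$. If every $k_s$ were at most $m-1$, the sum would be at most $m(m-1)<m^2$, a contradiction. So some $k_s\geq m$, and the absorption lemma produces global synchronization at that event. The time bound $T_\alpha+mT_0$, with $T_0:=(1/\gamma)\ln\bigl(\beta/(\beta-\theta)\bigr)$, follows by bounding each of the (at most) $m$ inter-spike intervals inside $[0,\theta]^n$ by the maximal return time $T_0$ from $V=0$.
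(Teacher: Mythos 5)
Your proposal is correct and follows essentially the same route as the paper's proof: a transient of length $(1/\gamma)\ln\bigl((\beta-\alpha)/(\beta-\theta)\bigr)$ bringing all potentials into $[0,\theta]$, an induction showing every neuron fires within $\lceil\theta/h\rceil$ firing events, a pigeonhole step forcing some event with at least $\lceil\theta/h\rceil$ simultaneous firings, and the observation that such a group triggers a full avalanche, with the same bound on the return times. Your cumulative bound $(k_1+\cdots+k_s)h$ is a mild refinement of the paper's ``at least $h$ per event'' estimate, but the argument is the same.
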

In networks of excitatory neurons, the synchronization state is persistent. That is to say,
the network recovers its synchronization in a finite time, if a small perturbation is applied to the
orbit of the synchronized state ${\bf 0}\in\Sigma$. Precisely, the flow $\phi^t$ that solves the differential equation (\ref{IF}) depends continuously on the initial state $\bf V$ for  fixed $t>0$, provided that in the interval $(0,t)$ no neuron fires. Therefore, there exists $\delta^*>0$ such that if $\|\bf V - \bf 0\|< \delta^*$
then $\|\phi ^t({\bf V}) - \phi ^t ({\bf 0})\|< m$, where $m:=\min\limits_{j \neq i} H_{ji}$.
If $i$ is the first neuron to reach the threshold potential from the perturbed
state $\bf V$ at time ${\bar t(\V)}$, the potential of all the other neurons belongs to $(\theta- m, \theta]$ at this instant. Since the minimum interaction with $i$ is of magnitude $m$, the firing of $i$ induces the firing of the other neurons. The interactions being instantaneous,
the potential of $i$ and of the other neurons are reset to $0$ simultaneously and $\rho(\V)={\bf 0}$.
The same argument works as well to prove that the global synchronization is stochastically stable.
If a synchronized orbit suffers a stochastic perturbation such that, at the instant when a first neuron
reaches the threshold, the potential of the other neurons is sufficiently near the threshold, then
this orbit goes back to the Poincar\'e section at the synchronized state.

\begin{proof}{\em of Theorem \ref{GLOBALSYNCHRO}: \em} As the network does not contain inhibitory neurons,
a time $t_{+}$ from which the potential of all the neurons is larger or equal to zero exists. This time is smaller
than the necessary time for a neuron, with an initial potential equal to $\alpha$, to reach the threshold spontaneously. That is to say,
\[
t_{+}\leq \frac{1}{\gamma}\ln\left(\frac{\beta-\alpha}{\beta-\theta}\right),
\]
recall (\ref{FLOW}). Besides, after $t_+$, it is in the set
\[
\Sigma^{+}:=\{\V\in\Sigma\ :\ V_i\geq 0\ \forall\,i\in I\}
\]
that the orbits of the network intersect the Poincar\'e section. Therefore, to prove the global synchronization of the network, it is enough to do it for the initial states in $\Sigma^+$. Thus, we are going to show that there exists a natural number $l$ such that $\rho^{l}(\V)={\bf 0}$ for all $\V\in\Sigma^{+}$.

\vspace{1ex}
Let us denote
\begin{equation}\label{MANDP}
m: = \min_{j \neq i} H_{ji} >0 \quad\text{and}\quad p:=\left\lceil \frac{\theta}{m}\right\rceil,
\end{equation}
We state and prove two claims to achieve the proof of the theorem.

\vspace{1ex}
\noindent {\bf Claim 1:} For any initial state $\V$ of the network in $\Sigma^{+}$, each neuron fires
at least once before the $p^{\text{th}}$ return of the orbit of $\V$ in $\Sigma^{+}$. In brief, the set $J^p(\V)$ defined by
\[
J^p(\V):=\bigcup\limits_{j=0}^{p-1}J(\rho^j(\V))
\]
is equal to $I$ for any $\V\in\Sigma^{+}$.

\vspace{1ex}
\noindent Let $i\in I$. Let us show that if $i\notin J^{p-1}(\V)$, that is $i\notin J(\rho^{j}(\V))$ for all $j\in\{0,\dots,p-2\}$, then $i\in J(\rho^{p-1}(\V))$. If $i\notin J^{p-1}(\V)$, then, by definition of $\rho$, for any
$j\in\{1,\dots,p-1\}$ we have:
\[
\rho_i^j(\V)=\phi^{{\bar t}(\rho^{j-1}(\V))}_i(\rho^{j-1}(\V))+\sum_{k\in J(\rho^{j-1}(\V))} H_{ki}
> \rho_i^{j-1}(\V) + m,
\]
where the inequality is obtained using that $\phi^{{\bar t}(\V)}_i(\V)>V_i$ for all $\V\in\Sigma$ and $i\in I$,
and the fact that all the interactions are excitatory. By induction it follows that
$\rho_i^{p-1}(\V)>V_i+(p-1)m $.
We deduce that
\[
\phi^{{\bar t}(\rho^{p-1}(\V))}_i(\rho^{p-1}(\V))+\sum_{k\in J(\rho^{p-1}(\V))} H_{ki}
> \rho_i^{p-1}(\V) + m >V_i+pm \geq pm \geq \theta,
\]
which implies that $i\in J(\rho^{p-1}(\V))$ and proves the claim.

\vspace{1ex}
\noindent {\bf Claim 2:} For any initial state of the network $\V\in\Sigma^+$, there exists $j_0\in\{0,\dots,p-1\}$ such that the number of neurons that fire simultaneously at time ${\bar t}(\W)$, where $\W$ is the $j^{\text{th}}$ return of the orbit of $\V$ in $\Sigma^+$, is larger than $\theta/m$. In brief, for all $\V\in\Sigma^+$, there exists $j_0\in\{0,\dots,p-1\}$
such that  $\# J(\rho^{j_0}(\V))\geq\theta/m$.

\vspace{1ex}
\noindent
Let $\V\in\Sigma^+$ and $j_0$ be such that $\# J(\rho^{j_0}(\V))\geq \# J(\rho^{j}(\V))$ for all
$j\in\{0,\dots,p-1\}$. According to Claim 1, we have $n=\# J^p(\V)$. Therefore,
\[
n=\#J^p(\V)\leq\sum_{j=0}^{p-1}\# J(\rho^j(\V))\leq p\# J(\rho^{j_0}(\V)).
\]
Using (\ref{NUMBERNEURON}) we prove Claim 2:
\[
\# J(\rho^{j_0}(\V))\geq \frac{n}{p}=\left\lceil\frac{\theta}{m}\right\rceil.
\]

The quotient $\theta/m$ is the number of times that the minimum synaptic weight $m$ has to be added to
the potential of a neuron with the minimal potential $0$ to reach $\theta$. Claim 2 shows that at some finite instant, the number of neurons that fire simultaneously is at least
as large as this quotient. Thus, the firing of these neurons necessarily induces  the firing of any other, at the same instant.  Indeed, in the mathematical notation, using Claim 2 we obtain
\[
\phi^{{\bar t}(\rho^{j_0}(\V))}_i(\rho^{j_0}(\V))+\sum_{k\in J(\rho^{j_0}(\V))} H_{ki}
> \rho^{j_0}_i(\V) + m\frac{\theta}{m}\geq \theta,\quad \forall i\in I,\, \V\in\Sigma_+.
\]
Therefore, $J(\rho^{j_0}(\V))=I$ and $\rho^{l}(\V)={\bf 0}$ with $l=j_0+1$.

\vspace{1ex}
The global synchronization of the network need at most a time $t^+$ to enters in the
set $\Sigma^+$ and $j_0+1\leq p$ iterates of the return map. Thus, the transitory regime
does not exceed the time
\[
t_{trans}=\frac{1}{\gamma}\ln\left(\frac{\beta-\alpha}{\beta-\theta}\right) + \frac{p}{\gamma}\ln\left(\frac{\beta}{\beta-\theta}\right)=\frac{1}{\gamma}\left(\ln\left(\frac{\beta-\alpha}{\beta-\theta}\right)+
\left\lceil\frac{\theta}{m}\right\rceil\ln\left(\frac{\beta}{\beta-\theta}\right)\right).
\]
\qed
\end{proof}

Theorem \ref{GLOBALSYNCHRO} shows that a network of excitatory neurons globally synchronizes if the number of neurons $n$ is sufficiently large. This theorem only requires the hypothesis (H1), to ensures the obvious fact that   no neuron has an infinitely negative potential. As all the synapses are supposed excitatory, the hypothesis (H2) and (H4) are not needed. Likewise, the hypothesis (H3) of the existence of a lower bound $\epsilon>0$ for the interactions is not required. In particular, positive but arbitrarily small interactions may affect the length of the transitory regime, but do not prevent the network from globally synchronizing, provided $n$ is sufficiently large. Finally, if the number of neurons does not satisfies (\ref{NUMBERNEURON}) the global synchronization may not occur. We illustrate this statement with a simple example involving two neurons and constant interactions.

\begin{Proposition}\label{NOGLOBALSYNCHRO} Let $I=\{1,2\}$ and $H_{12}=H_{21}=H\in (0,\theta)$. The orbit $\{\rho^k(\V)\}_{k\in\N}$ of the point
\[
\V=(\beta-x,0)\quad\text{where}\quad x:=\frac{1}{2}\left(\sqrt{H^2+4\beta(\beta-\theta)}-H\right)
\]
is not synchronized. It is a period two orbit, the points of which are $(\beta-x,0)$ and $(0,\beta-x)$.
\end{Proposition}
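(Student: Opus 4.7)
The plan is to verify directly that $\V$ and its image $\rho(\V)$ form a two-cycle, using the closed-form expression for the flow (\ref{PHITV}) and the fact that the two-neuron symmetry $H_{12}=H_{21}$ reduces the second step to the first.

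First I would check that $\V\in\Sigma$, namely $0<\beta-x<\theta$. The upper bound $\beta-x<\theta$, i.e.\ $x>\beta-\theta$, is equivalent after rearranging and squaring to $\theta>H$, which holds by hypothesis. The lower bound $x<\beta$ is equivalent to $H>-\theta$, trivially true. Next, since $V_1=\beta-x>0=V_2$, by (\ref{WTIME}) neuron $1$ reaches threshold strictly before neuron $2$, so $\bar t(\V)=t_1(\V)$ and $J_0(\V)=\{1\}$. Applying (\ref{PHITV}) with $V_1=\beta-x$ and $V_2=0$ gives
\[
\phi_2^{\bar t(\V)}(\V)=\beta-\frac{\beta(\beta-\theta)}{x}.
\]

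Now I would use the defining equation of $x$ in the form $x^2+Hx=\beta(\beta-\theta)$ (which is precisely the quadratic whose positive root is $x$). Dividing by $x$ gives $\beta(\beta-\theta)/x=x+H$, so
\[
\phi_2^{\bar t(\V)}(\V)+H=\beta-(x+H)+H=\beta-x.
\]
Since $\beta-x<\theta$, the avalanche rule (\ref{AVAL}) does not recruit neuron $2$, so $J(\V)=J_0(\V)=\{1\}$; and since $\beta-x>0>\alpha$, no lower-bound truncation occurs. By the formula (\ref{RETURN}) of the return map, $\rho(\V)=(0,\beta-x)$.

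The second iterate follows by symmetry: the hypothesis $H_{12}=H_{21}$ makes the model invariant under the permutation swapping neurons $1$ and $2$, so repeating the above argument with the roles of neurons $1$ and $2$ exchanged (starting from $(0,\beta-x)$ where now neuron $2$ has the larger potential) yields $\rho(0,\beta-x)=(\beta-x,0)=\V$. Hence $\V$ is a periodic point of $\rho$ of period exactly $2$ (the two states are distinct because $\beta-x\neq 0$). Finally, neither state has $V_1=V_2$, so no point of the orbit lies in the synchronized fixed set $\{{\bf 0}\}$ of $\Sigma$; by the observation preceding Theorem \ref{GLOBALSYNCHRO} that any synchronized orbit must intersect $\Sigma$ at ${\bf 0}$, the orbit of $\V$ is not synchronized. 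The only non-routine step is the algebraic identity $x^2+Hx=\beta(\beta-\theta)$, which is by design the quadratic selecting $x$, so the argument reduces to bookkeeping.
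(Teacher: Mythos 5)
Your proposal is correct and follows essentially the same route as the paper: check $\V\in\Sigma$ from $H\in(0,\theta)$, compute $\phi_2^{\bar t(\V)}(\V)+H=\beta-x$ via (\ref{PHITV}) and the quadratic identity $x^2+Hx=\beta(\beta-\theta)$, conclude $J(\V)=\{1\}$ since $\beta-x<\theta$, and obtain the second iterate by exchanging the roles of the two neurons. Your explicit remarks about the identity defining $x$, the absence of the $\alpha$-truncation, and why the orbit is not synchronized are details the paper leaves implicit, but the argument is the same.
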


\begin{proof} A straightforward computation shows that $\beta-x<\theta$ if and only if $H<\theta$, and $\beta-x>0$ if and only if $H>-\theta$. Thus, the condition $H\in(0,\theta)$ ensures that $\V\in\Sigma$ and not equal to $(0,0)$. Now, let us show that $\rho(\V)=(0,\beta-x)$. As $V_1>0=V_2$, the neuron $1$ fires spontaneously at time ${\bar t }(\V)=t_1(\V)$ and $J_0(\V)=\{1\}$. Using (\ref{PHITV}) with $V_2=0$, we obtain
\[
\phi_2^{{\bar t}(\V)}(\V) + H = \beta-\frac{\beta(\beta-\theta)}{(\beta-V_1)} + H=\beta-\frac{\beta(\beta-\theta)}{x}+ H = \beta -x.
\]
Since $\beta -x<\theta$, we deduce that the neuron $1$ does not induce the firing of the neuron $2$, and
$J_1(\V)=J_0(\V)$. Thus $J(\V)=\{1\}$, and
\[
\rho_1(\V)=0\qquad\text{and}\qquad\rho_2(\V)=\phi_2^{{\bar t}(\V)}(\V) + H =\beta -x,
\]
which proves that $\rho(\V)=(0,\beta-x)$. Substituting $V_1$ by $\rho_2(\V)$ and $V_2$ by $\rho_1(\V)$ in the
proof of $\rho(\V)=(0,\beta-x)$, we obtain $\rho^2(\V)=(\beta-x,0)=\V$ \qed
\end{proof}
The result of Proposition \ref{NOGLOBALSYNCHRO} can be easily extended to an arbitrary large number of neurons. For example, in a network with $2k=n$ neurons, with equal synaptic weight $H/k$ where $H\in(0,\theta)$, any initial state such that half of the neurons have their potential equal to $(\beta-x)$, and the other half have their potential equal to $0$, is a periodic point of period $2$.

\vspace{1ex}
When Hypothesis (H3) is assumed, and thus the interactions are bounded bellow by a positive number (which tends to zero when $(\beta-\theta)$ tends to zero), the global synchronization is not sure, since the hypothesis of Proposition \ref{NOGLOBALSYNCHRO} can be satisfied as well.
However, according to Lemma \ref{JVAC}, there is always some initial states for which the network synchornizes.

\begin{Proposition}\label{DEATHSYNCH.1} Under Hypothesis \em (H1) \em and \em (H3)\em, if the network is exclusively composed of excitatory neurons, and if its orbit visits the contractive zone $\Sigma^*$,  then the network synchronizes, independently of its size.

\end{Proposition}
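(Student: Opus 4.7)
The plan is to reduce the proposition to a direct application of Lemma \ref{JVAC}, combined with the fact that the origin $\mathbf{0}\in\Sigma$ is the (unique) synchronized state and is fixed by $\rho$. First I would note that by hypothesis the orbit visits $\Sigma^*$, i.e.\ there exists $k\geq 0$ with $\W:=\rho^k(\V)\in\Sigma^*$, and by Proposition \ref{INVARIANT} we have $\Sigma^*\subset\mathcal{C}_{\bar c}$, so $\W\in\mathcal{C}_{\bar c}$.

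Next I would argue that at least one excitatory neuron fires spontaneously from $\W$. This is immediate: by definition of $\bar t(\W)$, the set $J_0(\W)$ is nonempty, and since under the hypothesis of the proposition every neuron is excitatory, any element of $J_0(\W)$ is an excitatory neuron. Hence the hypotheses of Lemma \ref{JVAC} are met, which yields $J(\W)=I$. Consequently, from the formula (\ref{RETURN}) of the return map, every component of $\rho(\W)$ is reset to $0$, i.e.\ $\rho^{k+1}(\V)=\mathbf{0}$.

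To conclude, I would observe that $\mathbf{0}$ is fixed by $\rho$: starting from $\V(t_0)=\mathbf{0}$, all components satisfy the same scalar ODE (\ref{IF}), reach $\theta$ simultaneously, are all reset together, so $\rho(\mathbf{0})=\mathbf{0}$ and the continuous--time orbit after the return time $\bar t(\mathbf{0})$ satisfies $V_i(t)=V_j(t)$ for all $i,j$ and $t$. Translating in time by $t_0+\bar t(\mathbf{0})$, the orbit is synchronized in the sense of the definition of this section.

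I do not expect a genuine obstacle here: the proposition is essentially a corollary of Lemma \ref{JVAC}, with the only subtle point being that one must verify the nonemptiness of $J_0(\W)$ and the trivial excitatory character of whichever neuron reaches threshold first. The argument makes no use of (H2), (H3) beyond what is already encoded in the definition of $\mathcal{C}_{\bar c}$ and of $\Sigma^*$, nor of the network size; in particular, in contrast to Theorem \ref{GLOBALSYNCHRO}, the bound (\ref{NUMBERNEURON}) is not needed because visiting $\Sigma^*$ already forces the potentials to be sufficiently close to make one spike trigger an avalanche over the whole network.
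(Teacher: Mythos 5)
Your proposal is correct and follows essentially the same route as the paper: visiting $\Sigma^*\subset\mathcal{C}_{\bar c}$ with all neurons excitatory makes Lemma \ref{JVAC} applicable, giving $J(\W)=I$ and hence $\rho(\W)=\mathbf{0}$, the synchronized fixed state. The extra remarks (nonemptiness of $J_0(\W)$, $\rho(\mathbf{0})=\mathbf{0}$) only make explicit what the paper leaves implicit from Section \ref{SYNCHRO}.
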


\begin{proof} Suppose the orbit of the network visits $\Sigma^*$ at a point $\V$. Since  $\Sigma^*\subset\mathcal{C}_{\bar c}$ and $I$ is only composed of excitatory neurons, the hypothesis of Lemma \ref{JVAC} are satisfied. We deduce that $J(\V) = I$, and so $\rho(\V) = {\bf 0}$.
\qed
\end{proof}

Now, if the network has also inhibitory neurons, any orbit visits $\Sigma^*$ after a finite transitory regime, see Proposition \ref{ATTRACT}. Therefore, if the network does not synchronizes, it is because the excitatory neurons stop to fire after a finite time. We prove rigorously this results in what follows.

\begin{Definition} We say that $\V\in\Sigma$  is a state of eventual death of the neuron $i$, if
there exists $p\in\N$ such that $i\notin J(\rho^j(\V))$ for all $j\geq p$.
\end{Definition}
In other words, a state of eventual death of a neuron is a state such that the neuron stops to emit spikes after a
certain time. Therefore, we define a state of the (continuous time) model of eventual death as a state whose orbit
intersects the Poincar\'e section in a state of eventual death.

\begin{Proposition}\label{DEATHSYNCH} Under the hypothesis {\em (H1), (H2), (H3) and (H4)}, if the network is composed
of excitatory and inhibitory neurons, then the states of the network are either states of eventual synchronization
or states of eventual death of all the excitatory neurons.
\end{Proposition}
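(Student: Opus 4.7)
The plan is to combine Proposition \ref{ATTRACT} (which forces every orbit into the contractive zone $\Sigma^*$ after finitely many returns) with Lemma \ref{JVAC} (which says that in $\Sigma^*$ the spontaneous firing of a single excitatory neuron triggers a full avalanche). Since the network contains at least one inhibitory neuron, Proposition \ref{ATTRACT} gives an integer $p_0$ such that $\rho^{j}(\V) \in \Sigma^*$ for every $j \geq p_0$ and every $\V \in \Sigma$. Thus it suffices to study the orbit from the moment it has entered $\Sigma^*$.

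For such an orbit, I would distinguish two mutually exclusive cases. \textbf{Case A:} there exists $j \geq p_0$ such that $J_0(\rho^{j}(\V))$ contains an excitatory neuron. Since $\rho^{j}(\V) \in \Sigma^* \subset \mathcal{C}_{\bar c}$, Lemma \ref{JVAC} yields $J(\rho^{j}(\V)) = I$. By the definition (\ref{RETURN}) of the return map, all components are reset to zero, so $\rho^{j+1}(\V) = \mathbf{0}$, which means $\V$ is a state of eventual synchronization. \textbf{Case B:} for every $j \geq p_0$ the set $J_0(\rho^{j}(\V))$ contains only inhibitory neurons. I then want to promote this to the stronger statement that $J(\rho^{j}(\V))$ itself contains only inhibitory neurons, which will immediately show that no excitatory neuron fires after time $p_0$, i.e.\ that $\V$ is a state of eventual death of all the excitatory neurons.

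The step that really uses Hypothesis (H4) is this promotion from $J_0$ to $J$. Under Dale's principle, an inhibitory neuron $i$ satisfies $H_{ik} \leq 0$ for every $k$, so if $J_{m-1}(\W)$ contains only inhibitory neurons the excitatory sum in the avalanche rule (\ref{AVAL}),
\[
\sum_{i \in J_{m-1}(\W)\,:\, H_{ik} > 0} H_{ik},
\]
is empty and hence vanishes. Consequently $J_m(\W) = J_{m-1}(\W) \cup \{k \notin J_{m-1}(\W)\,:\, \phi_k^{\bar t(\W)}(\W) \geq \theta\}$, and the added neurons are already elements of $J_0(\W) \subseteq J_{m-1}(\W)$. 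By induction on $m$ this gives $J(\W) = J_0(\W)$ whenever $J_0(\W)$ is purely inhibitory. Applying this with $\W = \rho^{j}(\V)$ for every $j \geq p_0$ closes Case B.

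The main obstacle, or at least the only non-routine point, is recognising that Dale's principle (H4) is exactly what is needed to rule out the scenario in which an excitatory neuron is dragged into the avalanche by some inhibitory neuron that fired spontaneously — the previous propositions only controlled $J_0$, not the full avalanche $J$, and without (H4) Case B could a priori still allow excitatory firings through mixed chains. Once this observation is in place, the dichotomy of Cases A and B matches precisely the statement of the proposition.
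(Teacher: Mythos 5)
Your proof is correct and follows essentially the same route as the paper: reduce to the forward invariant contractive zone $\Sigma^*$ via Propositions \ref{ATTRACT} and \ref{INVARIANT}, then obtain the synchronization branch from Lemma \ref{JVAC} and the death branch from the fact that a purely inhibitory $J_0$ yields $J=J_0$. The only cosmetic difference is that you split cases according to whether some $J_0(\rho^j(\V))$ ever contains an excitatory neuron and re-derive $J(\W)=J_0(\W)$ from (H4) and (\ref{AVAL}), whereas the paper splits on membership of an excitatory neuron in $J(\rho^j(\V))$ and uses that same fact (noted earlier in the text) implicitly.
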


\begin{proof} Let $i$ be an excitatory neuron and $\V\in\Sigma$. By
Proposition \ref{ATTRACT} we can assume $\V\in\Sigma^*$ without loss of generality. We have, either $i\notin
J(\rho^j(\V))$ for all $j\in\N$ or there exists $j\in\N$ such that $i\in J(\rho^j(\V))$. In the first case $\V$ is
a state of eventual death of the neuron $i$. In the second case, by Proposition \ref{INVARIANT} it follows $\rho^j(\V)\in\Sigma^*$ for all $j\in\N$, and then we can apply Lemma \ref{JVAC} to deduce that $J(\rho^j(\V))=I$.
This implies $\rho^{j+1}(\V)={\bf 0}$. \qed
\end{proof}


\section{Asymptotic dynamics of networks with inhibitory neurons}\label{ASYMPTOTIC}

In the previous section we studied the dynamics of networks composed exclusively of excitatory neurons. Now we focus on networks containing at least one inhibitory neuron. Along this section, we assume Hypothesis (H1), (H2), (H3) and (H4) on the parameter values, and that the set $I^-$ of inhibitory neurons is non empty.

Section \ref{subsectionPartitionSubc} is a preliminary section where a more precise characterization of the return map in the contractive zone $\Sigma^*$ is given. We study its continuity pieces and show that it satisfies a general definition of piecewise contracting maps. In Section \ref{STABLANDCAOTIC}, we introduce the concepts of stable set and sensitive set. Then, in Section \ref{PERIODSYST} we give and we comment the principal results about the asymptotic dynamics of the return map in both of these sets. It contains in particular Theorem \ref{teoremaPrincipal}, which is a more detailed version of the part 2) of Theorem \ref{Theorem1}, about the dynamics in the stable set. Finally, the ending Section \ref{PROOFCYCLESETA} is the proof of a proposition contained in Section \ref{PERIODSYST}.

\subsection{Continuity pieces of the return map in the contractive zone $\Sigma^*$.}\label{subsectionPartitionSubc}

In Proposition \ref{ATTRACT} we have shown that any orbit of a network containing inhibitory neurons finally drops into the forward invariant contractive zone $\Sigma^{\ast}$. Then, it is not restrictive to consider this space as our new phase space\footnote{From now on, $\rho$ will denote the return map restricted to $\Sigma_c$.}. In Section \ref{RETURNMAP}, we introduced the partition $\mathcal{P}$ of the Poincar\'e section defined by (\ref{PARTITION}). When restricting our the phase space to $\Sigma^*$, this partition becomes
\[
\mathcal{P}^*:=\{\Sigma^*_{J}\}_{J\in P(I)}\qquad\mbox{where}\qquad \Sigma^*_{J}:=\Sigma_J\cap \Sigma^*\quad\forall\, J\in P(I).
\]
As before, if the initial state of the network is $\V\in\Sigma^*_J$, then, when the network enters in the firing regime at time ${\bar t}(\V)$, the neurons that fire are exactly those of the set $J$.
As a starting point to the study of the asymptotic dynamics of neural networks containing inhibitory neurons, we give here a detailed description of the partition $\mathcal{P}^*$. It will allow to show that the return map satisfies a general definition of piecewise contractive map.

\vspace{1ex}
First of all, some atoms of $\mathcal{P}^*$ are empty. Indeed, by Lemma \ref{JVAC}, if $J$ contains excitatory neurons and $J\neq I$, then $\Sigma^*_{J}=\emptyset$.
As a consequence, the possibly nonempty sets of $\mathcal{P}^*$ are: the set
$\Sigma^*_{I}$ of the initial states such that in the firing regime all the neurons fires, and the sets $\Sigma^*_{J}$ of the initial states such that in the firing regime only some inhibitory neurons fire. It follows that
\[
\Sigma^*=\bigcup_{J\in P\left(I^-\right)}\Sigma^*_{J}\bigcup\Sigma^*_{I},
\]
where $I^-\subset I$ is the set of the inhibitory neurons and $P(I^-)$ denotes the set of all the nonempty subsets of $I^-$.

\vspace{1ex}
Each set $\Sigma^*_{J}$ admits an explicit formulation which is useful to study the topological properties of $\mathcal{P}^*$.

Let $J\in P\left(I^-\right)$ and suppose the initial state of the network is $\V\in \Sigma^*_{J}$.
Since the firing of an inhibitory neuron cannot lead another neuron to fire,  if all the neurons of $J$ fire, it is because they all reach the threshold spontaneously at the same time ${\bar t}(\V)$.
In other words,  $t_i(\V)=\bar{t}(\V)$ for all $i\in J$ and $t_k(\V)>\bar{t}(\V)$ for all $k\notin J$. The initial potential of all the neurons of $J$ is thus
the same and it is larger than the potential of the other neurons. So, we can conclude that
\begin{equation}\label{SIGMAJINHIB}
\Sigma^*_{J} = \{\V \in \Sigma^*\ :\ V_i = V_j > V_k \quad
\forall\, i, j \in J \mbox{ and } \forall\, k \notin J\}\qquad
\forall\, J\in P(I^-).
\end{equation}

Now let $J=I$ and and suppose the initial state of the network is $\V\in\Sigma^*_{I}$. Then, either all the neurons reach spontaneously the threshold at the same time, or at least one excitatory neuron reaches the threshold before the others and induces the firing of all the others by avalanche. In both cases $t_i(\V)=\bar{t}(\V)$ for some $i$ in the set $I^{+}$ of the excitatory neurons. So, we can conclude that
\begin{equation}\label{SIGMAI}
\Sigma^*_{I} = \{\V \in \Sigma^*\ :\ \max_{i\in I^+} V_i \geq V_k
\quad \forall\, k \in I\}.
\end{equation}

\vspace{1ex}
Let us study the topological properties of the sets $\Sigma^*_{J}$. We consider the relative topology induced by $\R^{n}$ in $\Sigma$.

In the case where $J$ contains only one neuron, then it is an inhibitory neuron, and according to (\ref{SIGMAJINHIB})
\[
\Sigma^*_{\{i\}} = \{\V \in \Sigma^*\ :\ V_i > V_k \quad \forall\,
\, k \neq i\}\qquad \forall\, i\in I^-.
\]
A set $\Sigma^*_{\{i\}}$ is open, and its topological boundary, that we denote $\partial\Sigma^*_{\{i\}}$, is the set
\begin{equation}\label{BORDESIGMAJ}
\partial\Sigma^*_{\{i\}}=\{\V \in \Sigma^*\ :\ V_i \geq V_k \quad \forall\, k \in I \quad\mbox{and}\quad\exists\, k\neq i \ : \ V_i=V_k\}.
\end{equation}

Suppose now that $J\subset P(I^-)$ contains two or more neurons, and let $\V\in\Sigma^*_{J}$. From (\ref{SIGMAJINHIB}) it follows that $V_i\geq V_k$ for all $i\in J$  and $k\in I$, and $V_i=V_j$ for all $i,j \in J$. Therefore, $\V$ belongs to the intersection of the boundary of the sets $\Sigma^*_{\{i\}}$ such that $i\in J$.
We deduce that
\begin{equation}\label{J2BORDE}
\Sigma^*_{J}\subset \bigcup_{i\in J}\partial\Sigma^*_{\{i\}}\quad \forall J\subset P(I^-)\quad\text{such that}\quad\#J\geq 2.
\end{equation}

Finally, let us study the set $\Sigma^*_I$. According to (\ref{SIGMAI}), its interior $\overset{\circ}{\Sigma^*_{I}}$ is
\[
\overset{\circ}{\Sigma^*_{I}}=\{\V \in \Sigma^*\ : \ \max_{i\in I^+} V_i > V_k \quad \forall\, k \in I\},
\]
and its boundary is the subset of $\Sigma^*_I$ defined by
\begin{equation}\label{BORDESIGMAI}
\partial\Sigma^*_{I}=\{\V \in \Sigma^*\ :\
\max_{i\in I^+} V_i \geq V_k \quad \forall\, k\in I\quad\mbox{and}\quad\exists\, j\in I^- \ : \ V_j= \max_{i\in I^+} V_i\}.
\end{equation}
If $\V\in\partial\Sigma^*_{I}$, then there exists $j\in I^-$ such that $V_j \geq V_k$ for all $k\in I$, with equality for a $k\in I^+$ and thus different of  $j$. In other words $\V\in\partial\Sigma^*_{\{j\}}$ for a $j\in I^-$. It follows that
\[
\partial\Sigma^*_{I}\subset \bigcup_{i\in I^-}\partial\Sigma^*_{\{i\}}.
\]

To sum up, the previous analysis shows that $\Sigma^*$ can be decomposed as the union of $\#I^-+2$ pairwise disjoint sets: the open sets $\Sigma^*_{\{i\}}$ (one for each inhibitory neuron $i \in I^-$),
the interior of $\Sigma^*_{I}$, and the closed set with empty interior $\partial{\mathcal P}^*$.
This last set is the union of $\partial\Sigma^*_I$ with the sets $\Sigma^*_J$ such that $J$ contains only inhibitory neurons and $\#J\geq 2$. Moreover, it coincides with the union of the topological boundaries of the open sets $\Sigma^*_{\{i\}}$. So, we have
\begin{equation}\label{DECOMPSIGMA*}
\Sigma^*= \bigcup_{i\in I^-}\Sigma^*_{\{i\}} \bigcup \overset{\circ}{\Sigma^*_I}\bigcup\partial\mathcal{P}^*\qquad
\mbox{where}\qquad
\partial\mathcal{ P}^*:=\bigcup_{i\in I^-}\partial\Sigma^*_{\{i\}}.
\end{equation}
Each set of the type $\Sigma^*_{\{i\}}$ consists of the initial states of the network for which, in the firing regime, only the inhibitory neuron $i$ fires. The set $\overset{\circ}{\Sigma^*_{I}}$ are the initial states of the networks for which only excitatory neurons (one or several) reach spontaneously  the threshold. According to Lemma \ref{JVAC}, the firing of these excitatory neurons   induces the synchronization of the network. Finally, for the initial states $\V\in\partial{\mathcal P}^*$, at least two neurons reach the threshold spontaneously and at least one is inhibitory. If they are all inhibitory, then only these neurons fire (it corresponds to the case where $\V\in\Sigma^*_J$, with $\#J>1$), but if one of them is excitatory, its firing induces the synchronization of the network
(it corresponds to the case where $\V\in\partial\Sigma^*_I$).

\vspace{1ex}
The return map is continuous in $\overset{\circ}{\Sigma^*_{I}}$ and in any $\Sigma^*_{\{i\}}$, because each of these sets is open, and contained in the interior of a set $\Sigma_J$ where $\rho$ is continuous (see (\ref{RETURN})). So, the boundary set $\partial{\mathcal P}^*$ contains all the discontinuity points of the return map. Let us give an intuitive idea explaining why those discontinuities appear. For an initial state $\V \in \partial{\mathcal P}^*$ at least two neurons fire in the firing regime, say $i\in I^-$ and $j\in I$. But, there exists an arbitrarily small perturbation $\W$ of $\V$ that belongs to $\Sigma^*_{\{i\}}$, i.e. such that only the neuron $i$ fires in the firing regime. This implies the existence of a gap between $\rho(\V)$ and $\rho(\W)$, since
$\rho_j(\V)=0$ and $\rho_j(\W)\neq 0$, except for a very specifics value of $H_{ij}$. The following lemma makes rigorous this intuitive idea.

\begin{Lemma}\label{lemacontinuidad1} If $\V\in {\partial{\cal P}^*}$, then  there exists a
sequence $\{\U^{m}\}_{m\in\N}$ of points of $\Sigma^*$ such that $\lim\limits_{m\to\infty}\U^m=\V$ and
$\lim\limits_{m\to\infty}\|\rho(\U^m) - \rho (\V)\| \geq\mu$ where
\begin{equation}\label{MU}
\mu:=\min\{|\alpha|,\min\limits_{i\neq j}|H_{ij}+\theta|\}.
\end{equation}
\end{Lemma}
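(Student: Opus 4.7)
The plan is to exploit the jump in the set of firing neurons across the boundary $\partial\mathcal{P}^*$. Since $\V\in\partial\mathcal{P}^*$, the decomposition (\ref{DECOMPSIGMA*}) produces some inhibitory neuron $i\in I^-$ with $\V\in\partial\Sigma^*_{\{i\}}$. The set $\Sigma^*_{\{i\}}$ is open in $\Sigma^*$ (it is the intersection of $\Sigma^*$ with the open half-space $\{V_i>V_k\ \forall\,k\neq i\}$), so $\V\in\overline{\Sigma^*_{\{i\}}}$ and there exists a sequence $\{\U^m\}\subset\Sigma^*_{\{i\}}\subset\Sigma^*$ with $\U^m\to\V$. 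By (\ref{BORDESIGMAJ}) I fix an index $k\neq i$ satisfying $V_k=V_i=\max_l V_l$; this is the coordinate along which the jump will be detected.

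Next, I evaluate $\rho_k$ on both sides. Since $V_i$ and $V_k$ are both maximal, neurons $i$ and $k$ reach the threshold simultaneously at $\overline t(\V)$, hence $\{i,k\}\subset J_0(\V)\subset J(\V)$ and the return-map formula (\ref{RETURN}) yields $\rho_k(\V)=0$. For $\U^m\in\Sigma^*_{\{i\}}$ only neuron $i$ fires spontaneously; by Dale's principle (H4) an inhibitory neuron triggers no avalanche, so $J(\U^m)=\{i\}$, $\overline t(\U^m)=t_i(\U^m)$, and
\[
\rho_k(\U^m)=\max\bigl\{\alpha,\ \phi_k^{t_i(\U^m)}(\U^m)+H_{ik}\bigr\}.
\]
Substituting (\ref{PHITV}) gives $\phi_k^{t_i(\U^m)}(\U^m)=\beta-(\beta-U^m_k)(\beta-\theta)/(\beta-U^m_i)$, which is continuous in a neighbourhood of $\V$ (since $V_i\leq\bar c<\beta$) and converges, using $V_k=V_i$, to $\beta-(\beta-\theta)=\theta$. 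Therefore $\rho_k(\U^m)\to\max\{\alpha,\theta+H_{ik}\}$.

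A short case analysis on the sign of $\theta+H_{ik}$ relative to $\alpha<0$ then yields
\[
\bigl|\max\{\alpha,\theta+H_{ik}\}\bigr|\geq\min\{|\alpha|,\,|\theta+H_{ik}|\}\geq\mu,
\]
the last inequality being the very definition of $\mu$. Since $\|\rho(\U^m)-\rho(\V)\|\geq|\rho_k(\U^m)-\rho_k(\V)|=|\rho_k(\U^m)|$, passing to the limit delivers the required bound. The only delicate step is the first one, namely guaranteeing that the approximating sequence can be taken inside $\Sigma^*$; this is immediate from the topological interpretation of $\partial$ in the relative topology of $\Sigma^*$, while the remainder reduces to the continuity of $\phi^t$ and a one-line algebraic identity.
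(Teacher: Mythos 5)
Your argument is correct and follows essentially the same route as the paper: pick $i\in I^-$ with $\V\in\partial\Sigma^*_{\{i\}}$, approximate $\V$ by points of the open piece $\Sigma^*_{\{i\}}$, use (\ref{PHITV}) to compute $\lim_m\rho_k(\U^m)=\max\{\alpha,\theta+H_{ik}\}$ at a coordinate $k\neq i$ with $V_k=V_i$, and compare with $\rho_k(\V)=0$. The only (harmless) cosmetic difference is that you justify $k\in J(\V)$ directly from $V_k=V_i$ being maximal (so $k\in J_0(\V)$), whereas the paper reads it off from the atom $\Sigma^*_J$ containing $\V$.
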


\begin{proof} Let $\V\in\partial\mathcal{P}^*$ and $i\in I^-$ such that
$\V\in\partial\Sigma^*_{\{i\}}$. By definition of boundary, there is a sequence $\{\U^{m}\}_{m\in\N}$ of points of $\Sigma^*_{\{i\}}$ such that $\lim\limits_{m\to\infty} \U^m=\V$. For all $k\neq i$,
we have
\begin{eqnarray*}
\lim_{m\to\infty}\rho_k(\U^{m})
&=& \lim_{m\to\infty}\max\{\alpha, \; \beta - \frac{(\beta - U^{m}_k)( \beta -\theta)}
{\beta - U^{m}_i} + H_{ik} \}\\
&=& \max\{\alpha, \; \beta - \frac{(\beta- V_k)(\beta - \theta)}{\beta - V_i} + H_{ik}\}.
\end{eqnarray*}

On the other hand, if $\V\in\partial\Sigma^*_{\{i\}}$, it belongs to a set $\Sigma^*_J$ such that $i\in J$ and $\# J\geq 2$ (possibly $J=I$). Therefore, there is $j\neq i$ in $J$ such that $V_i=V_j$ and it follows that $\lim\limits_{m\to\infty}\rho_j(\U^{m})=\max\{\alpha,\; H_{ij} + \theta\}$. As $j\in J$ and $\V\in\Sigma^*_{J}$, we have $\rho_j(\V)=0$, and we obtain that
\[
\lim_{m\to\infty}\|\rho(\V)-\rho(\U^m)\|\geq \lim_{m\to\infty}|\rho_j(\V)-\rho_j(\U^m)|=|\max\{\alpha,\; H_{ij} +
\theta\}|
\]
which leads to the desired inequality.
\qed
\end{proof}

To conclude this section we give a general definition of a piecewise contractive map and we show that the return map fulfill this definition.



\begin{Definition}\label{DEFCONTRACTIVEMAP}{(Piecewise contractive map)} \em A map $\rho$, from a compact set $\Sigma^*\subset\R^n$ to itself, is said to be piecewise contractive  if it satisfies the following three conditions:

\vspace{1ex}
\noindent
1) There exists a finite family $\Sigma^*_{\{0\}},\Sigma^*_{\{1\}},\dots,\Sigma^*_{\{p\}}$ of pairwise disjoint open sets such that
\[
\Sigma^* = \bigcup\limits_{i=0}^p \overline{\Sigma^*_{\{i\}}}.
\]

\vspace{1ex}
\noindent 2) There exists a constant $\lambda<1$ such that for all $i \in \{0,1,\dots,p\}$
\[
\|\rho(\V)-\rho(\W)\|\leq \lambda\|\V-\W\|\quad\forall\, \V,\W\in \Sigma^*_{\{i\}}.
\]

\vspace{1ex}
\noindent 3) The map $\rho$ has a discontinuity jump larger than a constant $\mu>0$ in the borders
$\partial\Sigma^*_{\{i\}}$ of a the sets $\Sigma^*_{\{i\}}$. \em
\end{Definition}

Under the hypothesis (H1), (H2), (H3) and (H4), and the additional generic hypothesis
\[
\min\limits_{i\neq j}|H_{ij}+\theta|\neq 0,
\]
which stipulates that the smallest inhibitory interaction is different from $-\theta$, the return map of the network is piecewise contractive.

First, by Proposition \ref{INVARIANT}, the set $\Sigma^*$ is compact and $\rho(\Sigma^*)\subset\Sigma^*$. Second, if we denote $\Sigma^*_{\{0\}}$ the interior of $\Sigma^*_{I}$ and $p$ the number of inhibitory neurons, the decomposition (\ref{DECOMPSIGMA*}) of $\Sigma^*$ shows that the condition 1) is satisfied. Third, the condition 2) is also satisfied because $\Sigma^*$ is a contractive zone with respect to the partition $\mathcal{P}$, and thus it is also a contractive zone with respect to the partition $\mathcal{P}^*$. Finally, the Lemma \ref{lemacontinuidad1} shows that 3) is true for $\mu:=\min\{|\alpha|,\min\limits_{i\neq j}|H_{ij}+\theta|\}$,
since $\alpha<0$ by Hypothesis (H1) and $\min\limits_{i\neq j}|H_{ij}+\theta|\neq 0$.

\vspace{1ex}
The results of the sequel of this section apply to any abstract piecewise contractive map satisfying  Definition \ref{DEFCONTRACTIVEMAP}. So, they apply in particular  to the return map under the hypothesis mentioned above. To simplify the notations, and to fit with those of Definition \ref{DEFCONTRACTIVEMAP}, we will denote $\Sigma^*_{\{0\}}$ the interior of $\Sigma^*_I$. It does not mean that a new neuron $0$ is introduced in the network, it rather recall that if $\V\in\Sigma^*_{\{0\}}$ then $\rho(\V)=0$. Also, $I_0$ will denote the set
$\{0,1,\dots,\#I^-\}$.

\subsection{The stable and the sensitive sets.}\label{STABLANDCAOTIC}

In order to study the asymptotic dynamics of the return map, we divide the contractive zone $\Sigma^*$ in two complementary sets: the
stable set $S$ and the sensitive set $C=\Sigma^*\setminus S$.

\begin{Definition}\label{STABLEANDDELTASTABLE}{(Stable set)} \em A point $\V\in\Sigma^*$ is  stable, if for all $
\nu>0$ there exists $\delta>0$ such that for all $p\in\N$:
\begin{equation}\label{DEFESTABLE}
\mbox{if}\quad \|\rho^p(\V) - \W\|<\delta  \quad\mbox{ then }\quad
\|\rho ^k(\rho^p(\V)) - \rho ^k(\W)\|<\nu \quad\forall\, k\geq 1.
\end{equation}
We call stable set and denote $S$ the set of all the stable points.
\end{Definition}
If $\V$ is a stable point, then any sufficiently small perturbation of $\V$, or of a point of its orbit, has an orbit which remains near the orbit of $\V$. For instance, if the initial state of a network is stable, one can expect that a small perturbation may modify slightly and temporary the inter-spike intervals, but would not change the firing neurons.


\begin{Definition}\label{CHAOTICSET}{(Sensitive set)} \em A point $\V\in\Sigma^*$ is sensitive, if there exists $\nu>0$ such that for all $\delta>0$ there are
$p\in\N$ and $\W\in\Sigma_c$ satisfying:
\[
\|\rho^p(\V) - \W\|< \delta  \quad \mbox{ and }\quad \|\rho ^k(\rho^p(\V)) - \rho ^k(\W)\|\geq\nu \quad\mbox{for
some}\quad k\geq 1.
\]
We call sensitive set and denote $C$, the set of all the sensitive points.
\end{Definition}
The sensitive set is the set of the points that are not stable. For a network in a sensitive state, there exit arbitrarily small perturbations that produce a consequent change of its time evolution. Typically, such perturbations modify the firing neurons at an instant of its evolution.

\begin{Lemma}\label{teoremaSinvariante} The stable set $S$ is forward invariant, i.e $\rho(S) \subset S$, and
the sensitive set $C$ is backward invariant, i.e $\rho^{-1}(C) \subset C$.
\end{Lemma}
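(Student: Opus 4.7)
\textbf{Proof plan for Lemma \ref{teoremaSinvariante}.}

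The two statements are in fact logically equivalent, which I would point out first to cut the work in half. Since $C=\Sigma^{\ast}\setminus S$ by definition and $\rho(\Sigma^{\ast})\subset\Sigma^{\ast}$, the inclusion $\rho(S)\subset S$ says exactly that $\V\in S\Rightarrow\rho(\V)\in S$; taking the contrapositive gives $\rho(\V)\in C\Rightarrow\V\in C$, which is precisely $\rho^{-1}(C)\subset C$. So it suffices to establish forward invariance of $S$.

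To prove $\rho(S)\subset S$, I would fix $\V\in S$ and show that $\rho(\V)$ satisfies Definition \ref{STABLEANDDELTASTABLE}. Given $\nu>0$, the stability of $\V$ furnishes a $\delta>0$ such that for every $q\in\N$ and every $\W\in\Sigma^{\ast}$ with $\|\rho^{q}(\V)-\W\|<\delta$ one has $\|\rho^{k}(\rho^{q}(\V))-\rho^{k}(\W)\|<\nu$ for all $k\geq 1$. I then claim the same $\delta$ witnesses the stability of $\rho(\V)$ at the level $\nu$: indeed, for any $p\in\N$ and any $\W$ with $\|\rho^{p}(\rho(\V))-\W\|<\delta$, the identity $\rho^{p}(\rho(\V))=\rho^{p+1}(\V)$ lets us apply the stability of $\V$ with $q:=p+1\in\N$, yielding $\|\rho^{k}(\rho^{p+1}(\V))-\rho^{k}(\W)\|<\nu$ for every $k\geq 1$, i.e.\ $\|\rho^{k}(\rho^{p}(\rho(\V)))-\rho^{k}(\W)\|<\nu$. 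Hence $\rho(\V)\in S$.

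The proof is essentially a direct unwinding of the definitions, so there is no genuine obstacle; the one small point to be careful about is that the definition of stability quantifies over all $p\in\N$ (including $p=0$), and we need the witness $q=p+1$ to stay inside the allowed range, which it trivially does. No appeal to the piecewise contractive structure, to Hypothesis (H1)--(H4), or to any metric property of $\rho$ is required — the statement is a purely dynamical/topological consequence of how the stable set is defined.
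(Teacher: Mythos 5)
Your argument is correct and is essentially the paper's own proof: both establish $\rho(S)\subset S$ by reindexing the quantifier in Definition \ref{STABLEANDDELTASTABLE} (the paper writes $\rho^{p}(\V)=\rho^{p-1}(\rho(\V))$ where you write $\rho^{p}(\rho(\V))=\rho^{p+1}(\V)$, which is the same observation), and both obtain $\rho^{-1}(C)\subset C$ by passing to complements in $\Sigma^{*}$. No discrepancy to report.
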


\begin{proof} Let us prove that $\rho(S) \subset S$. Take $\V \in S$. Then for all $\nu>0$ there exists $\delta>0$ such that for all $p\geq 1$
\[
\mbox{if}\quad\|\rho^{p-1}({\bf \rho(\V)}) -\W \| < \delta   \quad\mbox{ then }\quad \|\rho^k(\rho ^{p-1}
({\rho(\V)})) -\rho^k(\W)\| \leq \nu \qquad \forall k \geq 1.
\]
The last assertion is the definition of stable point applied to $\rho(\V)$. Therefore $\rho(S) \subset S$. As $C$ is the complement in $\Sigma^*$ of $S$, and $\rho(S)\subset S$  we have $\rho^{-1}(C) \subset C$. \qed
\end{proof}
Lemma \ref{teoremaSinvariante} proves that if a network is initially in a stable state then it is forever stable.
If it is in a sensitive state, its past states are sensitive, but its future states can be stable.

\vspace{1ex}
Now, we give a useful characterization of the stable set:

\begin{Lemma}\label{MORTAL} Let us denote $d$ the distance induced by the norm $\|\cdot\|$, that
is $d(\V,\W)=\|\V-\W\|$ for all $\V$ and $\W$ in $\Sigma^*$. For all $\eta>0$, let
$S_\eta$ be the set of the stable points whose orbit is at a distance larger than $\eta$ of $\partial\mathcal P^*$:
\[
S_\eta:=\{\V\in S\ :\ d(\rho^p(\V),\partial\mathcal{P}^*)\geq\eta\quad\forall\,p\in\N \}
\qquad\forall\eta>0
\]
then,
\begin{equation}\label{DECOMPS}
S=\bigcup_{\eta>0}S_\eta.
\end{equation}
\end{Lemma}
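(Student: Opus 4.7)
The plan is to prove the two inclusions separately. The inclusion $\bigcup_{\eta>0} S_\eta \subseteq S$ is immediate from the definition of $S_\eta$, so all the work lies in the reverse inclusion $S \subseteq \bigcup_{\eta>0} S_\eta$. I would proceed by contrapositive: if the forward orbit of $\V\in\Sigma^*$ is not bounded away from $\partial\mathcal{P}^*$ by any positive $\eta$, then $\V$ cannot be stable, because Lemma \ref{lemacontinuidad1} would produce arbitrarily small perturbations of an iterate of $\V$ that cause a discontinuity jump of size at least $\mu/4$ in $\rho$.

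More concretely, suppose $\V\in S$ fails to belong to every $S_\eta$. Then there exists a sequence $p_n\in\N$ such that $\epsilon_n:=d(\rho^{p_n}(\V),\partial\mathcal{P}^*)\to 0$. Since $\partial\mathcal{P}^*$ is closed inside the compact set $\Sigma^*$, I may select $\V'_n\in\partial\mathcal{P}^*$ realizing this infimum. Lemma \ref{lemacontinuidad1} applied at $\V'_n$ delivers points $\U^n\in\Sigma^*$ arbitrarily close to $\V'_n$ with $\|\rho(\U^n)-\rho(\V'_n)\|\geq \mu/2$; choosing $\U^n$ within $\epsilon_n$ of $\V'_n$ ensures $\|\U^n-\rho^{p_n}(\V)\|\leq 2\epsilon_n\to 0$.

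Now the triangle inequality
\[
\|\rho(\U^n)-\rho(\V'_n)\|\leq\|\rho(\U^n)-\rho(\rho^{p_n}(\V))\|+\|\rho(\rho^{p_n}(\V))-\rho(\V'_n)\|
\]
forces at least one of the two terms on the right to be $\geq \mu/4$. Let $\W^n$ be whichever of $\U^n$ or $\V'_n$ realizes this bound: in either case $\|\W^n-\rho^{p_n}(\V)\|\leq 2\epsilon_n$, while $\|\rho(\W^n)-\rho(\rho^{p_n}(\V))\|\geq \mu/4$. Setting $\nu:=\mu/4$ in Definition \ref{STABLEANDDELTASTABLE} and using $k=1$ then contradicts stability of $\V$, since for any proposed $\delta>0$ it suffices to take $n$ large enough that $2\epsilon_n<\delta$.

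The only subtle point — which is why the triangle-inequality dichotomy is needed — is that Lemma \ref{lemacontinuidad1} produces jump sequences based at boundary points $\V'_n\in\partial\mathcal{P}^*$, whereas the iterates $\rho^{p_n}(\V)$ generically lie in the open pieces of $\mathcal{P}^*$. Splitting the jump between $\U^n$ and $\V'_n$ into a jump involving either $\rho^{p_n}(\V)$ and $\U^n$ or $\rho^{p_n}(\V)$ and $\V'_n$ transfers the discontinuity to a perturbation of a point actually on the orbit of $\V$, which is exactly what the definition of stability requires.
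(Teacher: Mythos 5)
Your proof is correct and follows essentially the same route as the paper: both argue by contradiction, approximate a near-boundary iterate $\rho^{p}(\V)$ by a point of $\partial\mathcal{P}^*$, invoke Lemma \ref{lemacontinuidad1} to produce a jump of size at least $\mu/2$ nearby, and use a triangle inequality to transfer a jump of size $\mu/4$ to a perturbation of the orbit point, contradicting stability (Definition \ref{STABLEANDDELTASTABLE} with $k=1$, $\nu=\mu/4$). The only cosmetic difference is that the paper first applies the stability of $\V$ to force the term $\|\rho(\rho^{p_0}(\V))-\rho(\U)\|$ to be small and then uses the reverse triangle inequality, whereas you use an either/or dichotomy on the two triangle terms; the two arguments are equivalent.
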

\begin{proof} By definition, $S_{\eta}\subset S$ for any $\eta>0$. Now, let $\V\in S$ and suppose by contradiction that $\V\notin S_\eta$ for all $\eta>0$. Let $\delta>0$ and $\mu$ be the discontinuity jump of $\rho$ in $\partial\mathcal{P}^*$ defined in (\ref{MU}).
As $\V$ is a stable point, there exits $0<\delta_0<\delta$ such that for all $p\in\N$
\[
\text{if}\quad \|\rho^p(\V)-\U\|<\delta_0\quad\text{then}\quad \|\rho(\rho^p(\V))-\rho(\U)\|<\frac{\mu}{4}.
\]
As $\V\notin S_{\delta_0/2}$, there exist $p_0$ and $\W\in\partial\mathcal{P}^*$ such that
\[
\|\rho^{p_0}(\V)-\W\|<\frac{\delta_0}{2}<\delta.
\]
By Lemma \ref{lemacontinuidad1}, there exist $\U\in\Sigma^*$  such that
\[
\|\W-\U\|<\frac{\delta_0}{2}\quad\text{and}\quad \|\rho(\W)-\rho(\U)\|>\frac{\mu}{2}.
\]
For this $\U$, we have $\|\rho^{p_0}(\V)-\U\|\leq \|\rho^{p_0}(\V)-\W\|+\|\W-\U\|<\delta_0$. Thus,
\[
\|\rho(\rho^{p_0}(\V))-\rho(\W)\|\geq \left|\|\rho(\W)-\rho(\U)\|-\|\rho(\rho^{p_0}(\V))-\rho(\U)\| \right|
\geq \left|\frac{\mu}{2}-\frac{\mu}{4}\right|=\frac{\mu}{4}.
\]
To sum up, we have shown the existence of a $\nu:=\mu/4$ such that for an arbitrary $\delta>0$, there exists $p_0$ and $\W$ such that
\[
\|\rho^{p_0}(\V)- \W\|<\delta\quad\text{and}\quad \|\rho(\rho^{p_0}(\V))-\rho(\W)\|\geq\nu.
\]
It follows that $\V$ belongs to the sensitive set, which is a contradiction. We conclude that for all $\V\in S$ there exists $\eta>0$ such that $\V\in S_\eta$.\qed
\end{proof}
Note that each set $S_\eta$ is forward invariant and does not contain point of $\partial\mathcal{P}^*$. In other words,
\begin{equation}\label{INCLUSIONSNUDELTA}
\rho(S_\eta)\subset S_\eta\quad\text{and}\quad S_{\eta}\subset\bigcup\limits_{i\in I_0}\Sigma^*_{\{i\}}\quad\forall\,\eta>0.
\end{equation}
and by (\ref{DECOMPS}) the same relations are true for $S$. It follows in particular that $\rho$ is continuous in any $S_{\eta}$ and in $S$. Also $\partial\mathcal{P}^*$ is a subset of the sensitive set. The relations (\ref{INCLUSIONSNUDELTA}) show that if the initial state of the network is stable, then either it get synchronized, if its orbit visit $\Sigma^*_{\{0\}}$, or only inhibitory neurons fire and never at the same time. The situation where various inhibitory neurons fire simultaneously occurs only when the network is in a sensitive state.


\subsection{Principal results on the asymptotic dynamics}\label{PERIODSYST}


In this subsection we prove Theorem \ref{teoremaPrincipal} which states that the limit sets attracting the stable points is only composed of limit cycles. The number of limits cycle can be finite or infinite, but it is always countable. The proof essentially relies on the contraction property of the return map. Since the map is not continuous, it is not possible to apply directly a classic fixed point theorem for contractive maps. Nevertheless, we give a generalization of the Banach fixed point theorem for piecewise contractive maps. It states the existence of periodic orbits - maybe of periods different from one and not necessarily unique - that attract all the stable points.

\begin{Definition}{($\omega$-limit set)}\label{DEFOMEGALIM} \em The $\omega$-limit set $\omega(\V)$
of a point $\V\in\Sigma^*$ is the set of the limit points of the future orbit of $\V$, that is:
\[
\omega(\V) = \{\W \in \Sigma^* \ :\  \exists\, {\{p_k\}}_{k\in\N} \ :\  \lim_{k\to\infty} p_k = +\infty \mbox{ and }
\lim_{k\to\infty}\rho^{p_k}(\V)=\W\}.
\]
For a set $A\subset\Sigma^*$, we denote $\omega(A)$ the union of all the $\omega$-
limit sets of the points of $A$. That is,
\[
\omega(A):=\bigcup_{\V\in A}\omega(\V).
\]
\em
\end{Definition}
In other words, $\W$ belongs to the $\omega$-limit set of $\V$ if there exists a subsequence of the orbit
of $\V$ which converges to $\W$. As the phase space $\Sigma^*$ is compact, the $\omega$-limit set of any point is nonempty. Intuitively, the $\omega$-limit set of $\V$ is the set of the points of $\Sigma^*$ for which there exists an arbitrarily small neighborhood which is always visited by the orbit of $\V$. In this sense, the points of $\omega(\V)$ are those which attract the orbit of $\V$. The set $\omega(\V)$ is forward invariant if $\rho$ is continuous in $\omega(\V)$, which is not necessarily the case for any $\V$. Nevertheless, the $\omega$-limit set is the same for all the points of a same orbit.

\begin{Definition} {(Limit cycle)} \em A set $L \subset\Sigma^*$ is a limit cycle, if it is a periodic orbit whose basin of attraction
\[
\mathcal{B}(L) = \{\W \in \Sigma^*\ :\ \omega (\W) = L\}
\]
contains an open neighborhood of $L$.
\end{Definition}

Now we state the two important results of this section:

\begin{Proposition}\label{CYCLESETA} For any $\eta>0$ such that $S_\eta\neq\emptyset$, the set
$S_\eta$ contains only a finite number $N(\eta)$ of limit cycles, and any point of $S_\eta$
belongs to the basin of attraction of one of these limit cycles. In other words, for all $\eta>0$
\[
\omega(S_{\eta})=\bigcup_{i=1}^{N(\eta)} L_i,
\]
where each set $L_i$ is a limit cycle included in $S_{\eta}$.
\end{Proposition}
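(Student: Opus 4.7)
My plan is to exploit two features of $S_\eta$ simultaneously: by its definition, $S_\eta$ avoids the discontinuity locus $\partial\mathcal{P}^*$ by a margin $\eta$, so $\rho$ is continuous on $S_\eta$ and is $\lambda$-contracting on each open piece $\Sigma^*_{\{i\}}$ it meets; and $S_\eta$ is forward invariant by the inclusions (\ref{INCLUSIONSNUDELTA}). The crucial geometric lemma I shall establish first is that if $\V, \W \in S_\eta$ satisfy $\|\V - \W\| < \eta$, then $\V$ and $\W$ belong to the same atom of $\mathcal{P}^*$. The reason lies in the explicit inequalities (\ref{SIGMAJINHIB}) and (\ref{SIGMAI}) defining each $\Sigma^*_{\{i\}}$: since we use the supremum norm, requiring $\V$ to be at distance $\geq \eta$ from every defining equality forces these inequalities to hold with margin at least $2\eta$, and a perturbation of size smaller than $\eta$ cannot flip any of them. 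Combined with forward invariance, this delivers the iterated contraction $\|\rho^k(\V) - \rho^k(\W)\| \leq \lambda^k \|\V - \W\|$ for all $k\in\N$ whenever $\V, \W \in S_\eta$ and $\|\V - \W\| < \eta$.

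Next I shall show that every orbit in $S_\eta$ is asymptotically periodic and converges to a limit cycle. By compactness of $\Sigma^*$, for any $\V \in S_\eta$ there exist $p < q$ with $\|\rho^p(\V) - \rho^q(\V)\| < \eta$; writing $\U := \rho^p(\V)$ and $r := q - p$, the contraction above gives $\|\rho^k(\U) - \rho^{k+r}(\U)\| \leq \lambda^k \|\U - \rho^r(\U)\| \to 0$. Hence for each $j \in \{0, \ldots, r-1\}$ the sequence $\{\rho^{j+mr}(\U)\}_{m \geq 0}$ is Cauchy, with some limit $Z_j \in \overline{S_\eta}$. Since $\overline{S_\eta}$ still lies at distance $\geq \eta$ from $\partial\mathcal{P}^*$ (the distance to a closed set is continuous), each $Z_j$ sits in the interior of a single piece of $\mathcal{P}^*$; hence $\rho$ is continuous at $Z_j$ and $\rho(Z_j) = Z_{j+1 \bmod r}$. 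Thus $\omega(\V) = \{Z_0, \ldots, Z_{r-1}\}$ is a single periodic orbit $L$. Applied now to points in an open neighborhood of $L$ that is small enough to lie inside the same continuity piece as the corresponding $Z_j$, the contraction shows that this neighborhood sits in $\mathcal{B}(L)$, so $L$ is a limit cycle.

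Finally I argue finiteness. If two distinct limit cycles $L \neq L'$ both lie in $S_\eta$ and contain points $Z \in L$, $Z' \in L'$ with $\|Z - Z'\| < \eta$, then iterating over $r = \mathrm{lcm}(\mathrm{per}(L), \mathrm{per}(L'))$ steps yields $\|Z - Z'\| = \|\rho^r(Z) - \rho^r(Z')\| \leq \lambda^r \|Z - Z'\|$, forcing $Z = Z'$ and so $L = L'$, a contradiction. Distinct limit cycles in $S_\eta$ are therefore pairwise $\eta$-separated, and the compactness of $\overline{S_\eta}\subset \Sigma^*$ bounds their number by some $N(\eta) < \infty$. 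Together with the previous paragraph, this establishes $\omega(S_\eta) = \bigcup_{i=1}^{N(\eta)} L_i$.

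The main technical obstacle is the first geometric lemma, that close points of $S_\eta$ share a piece, because it is the one step where the explicit form of the partition $\mathcal{P}^*$ must be inspected and where the choice of the supremum norm is used crucially. Once it is in hand, the rest reduces to standard compactness and Cauchy-sequence arguments applied to what is effectively a continuous contraction on a compact space.
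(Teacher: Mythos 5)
Your argument is correct in substance, but it is organized quite differently from the paper's proof, and the comparison is worth making explicit. The paper works with the nested \emph{atoms} $A_{i_1\dots i_k}=F_{i_k}\circ\dots\circ F_{i_1}(\Sigma^*_{\{i_1\}})$ of $S_\eta$: Lemma \ref{PROPATOMS} shows every orbit of $S_\eta$ travels through atoms whose diameters decay like $\lambda^{k-1}$, Lemma \ref{INUTIL} shows that once $d_k<\eta/2$ each atom closure sits inside a single continuity piece, and then a pigeonhole on the \emph{finite} family $\mathcal{A}_{\tilde k}$ plus the Banach-type Lemma \ref{FANTOM} produces the periodic orbits, bounds their number by $\#\mathcal{A}_{\tilde k}$, and shows each is a limit cycle. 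You replace the atom machinery by a local statement at scale $\eta$ (two points of $S_\eta$ within $\eta$ of each other lie in the same piece, hence the contraction iterates), and then get asymptotic periodicity orbit by orbit via compactness-recurrence and a Cauchy-sequence argument, and finiteness via the observation that distinct cycles in $S_\eta$ are pairwise $\eta$-separated. Both proofs rest on the same two pillars (the $\eta$-margin from $\partial\mathcal{P}^*$ forces nearby points to share a piece; the piecewise contraction then propagates along forward-invariant $S_\eta$), but your route is leaner and avoids the symbolic bookkeeping, while the paper's atom count gives for free the uniform bound (uniform in the initial point of $S_\eta$) on transient times and periods exploited in Remark \ref{TRANSITCYCLE}; your recurrence time $q-p$ is a priori orbit-dependent, though a uniform version could be recovered from an $\eta/2$-net of $\Sigma^*$.

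Two points deserve tightening. First, your justification of the key lemma (``distance $\geq\eta$ from every defining equality forces the inequalities of (\ref{SIGMAJINHIB})--(\ref{SIGMAI}) to hold with margin $2\eta$'') is the one imprecise step: $\partial\mathcal{P}^*$ is the intersection of those equality loci \emph{with} $\Sigma^*$, so passing from $d(\V,\partial\mathcal{P}^*)\geq\eta$ to a coordinate margin requires knowing that a suitable witness point lies in $\Sigma^*$. The cleaner statement is the one used in Lemma \ref{INUTIL}: for $\V\in S_\eta$ in the piece $\Sigma^*_{\{i\}}$, $d(\V,\Sigma^*\setminus\Sigma^*_{\{i\}})=d(\V,\partial\Sigma^*_{\{i\}})\geq d(\V,\partial\mathcal{P}^*)\geq\eta$, which directly yields your ``same piece'' conclusion; this carries the same implicit geometric content as your claim, so it is not a gap relative to the paper, but you should phrase it that way rather than via the $2\eta$ margin. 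Second, the proposition asserts the limit cycles are contained in $S_\eta$; you only place the limits $Z_j$ in $\overline{S_\eta}$. This is easily repaired: each $Z_j$ lies at distance $\geq\eta$ from $\partial\mathcal{P}^*$, so by your own local lemma and the contraction, any $\W$ with $\|\W-Z_j\|<\min\{\eta,\nu\}$ satisfies $\|\rho^k(\W)-\rho^k(Z_j)\|\leq\lambda^k\|\W-Z_j\|<\nu$ for all $k$, whence $Z_j$ is stable and indeed $Z_j\in S_\eta$; say so explicitly.
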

We postpone the proof of the proposition to Section \ref{PROOFCYCLESETA}. This proposition allow us to deduce the following theorem which is a more detailed statement of the part 2) of Theorem \ref{Theorem1}. As well as Proposition \ref{CYCLESETA}, this theorem is true for any piecewise contractive map satisfying Definition \ref{DEFCONTRACTIVEMAP}.

\begin{Theorem} \label{teoremaPrincipal} If $S\neq\emptyset$, any stable point belongs to the basin of attraction of a limit cycle contained in the stable set. The number of these limit cycles is countable. In other words,
\[
\omega(S)=\bigcup_{i=1}^N L_i,
\]
where each set $L_i$ is a limit cycle included in $S$. Moreover, the number $N$ of limit cycles is finite if $d(\omega(S),\partial\mathcal{P})>0$ and it is infinite, but countable, if $d(\omega(S),\partial\mathcal{P})=0$.
\end{Theorem}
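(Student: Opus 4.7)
The plan is to deduce Theorem \ref{teoremaPrincipal} directly from Proposition \ref{CYCLESETA} via the decomposition $S=\bigcup_{\eta>0}S_\eta$ of Lemma \ref{MORTAL}. Since the family $\{S_\eta\}_{\eta>0}$ is monotone ($S_\eta\subset S_{\eta'}$ when $\eta'<\eta$), I would first replace this uncountable union by the countable exhaustion $S=\bigcup_{k\in\N} S_{1/k}$. Applying Proposition \ref{CYCLESETA} to each nonempty $S_{1/k}$ yields a finite family of limit cycles $L^{(k)}_1,\dots,L^{(k)}_{N(1/k)}\subset S_{1/k}\subset S$ whose union is $\omega(S_{1/k})$. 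Taking the union over $k$ then gives
\[
\omega(S)=\bigcup_{k\in\N}\omega(S_{1/k})=\bigcup_{j\in\N} L_j,
\]
a countable union of limit cycles contained in $S$. Moreover, any $\V\in S$ belongs to some $S_{1/k}$ by Lemma \ref{MORTAL}, and therefore to the basin of attraction of one of these limit cycles by Proposition \ref{CYCLESETA}. This establishes the main assertion and the countability of $N$.

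For the dichotomy, the key auxiliary observation is that every limit cycle $L$ appearing in $\omega(S)$ is at strictly positive distance from $\partial\mathcal{P}^*$: by the construction above $L\subset S_{1/k}$ for some $k$, so $d(L,\partial\mathcal{P}^*)\geq 1/k>0$ by the very definition of $S_{1/k}$. If $d(\omega(S),\partial\mathcal{P}^*)=\eta_0>0$, then every limit cycle $L$ in the list satisfies $d(L,\partial\mathcal{P}^*)\geq\eta_0$, so $L\subset S_{\eta_0}$, and Proposition \ref{CYCLESETA} bounds the total number of such cycles by the finite integer $N(\eta_0)$; hence $N<\infty$. Conversely, if $N$ were finite, then $\omega(S)=L_1\cup\dots\cup L_N$ and $\eta_*:=\min_{1\leq j\leq N}d(L_j,\partial\mathcal{P}^*)$ would be a strictly positive minimum of finitely many positive numbers, giving $d(\omega(S),\partial\mathcal{P}^*)\geq\eta_*>0$; contrapositively, $d(\omega(S),\partial\mathcal{P}^*)=0$ forces $N=\infty$.

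The main technical point I anticipate spending care on is the reindexing in the countable union: the same limit cycle $L$ may be produced by Proposition \ref{CYCLESETA} as an $L^{(k)}_i$ for many different $k$, and after identifying repetitions one must verify that the resulting countable set of \emph{distinct} limit cycles still satisfies the lower bounds $d(L,\partial\mathcal{P}^*)\geq 1/k$ used in the dichotomy. This amounts to remarking that each $L$ has a single, well-defined distance to $\partial\mathcal{P}^*$, inherited from the smallest $k$ for which $L\subset S_{1/k}$, and that this distance is what enters the estimates above; the rest of the proof is a clean transfer of the finite-$S_\eta$ result of Proposition \ref{CYCLESETA} to the (possibly non-closed) set $S$ via monotone countable exhaustion.
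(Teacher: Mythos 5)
Your proposal is correct and takes essentially the same route as the paper: the countable exhaustion $S=\bigcup_{k}S_{1/k}$ from Lemma \ref{MORTAL} combined with Proposition \ref{CYCLESETA} yields the countable union of limit cycles and the basin property. Your handling of the dichotomy (putting every cycle directly into $S_{\eta_0}$ using that a periodic orbit is invariant and lies in $S$, and obtaining the infinite case by contraposition) is a minor, equally valid variant of the paper's argument, which instead shows $\omega(S)\subset S_{\eta}$ via invariance of $\omega(S)$ and treats the case $d(\omega(S),\partial\mathcal{P}^*)=0$ directly by accumulation of distinct points of $\omega(S)$ at $\partial\mathcal{P}^*$.
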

\begin{proof} By Lemma \ref{MORTAL} any stable point $\V$ belongs to a $S_\eta$ for some $\eta>0$. Therefore, by Proposition \ref{CYCLESETA}, it belongs to the basin of attraction of a limit cycle in $S_\eta\subset S$, that is $\omega(\V)$ is a limit cycle contained in $S$. As for all $\eta'>\eta$ we have $S_{\eta'}\subset S_\eta$ we can rewrite (\ref{DECOMPS}) as
\[
S=\bigcup_{k\in\N}S_{\frac{l}{k}}\quad\text{where}\quad l:=\max_{\V\in\Sigma^*}d(\V,\partial\mathcal{P}^*).
\]
Since for each $k\in\N$ the set $\omega(S_{l/k})$ is a finite union of limit cycles and $S$ is a countable union of the sets $S_{l/k}$, it follows that $\omega(S)$ is a countable union of limit cycles.

\vspace{1ex}
If $d(\omega(S),\partial\mathcal{P}^*)>0$, then there exists $\eta>0$ such that $d(\omega(S),\partial\mathcal{P}^*)\geq\eta$. Since the points of $\omega(S)$ are stable, the return map is continuous in $\omega(S)$ and therefore $\omega(S)$ is invariant. It follows that the orbit of any  point of $\omega(S)$ remains at a distance larger than $\eta$ of $\partial\mathcal{P}^*$. In other words, $\omega(S)\subset S_{\eta}$. By Proposition \ref{CYCLESETA} it implies that $\omega(\omega(S))$
is a finite union of limit cycles. But as $\omega(S)$ contains only periodic points we have
$\omega(\omega(S))=\omega(S)$. This show that if $d(\omega(S),\partial\mathcal{P}^*)>0$ then $\omega(S)$ is a finite union of limit cycle.

\vspace{1ex}
Now suppose $d(\omega(S),\partial\mathcal{P}^*)=0$. Then, either $\omega(S)\cap\partial\mathcal{P}^*\neq\emptyset$, or there exits an infinite sequence of different points of $\omega(S)$ which converges to a point of $\partial\mathcal{P}^*$. The first option is impossible since $\omega(S)\subset S$ and $\partial\mathcal{P}^*$ is a subset of the sensitive set. Therefore, the second option holds and $\omega(S)$ is infinite. This ends the proof, since in the other hand, we know that is it countable.
\qed
\end{proof}


Now we detail the consequences of the previous results on the dynamics of IF neural networks.

Theorem \ref{teoremaPrincipal} states that if the initial state $\V$ of a network is stable (i.e. $\V\in S$) then, when time tends to infinity, the network tends to fire periodically, since it is attracted by a limit cycle. If an excitatory neurons fires, this limit cycle is a synchronized orbit. But if no excitatory neuron fires, then the asymptotic firing patterns of each neuron present a common period but they are not synchronized (recall (\ref{INCLUSIONSNUDELTA}), two inhibitory neurons cannot fire at the same time). According to the initial stable state of the network, the attracting limit cycle can change. The stable set contains only a countable number of limit cycles and certainly, for most values of the parameters, this number is finite. However, it can also be infinite.

The situation described above does not prevent the stable dynamics of the network from exhibiting a behavior which seems chaotic, in a wide sense of the term. By definition of the stable set, any sufficiently small perturbation is damped and does not modify the asymptotic periodic pattern of the network. Nevertheless, the size of such perturbation depends on the set $S_{\eta}$ to which  the initial state of the network belongs. If $\eta$ is large, then large perturbations are
allowed. But if $\eta$ is small, then the orbit of $\V$ can get near to the discontinuity set $\partial\mathcal{P}^*$, or more generally, near to a sensitive point. in such a case some small perturbations (but non arbitrarily small) can change significantly the behavior of the network. The most probable effect is a change of the limit cycle attracting the network. Also, the network enters into a transitory regime before reaching this new limit cycle.

Moreover, the transitory time before the network gets near to a limit cycle and the period of this
limit cycle cannot decrease when $\eta$ decreases (see Remark \ref{TRANSITCYCLE}). In other words, the initial stable states whose orbits get the closer to the sensitive points have the larger transitory times. Besides, the limit cycles attracting them have the largest periods. It follows that those limit cycles may never be observed during a finite time of experimentation, since the network is in a transient regime and/or it tends to a limit cycle of very large period. This phenomenon  most probably occurs if the distance between $\omega(S)$ and $\partial\mathcal{P}^*$ is equal to zero. In such a case $\omega(S)$ contains an infinite number of limit cycles, and for any given number $\delta>0$, there is an infinity of limit cycles at a distance smaller than $\delta$ of $\partial\mathcal{P}^*$. Thus, most of the initial states of the network have an orbit which is attracted by one of these  limit cycles of large period. Furthermore, if a small perturbation is applied repeatedly to one of these orbits, for example by rounding errors in numerical simulations or in the presence of noise, it can maintain the orbit in the same region, but change many times the limit cycles it is attracted by.

\vspace{1ex}
Theorem \ref{teoremaPrincipal} and Proposition \ref{CYCLESETA} are results about the asymptotic dynamics of the stable set. However, Lemma \ref{MORTAL} allows us to make some comments about the dynamics of the complementary set, namely the sensitive set.
This lemma proves that any stable point belongs to a set $S_\eta$ for some $\eta>0$. Therefore, all the points of the orbit of a stable point $\V$ are at a distance larger than $\eta(\V)>0$ of $\partial\mathcal{P}^*$. It follows that the sensitive set consists of the points whose orbit is arbitrarily near to $\partial\mathcal{P}^*$. These points are those of $\partial\mathcal{P}^*$ and of all its pre-images $\bigcup\limits_{k\in\N}\rho^{-k}(\partial\mathcal{P}^*)$, as well as the points whose $\omega$-limit set contains a point of $\partial\mathcal{P}^*$. Formally,
\[
C=\bigcup\limits_{k\in\N}\rho^{-k}(\partial\mathcal{P}^*)\bigcup\mathcal{C}\quad\text{where}\quad
\mathcal{C}:=\{\V\in\Sigma^*\ :\ \omega(\V)\cap\partial\mathcal{P}^*\neq\emptyset\}.
\]
The points of $C$ that are not in $\mathcal{C}$ have an orbit which is eventually stable and Theorem \ref{teoremaPrincipal} applies for them after a transitory time. The set $\mathcal{C}$ is invariant, and for any
initial state $\V$ in this set, there exit arbitrarily small perturbations that produce a large deviation in its orbit. The invariance of $\mathcal{C}$ and its sensitivity to arbitrary small perturbations are ingredients of the concept of chaos. However, they are not enough to ensure the presence of chaos from a strict mathematical point of view. In particular, the existence of piecewise contractive maps exhibiting chaotic attractors is a widely open question. Also, the exact nature of the possible attractors of $\mathcal{C}$ has been few documented.
We can nevertheless mention \cite{CGMU}, where this question is investigated for piecewise affine contractive maps.
In this paper, it is shown that the attractor of $\mathcal{C}$ can have any type of cardinality (finite, countable infinite and uncountable). In particular, the most complex found attractor is a transitive union of a Cantor set and an infinite countable set, which does not contain any periodic orbit.

\vspace{1ex}
From the results of this section, it follows that the piecewise contractive character of the return map leads the stable dynamics to be asymptotically periodic. However, it does prevent the network from exhibiting a dynamics that can seem chaotic when observed at finite time scales. The richness of this dynamics depends on the interrelations between the stable set and the chaotic set. If the asymptotic sets of the stable dynamics remain far from the discontinuities of the return map, then numerical simulations should observe a stable periodic behavior. But, in the other case, this periodic behavior may be not observed at finite time scales and/or in presence of perturbations, for an important set of initial states of the network. Also, non periodic attractors exist in the sensitive set and can coexist with the periodic attractors of the stable set.

\subsection{Proof of Proposition \ref{CYCLESETA}}\label{PROOFCYCLESETA}
To prove Proposition \ref{CYCLESETA} we introduce the so-called \em atoms \em of a set $S_\eta$. For all $i\in I_0$, we define $F_i: P(\Sigma^*)\to P(\Sigma^*)$, where $P(\Sigma^*)$ denotes the set of all the subsets of $\Sigma^*$, by:
\[
F_i(E)=\rho(E\cap\Sigma^*_{\{i\}}\cap S_\eta)\qquad \forall\, E\subset\Sigma^*.
\]
Given $k\in\N$ and $(i_1,i_2,\dots,i_k)\in I_0^k$, we call {\it atom of generation $k$} the set
\[
A_{i_1i_2\dots i_k}=F_{i_k}\circ F_{i_{k-1}}\circ\dots\circ F_{i_1}(\Sigma^*_{\{i_1\}})
\]
and we call {\it family of the atoms of generation $k$} the set
$\mathcal{A}_k=\{A_{i_1i_2\dots i_k},\,(i_1,i_2,\dots,i_k)\in {I_0}^{k}\}$. Note that the forward
invariance of $S_\eta$ by $\rho$ ensures that any atom is contained in $ S_\eta$.

\begin{Lemma}\label{PROPATOMS} For all $k\in\N$, we have

\noindent $i)$ If $\V\in S_\eta$, then $\rho^k(\V)$ belongs to an atom of generation $k$.

\noindent $ii)$ Any atom of generation $k+1$ is contained in an atom of generation $k$.

\noindent $iii)$ Denotes $\mbox{\em diam}(A)$ the diameter of $A$ and let $d_k=\max\limits_{A\in\mathcal{A}_k}\mbox{\em diam}(A)$. Then $\lim\limits_{k\to\infty} d_k=0$.
\end{Lemma}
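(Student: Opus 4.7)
The plan is to first derive a clean inductive description of the atoms and then read off all three items from it. Unfolding $F_i$ and using the forward invariance $\rho(S_\eta)\subset S_\eta$ from (\ref{INCLUSIONSNUDELTA}), a straightforward induction on $k$ will yield the characterization
\[
A_{i_1i_2\ldots i_k}=\{\rho^k(\V)\ :\ \V\in S_\eta\ \text{and}\ \rho^{j-1}(\V)\in \Sigma^*_{\{i_j\}}\ \forall\, j=1,\ldots,k\}.
\]
The base case reduces to $A_{i_1}=F_{i_1}(\Sigma^*_{\{i_1\}})=\rho(\Sigma^*_{\{i_1\}}\cap S_\eta)$; the inductive step uses $A_{i_1\ldots i_{k+1}}=\rho\bigl(A_{i_1\ldots i_k}\cap\Sigma^*_{\{i_{k+1}\}}\cap S_\eta\bigr)$, observing that the constraint $\rho^k(\V)\in S_\eta$ is automatic by forward invariance once $\V\in S_\eta$.

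For item $i)$, (\ref{INCLUSIONSNUDELTA}) also supplies $S_\eta\subset \bigcup_{i\in I_0}\Sigma^*_{\{i\}}$ with the $\Sigma^*_{\{i\}}$ pairwise disjoint. Any $\V\in S_\eta$ therefore determines a unique sequence $(i_1,\ldots,i_k)\in I_0^k$ with $\rho^{j-1}(\V)\in \Sigma^*_{\{i_j\}}$, and the characterization places $\rho^k(\V)$ in $A_{i_1\ldots i_k}$. For item $ii)$, I will take $\W\in A_{i_1\ldots i_{k+1}}$, write $\W=\rho^{k+1}(\V)$ as in the characterization, and set $\V'=\rho(\V)\in S_\eta$: then $\V'$ visits the pieces $(i_2,\ldots,i_{k+1})$ in order and $\W=\rho^k(\V')$, which shows $A_{i_1\ldots i_{k+1}}\subset A_{i_2\ldots i_{k+1}}\in \mathcal{A}_k$.

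For item $iii)$, let $\W_1,\W_2\in A_{i_1\ldots i_k}$, written as $\W_r=\rho^k(\V_r)$ with $\V_r\in S_\eta$ and $\rho^{j-1}(\V_r)\in \Sigma^*_{\{i_j\}}$ for $r=1,2$ and all $j$. At each step the two iterates lie in the same continuity piece $\Sigma^*_{\{i_j\}}$, so condition 2) of Definition \ref{DEFCONTRACTIVEMAP} applies and an immediate induction yields $\|\W_1-\W_2\|=\|\rho^k(\V_1)-\rho^k(\V_2)\|\leq \lambda^k\|\V_1-\V_2\|$. Since $\Sigma^*$ is compact, $M:=\mbox{diam}(\Sigma^*)<\infty$, hence $d_k\leq\lambda^k M\to 0$.

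The step requiring the most care will be the bookkeeping in the characterization, namely keeping track of the $S_\eta$-factors in each application of $F_{i_{k+1}}$ and checking that they collapse thanks to the forward invariance of $S_\eta$. Once the characterization is in place, items $i)$, $ii)$ and $iii)$ reduce respectively to a one-line use of the disjoint decomposition in (\ref{INCLUSIONSNUDELTA}), a shift of indices, and an iteration of the uniform contraction rate $\lambda$.
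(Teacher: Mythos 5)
Your proposal is correct and follows essentially the same route as the paper's proof: forward invariance of $S_\eta$, the disjoint decomposition of $S_\eta$ into the continuity pieces $\Sigma^*_{\{i\}}$ from (\ref{INCLUSIONSNUDELTA}), the shifted-index inclusion $A_{i_1\dots i_{k+1}}\subset A_{i_2\dots i_{k+1}}$ for item $ii)$, and iterated application of the contraction constant $\lambda$ for item $iii)$. The only difference is organizational — you front-load an explicit pointwise characterization of the atoms before reading off the three items, whereas the paper argues directly on the compositions $F_{i_k}\circ\dots\circ F_{i_1}$ — and your bookkeeping (collapsing the $S_\eta$-factors by forward invariance) is sound.
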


\begin{proof}$i)$ It follows from the forward invariance of $S_\eta$  and
from the inclusion $S_\eta\subset\bigcup\limits_{i\in I_0}\Sigma^*_{\{i\}}$ stated
in (\ref{INCLUSIONSNUDELTA}). Assume that there exists $\V\in S_\eta$. Then, there  exists   $i_1\in I_0 $ such that $\V\in \Sigma^*_{\{i_1\}}\cap S_\eta$. It follows
that
\[
\rho(\V)\in\rho(\Sigma^*_{\{i_1\}}\cap S_\eta)=F_{i_1}(\Sigma^*_{\{i_1\}})\in\mathcal{A}_1.
\]
By induction, suppose  that $\rho^k(\V)$ belongs to $A_{i_1\dots i_k}\in\mathcal{A}_k$. From (\ref{INCLUSIONSNUDELTA}) it follows that $\rho^{k}(\V)\in \Sigma^*_{\{i_{k+1}\}}\cap S_\eta$
for some $i_{k+1}\in I_0$. Using the induction hypothesis we obtain
\[
\rho^{k+1}(\V)\in\rho(\Sigma^*_{\{i_{k+1}\}}\cap S_\eta\cap A_{i_1\dots i_k})
=F_{i_{k+1}}\circ\cdots\circ F_{i_1}(\Sigma^*_{\{i_1\}})\in\mathcal{A}_{k+1}.
\]

\noindent $ii)$ Let $A_{i_1 i_2\dots i_{k+1}}$ be an atom of generation $k+1$. Then,
\begin{eqnarray*}
A_{i_1 i_2\dots i_{k+1}}&=&F_{i_{k+1}}\circ\dots\circ F_{i_{2}}\circ F_{i_1}(\Sigma^*_{\{i_{1}\}})\\
&\subset& F_{i_{k+1}}\circ\dots\circ F_{i_2}(\Sigma^*)\\
&\subset& F_{i_{k+1}}\circ F_{i_{k}}\circ\dots\circ F_{i_2}(\Sigma^*_{\{i_{2}\}})
\in\mathcal{A}_{k}.
\end{eqnarray*}

\noindent $iii)$ It is enough to show by induction that
\begin{equation}\label{RECU}
\mbox{diam}(A)\leq \lambda^{k-1}\mbox{diam}(\Sigma^*) \qquad \forall\, A\in\mathcal{A}_k.
\end{equation}
Since any atom is a subset of $\Sigma^*$, it is true for $k=1$. Assume (\ref{RECU}) is true
for $k=k_0$. Take $A=A_{i_1i_2\dots i_{k_0+1}}\in\mathcal{A}_{k_0+1}$ and $\V$, $\W\in A$.
Then $A=\rho(A'\cap\Sigma^*_{\{i_{k_0+1}\}}\cap S_\eta)$ where $A'=A_{i_1i_2\dots i_{k_0}}\in
\mathcal{A}_{k_0}$.
Therefore, there exists $\V'$, $\W'\in\Sigma^*_{\{i_{k_0+1}\}}\cap A'$ such that $\V=\rho(\V')$
and $\W=\rho(\W')$. Applying the contraction property and the induction hypothesis we obtain:
\[
\|\V-\W\|=\|\rho(\V')-\rho(\W')\|\leq\lambda\|\V'-\W'\|\leq\lambda\mbox{diam}(A')\leq
\lambda^{k_0}\mbox{diam}(\Sigma^*),
\]
which implies the desired result, since $\V$ and $\W$ are arbitrary in the atom $A$. \qed
\end{proof}

\begin{Lemma}\label{INUTIL} There exists $k\geq 1$ such that if $A\in\mathcal{A}_k$ then $\overline{A}\subset\Sigma^*_{\{i\}}$ for some $i\in I_0 $.
\end{Lemma}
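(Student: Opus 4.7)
The plan is to combine the shrinkage of diameters $d_k\to 0$ from Lemma~\ref{PROPATOMS}(iii) with the fact that every point of $S_\eta$ lies at distance at least $\eta$ from the discontinuity set $\partial\mathcal{P}^*$. Once $d_k$ drops below a geometric threshold depending only on $\eta$, each atom of generation $k$ fits inside a sup-norm ball whose intersection with $\Sigma^*$ is trapped in a single open piece $\Sigma^*_{\{i\}}$ of the cover (\ref{DECOMPSIGMA*}); the whole content of the proof is in quantifying this threshold.

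\textbf{Key steps.} Fix a nonempty atom $A\in\mathcal{A}_k$ and pick any $\V_0\in A$. Since $A\subset S_\eta\subset\bigcup_{i\in I_0}\Sigma^*_{\{i\}}$ by (\ref{INCLUSIONSNUDELTA}), there is a unique $i_0\in I_0$ with $\V_0\in\Sigma^*_{\{i_0\}}$, and by the definition of $S_\eta$ one has $d(\V_0,\partial\mathcal{P}^*)\geq\eta$. The core step is a geometric lemma: there exists $\delta_0\geq\eta$ such that $B(\V_0,\delta_0/2)\cap\Sigma^*\subset\Sigma^*_{\{i_0\}}$. For $i_0\in I^-$ I would set $\delta_0:=V_{0,i_0}-\max_{k\neq i_0}V_{0,k}$, which is strictly positive by (\ref{SIGMAJINHIB}); a direct sup-norm computation then shows that any $\W\in\Sigma^*$ with $\|\W-\V_0\|<\delta_0/2$ satisfies
$W_{i_0}>V_{0,i_0}-\delta_0/2\geq V_{0,k}+\delta_0/2>W_k$
for every $k\neq i_0$, hence $\W\in\Sigma^*_{\{i_0\}}$. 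To check $\delta_0\geq\eta$, construct the nearby boundary point $\V'$ obtained from $\V_0$ by lowering its $i_0$-th coordinate to the value $V_{0,j^*}$, where $j^*\in\arg\max_{k\neq i_0}V_{0,k}$: the characterization (\ref{BORDESIGMAJ}) gives $\V'\in\partial\Sigma^*_{\{i_0\}}\subset\partial\mathcal{P}^*$ with $\|\V'-\V_0\|=\delta_0$, whence $\delta_0\geq d(\V_0,\partial\mathcal{P}^*)\geq\eta$. The case $i_0=0$, that is $\V_0\in\overset{\circ}{\Sigma^*_{I}}$, is completely parallel with $\delta_0:=\max_{i\in I^+}V_{0,i}-\max_{k\in I^-}V_{0,k}$, using (\ref{SIGMAI}) and (\ref{BORDESIGMAI}) in place of (\ref{SIGMAJINHIB}) and (\ref{BORDESIGMAJ}). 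Finally, by Lemma~\ref{PROPATOMS}(iii), choose $k$ large enough that $d_k<\eta/2$, which is possible and uniform over $A\in\mathcal{A}_k$ since $d_k$ is defined as the maximum diameter of atoms of generation $k$. For every $\W\in\overline{A}$ one then has $\|\W-\V_0\|\leq\mbox{diam}(A)\leq d_k<\eta/2\leq\delta_0/2$, and therefore $\overline{A}\subset\Sigma^*_{\{i_0\}}$.

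\textbf{Main obstacle.} The delicate part is the geometric lemma that converts the hypothesis $d(\V_0,\partial\mathcal{P}^*)\geq\eta$ into a concrete coordinate margin $\delta_0\geq\eta$ around $\V_0$; once this is in place, the lemma closes mechanically by the contraction of diameters. The two cases $i_0\in I^-$ and $i_0=0$ must be handled in parallel, and in each one must verify that the nearby boundary witness $\V'$ genuinely belongs to $\Sigma^*$, which amounts to checking that $\V'$ retains at least one zero coordinate. The natural construction (modifying only the two tied indices) works unless the unique zero coordinate of $\V_0$ is one of those indices, in which case moving the tied coordinate down to zero instead keeps $\V'\in\Sigma$ while preserving the distance estimate. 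No deeper difficulty arises.
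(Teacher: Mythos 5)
Your skeleton (pick $k$ with $d_k<\eta/2$, then show each small atom closure sits in one piece) matches the paper, and your first step is fine: for $\V_0\in\Sigma^*_{\{i_0\}}$ with coordinate margin $\delta_0$, the sup-norm computation showing $B(\V_0,\delta_0/2)\cap\Sigma^*\subset\Sigma^*_{\{i_0\}}$ is correct. The genuine gap is the conversion step $\delta_0\geq\eta$. You obtain it by exhibiting a tie-point $\V'$ at distance $\delta_0$ from $\V_0$ and invoking $d(\V_0,\partial\mathcal{P}^*)\geq\eta$; but for this $\V'$ must belong to $\partial\mathcal{P}^*$, and by (\ref{BORDESIGMAJ}) and (\ref{BORDESIGMAI}) the set $\partial\mathcal{P}^*$ consists only of tie-points that lie in $\Sigma^*$. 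Membership in $\Sigma^*$ does not ``amount to retaining a zero coordinate'': $\Sigma^*=\overline{\rho(\mathcal{C}_{\bar c})}$ is the closure of the image of the contractive zone under the return map, a specific compact (typically thin) subset of the section, and it is in no way stable under modifying one coordinate of one of its points. Your repair (lowering the tied coordinate to zero) only secures membership in $\Sigma$, not in $\Sigma^*$. Consequently $\delta_0\geq\eta$ is not proved, and it does not follow from the definition of $S_\eta$: a point of $S_\eta$ can perfectly well have a tiny coordinate margin while every genuine tie-point of $\Sigma^*$ stays at distance $\geq\eta$, simply because $\Sigma^*$ need not contain any point realizing the nearby tie. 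Without a uniform positive lower bound for $\delta_0$ on $S_\eta$, choosing $d_k<\eta/2$ no longer gives $d_k<\delta_0/2$, and the argument does not close.

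The paper avoids constructing any witness point: for $\V\in S_\eta\cap\Sigma^*_{\{i\}}$ it bounds below the distance from $\V$ to $E=\Sigma^*\setminus\Sigma^*_{\{i\}}$ by $d(\V,\partial\Sigma^*_{\{i\}})\geq d(\V,\partial\mathcal{P}^*)\geq\eta$, i.e.\ all distances are measured to subsets of $\Sigma^*$, so any $\W\in\overline{A}$ (which lies in $\Sigma^*$, since atoms are contained in $S_\eta$ and $\Sigma^*$ is closed) with $\|\W-\V\|<\eta/2$ must avoid $E$. If you prefer a self-contained route, a compactness argument also works and needs no coordinate margin at all: the pieces $\Sigma^*_{\{i\}}$, $i\in I_0$, are finitely many pairwise disjoint open subsets of $\Sigma^*$ whose union is $\Sigma^*\setminus\partial\mathcal{P}^*$, and $\overline{A}$ stays at distance $\geq\eta$ from $\partial\mathcal{P}^*$ for every atom; if for every $k$ some atom closure met two distinct pieces, then $d_k\to 0$ and compactness of $\Sigma^*$ would yield a limit point belonging to no piece, hence to $\partial\mathcal{P}^*$, while being a limit of points at distance $\geq\eta$ from $\partial\mathcal{P}^*$, a contradiction. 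Either of these replaces your step $\delta_0\geq\eta$; as it stands, that step is the missing (and unjustified) core of your proof.
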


\begin{proof} Let $k\geq 1$ such that $d_k<\eta/2$ and $A\in\mathcal{A}_k$ not empty. Take $\V\in A$ and $i\in I_0 $ such that $\V\in\Sigma^*_{\{i\}}$ (recall $A\subset S_\eta$ and (\ref{INCLUSIONSNUDELTA})). Let $E=\Sigma^*\setminus\Sigma^*_{\{i\}}$ and let $\partial E$
be its boundary. We denote $d(\V,E)$ the distance between the point $\V$ and the set $E$. We have:
\[
d(\V,E)=d(\V,\partial E)=d(\V,\partial\Sigma^*_{\{i\}})\geq d(\V,\partial\mathcal{P}^*)\geq \eta.
\]
Let $\W\in\overline{A}$ then $d (\W,\V)\leq\mbox{diam }\overline{A}\leq d_k<\eta/2$ and from
\[
d(\W,E)\geq d(\V,E)- d(\W,\V)\geq \eta-\frac{\eta}{2}>0
\]
we deduce that $\W\notin E$. Then $\W\in\Sigma^*_{\{i\}}$. \qed
\end{proof}

\begin{Remark}\label{TRANSITCYCLE}\em The number of iteration $k$ of the return map that ensure that any atom of generation $k$ of $S_{\eta}$ is included in a continuity piece of the return map depends on $\eta$. This number is
an upper bound on the transitory time for all the orbits of $S_{\eta}$ to enter in the attraction basin of a limit cycle and on the period of this cycle. When $\eta$ increases this upper bound decreases. The largest transitory times and the largest period are therefore those of the points of the sets $S_{\eta}$ with the smallest $\eta$. \em
\end{Remark}

\begin{Lemma}\label{FANTOM} If there exist $(i_0,i_1,\dots,i_{p-1})\in I^p_0$ and a family of sets
$B_{0},\, B_{1},\dots,B_{p-1}$ satisfying

\vspace{1ex}
\noindent $i)$  $\overline{B}_{k}\subset\Sigma^*_{\{i_k\}}$ for all $k\in\{0,\dots,p-1\}$ and

\vspace{1ex}
\noindent $ii)$ $\rho(B_{p-1})\subset B_0$ and $\rho(B_{k-1})\subset B_{k}$ for all $k\in\{1,\dots,p-1\}$,

\vspace{1ex}
\noindent then there exists a unique periodic point of period $p$ in $\overline{B}_0$ whose orbit is the $\omega$-
limit set of any point contained in the union of the $B_k$'s.
\end{Lemma}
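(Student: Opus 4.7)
The plan is to reduce the lemma to an application of the Banach fixed point theorem to $\rho^p$ restricted to the complete space $\overline{B}_0$, and then propagate the resulting convergence to every phase of the cycle.

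First I would observe that by hypothesis i), each $\overline{B}_k$ lies inside the open continuity piece $\Sigma^*_{\{i_k\}}$, on which $\rho$ is continuous and $\lambda$-Lipschitz by Definition \ref{DEFCONTRACTIVEMAP}. Combined with hypothesis ii), continuity extends the inclusions to closures, giving $\rho(\overline{B}_{k-1})\subset\overline{B}_k$ for $k=1,\dots,p-1$ and $\rho(\overline{B}_{p-1})\subset\overline{B}_0$. Composing the Lipschitz bounds along the cycle yields $\rho^p(\overline{B}_0)\subset\overline{B}_0$ together with $\|\rho^p(\V)-\rho^p(\W)\|\leq\lambda^p\|\V-\W\|$ for all $\V,\W\in\overline{B}_0$. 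Since $\overline{B}_0$ is closed in the compact set $\Sigma^*$ it is complete, so Banach's theorem produces a unique fixed point $\V^*\in\overline{B}_0$ of $\rho^p$. The $\rho$-orbit $\{\V^*,\rho(\V^*),\dots,\rho^{p-1}(\V^*)\}$ is then a periodic orbit whose $k$-th phase lies in $\overline{B}_k$, and any other periodic point of $\rho$ in $\overline{B}_0$ whose period divides $p$ is a $\rho^p$-fixed point in $\overline{B}_0$ and hence equals $\V^*$, giving the uniqueness statement.

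Next I would establish the attraction claim. For $\V\in B_j$, iterating condition ii) yields $\rho^{p-j+mp}(\V)\in B_0\subset\overline{B}_0$ for every $m\geq 0$, and the $\lambda^p$-contraction of $\rho^p$ on $\overline{B}_0$ gives $\|\rho^{p-j+mp}(\V)-\V^*\|\leq\lambda^{mp}\,\mathrm{diam}(\overline{B}_0)\to 0$ as $m\to\infty$. Combining this with continuity of $\rho$ on each $\overline{B}_k$, each subsequence $\{\rho^{p-j+mp+k}(\V)\}_m$ converges to $\rho^k(\V^*)$ for $k=0,\dots,p-1$, so $\omega(\V)$ is exactly the orbit of $\V^*$.

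The only delicate point, I expect, is the step that turns hypothesis i) into Lipschitz and invariance estimates on the closures: one must verify that the strict inclusion $\overline{B}_k\subset\Sigma^*_{\{i_k\}}$ into an open continuity piece really does allow Definition \ref{DEFCONTRACTIVEMAP} to be applied on $\overline{B}_k$ without encountering a discontinuity of $\rho$ on the boundary. Once this is checked, the remainder is a routine Banach-type cycle argument.
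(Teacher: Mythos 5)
Your proposal is correct and follows essentially the same route as the paper: apply the Banach fixed point theorem to the $\lambda^p$-contraction $\rho^p$ on the complete set $\overline{B}_0$ (using that each $\overline{B}_k$ sits inside an open continuity piece where $\rho$ is $\lambda$-Lipschitz), and then use the contraction along the cycle to show every orbit starting in the $B_k$'s has the periodic orbit as its $\omega$-limit set. The only difference is cosmetic: you track the subsequences of the orbit modulo $p$ explicitly, while the paper compares $\rho^{p_j}(\V)$ with $\rho^{p_j}(\tilde{\V})$ along arbitrary convergent subsequences; and the ``delicate point'' you flag is immediate since $\overline{B}_k\subset\Sigma^*_{\{i_k\}}$ places all relevant pairs of points inside a single continuity piece.
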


\noindent{\it Proof:} Since by $i)$ each $\overline{B}_k$ is contained in a contractive piece of $\rho$, by $ii)$ the map $ \rho^p$ is contractive in $\overline{B}_0$ and $\rho^{p}(\overline{B}_0)\subset \overline{B}_0$. Then, by the fixed point theorem of Banach,  there exists a unique periodic point $\tv$ of period $p$ in $\overline{B}_0$.

We prove now that if $\V\in B_{k}$, then $\omega(\V)$ is the orbit $L$ of $\tv$. Without loss of generality we can
assume $\V\in B_0$. It is enough to show that $\omega(\V) = \omega (\tv)$,
since the $\omega$-limit set of a periodic point coincides with its orbit.

Let ${\{p_j\}}_{j\in\N}$ be a sequence of natural numbers such that $\lim\limits_{j\to\infty} p_j=+\infty$ and such that either $\lim\limits_{j \to\infty}\rho^{p_j}(\V)$ or
$\lim\limits_{j \to\infty}\rho^{p_j}(\tv)$ exists. Since $\V$ and $\tv$ belong both to $\overline{B}_0$, according to  $ii)$ $\rho^{p_j}(\V)$ and $\rho^{p_j}(\tv)$ belong to the same
continuity piece for all $j\in\N$. Using the contraction property we obtain:
\[
\lim_{j\to\infty}\|\rho^{p_j}(\V) - \rho^{p_j}(\tv)\|
\leq\lim_{j\to\infty}\lambda^{p_j}\| \V  -  \tv \|=0.
\]
It follows that both $\lim\limits_{j\to\infty}\rho^{p_j}(\V)$ and $\lim\limits_{j\to\infty}\rho^{p_j}(\tv)$ exist
and are equal.
This proves that $\omega(\V) = \omega (\tv)$ as wanted. \CQFD

\vspace{.3cm}

\noindent{\it Proof of Proposition \ref{CYCLESETA}:} {\bf Step 1} Let ${\tilde k}\in\N$ be such that the thesis of Lemma \ref{INUTIL} is true. We show that the image by $\rho$ of any atom of $\mathcal{A}_{\tilde k}$ is contained in an atom of $\mathcal{A}_{\tilde k}$. Suppose $A\in\mathcal{A}_{\tilde k}$. By Lemma \ref{INUTIL},
there exists $i\in I_0 $ such that $\overline A\subset\Sigma^*_{\{i\}}$, and since any atom
is contained in $S_\eta$, we have $\rho(A)=\rho(A\cap\Sigma^*_{\{i\}}\cap S_\eta)
=F_{i}(A)$. Then, $\rho(A)\in\mathcal{A}_{\tilde k +1}$, and according to $ii)$ of Lemma \ref{PROPATOMS} it
is included in an atom of $\mathcal{A}_{\tilde k}$.

\noindent{\bf Step 2} Let $\V\in S_\eta$ and $\W=\rho^{\tilde k}(\V)$. Then, from $i)$ and
$ii)$ of Lemma \ref{PROPATOMS} we deduce that each point of the orbit of $\W$ belongs to an atom
of $\mathcal{A}_{\tilde k}$. Being the number of atoms in $\mathcal{A}_{\tilde k}$ finite, it must exist $B_0\in\mathcal{A}_{\tilde k}$ such that $\rho^{j_0}(\W)\in B_0$ and $\rho^{j_0+p}(\W)\in B_0$ for some $j_0$ and $p\in\N$. Let's denote $B_1,B_2,\dots,B_{p-1}$ the atoms of $\mathcal{A}_{\tilde k}$ that contain respectively
$\rho^{j_0+1}(\W),\rho^{j_0+2}(\W),\dots, \rho^{j_0+p-1}(\W)$. By Step 1, the image by $\rho$ of each
of these atoms is contained in an atom of $\mathcal{A}_{\tilde{k}}$, so they must satisfy:
\[
\rho(B_{p-1})\subset B_0 \quad\mbox{ and }\quad \rho(B_{k-1})\subset B_{k} \qquad\forall\,k\in\{1,\dots,p-1\}.
\]
Moreover, by Lemma \ref{INUTIL} there exists $(i_0,\dots,i_{p-1})\in I_0^{p}$ such that $\overline{B}_{k}
\subset\Sigma_{c}^{i_k}$ for all $0\leq k\leq p-1$. Therefore, this family of atoms satisfy the hypothesis of
Lemma \ref{FANTOM}. Since $\rho^{\tilde{k}+j_0}(\V)\in B_0$ and
$\omega(\V)=\omega(\rho^{{\tilde k}+j_0}(\V))$, we conclude that $\omega(\V)$ is a periodic orbit of period $p$
contained in the union of the atoms $B_k$'s. Note that it may exit at most $\#\mathcal{A}_{\tilde k}$ different
families satisfying Lemma \ref{FANTOM} and thus at most $\#\mathcal{A}_{\tilde k}$ different periodic orbits in $S_\eta$.

\noindent{\bf Step 3} We have shown that the $\omega$-limit set of any point of $S_\eta$
is a periodic orbit contained in $S_\eta$. We finish the proof of the propostion by
showing that any periodic orbit in $S_\eta$ is actually a limit cycle.

Suppose $\tilde{\V}$ is a periodic point of period $p$ and let
$L=\{\tilde{\V},\rho(\tilde{\V}),\dots,\rho^{p-1}(\tilde{\V})\}\subset S_\eta$ be its
orbit. We have to prove that there exists an open neighborhood of $L$ which points have $L$ as $\omega$-limit set
(i.e, the basin of attraction of $L$ contains an open neighborhood of $L$). For all $k\in\{0,\dots, p-1\}$ we have $\rho^k(\tv)\in\Sigma^*_{\{i_k\}}$ where $(i_1,\dots,i_p)\in I_0^{p}$. Since the continuity piece $\Sigma^*_{\{i_k\}}$ is open, there exists an open ball $B(\rho^k(\tv),a_k)$ of center $\rho^k(\tv)$ and radius  $a_k>0$ whose closure is contained in $\Sigma^*_{\{i_k\}}$.
Let $0<a<\min\{a_k,\ 0\leq k\leq p-1\}$ and $B$ be the open neighborhood of $L$ defined by:
\[
B=\bigcup_{k=0}^{p-1} B_k\quad \mbox{ where }\quad B_k=B(\rho^k(\tv),a)\quad \forall\, k\in\{0,\dots, p-1\}.
\]
We have $\overline{B}_k\in\Sigma^*_{\{i_k\}}$, and if we show that $\rho(B_{p-1})\subset B_0$ and $\rho(B_{k-1})
\subset B_k$
for all $k\in\{0,\dots, p-1\}$, then we can apply Lemma \ref{FANTOM} to prove that the open set $B$ contains
a unique periodic orbit ($L$) which is the $\omega$-limit set of all the points of $B$.
We will only prove that $\rho(B_{p-1})\subset B_0$, the other inclusions admit an analogous
proof. If $\V\in B_{p-1}$ then $\V$ and $\rho^{p-1}(\tv)$ belong to the same piece of continuity
$\Sigma^*_{\{i_{p-1}\}}$. Applying the contraction property of $\rho$ we obtain:
\[
\|\rho(\V)-\rho^{p}(\tv)\|\leq\lambda\|\V-\rho^{p-1}(\tv)\|<\lambda a<a.
\]
In other words $\|\rho(\V)-\tv\|<a$ which implies $\rho(\V)\in B_0$. \CQFD



\section{Conclusion}

In the present paper, we gave a mathematical analysis of the dynamics of IF neural networks, with
an arbitrary number of neurons interacting by instantaneous excitations and inhibitions. The main
purpose was to give a global analysis describing the asymptotic behavior of all the solutions of
the model.

Our study is in part motivated by the comparison of continuous time IF neural networks with discrete time
versions, such as those analyzed in \cite{C08}. An important characteristics of the model of \cite{C08} is
that it is piecewise contractive in the whole the phase space. In our case where time is continuous,
we show that the associated return map does not fulfill this property trivially. In particular, we found sets
of parameters where the return map exhibits expansion and has a repulsive periodic orbit.
It is therefore necessary to impose some conditions on the parameters values for the return map to be piecewise contractive in the whole phase space. We have proposed such conditions through the hypothesis (H3), which states that the interactions have to be sufficiently strong, or equivalently, the resistance of the neural membrane $R$ and/or the external current $I_{ext}$ sufficiently weak. However, since it is a non-trivial task to find parameters for which the return map exhibits real expansion, there certainly exist other sets of parameters than those defined by (H3), for which the return map is piecewise contracting in the whole phase space.

The property of piecewise contraction is mainly relevant to analyze the behavior of networks containing inhibitory neurons. We also study the dynamics of networks exclusively composed of excitatory neurons. For these networks, we only assume that all the interactions are strictly positive. We show that if the number of neurons is sufficiently large then, for any initial state, the network gets synchronized in a finite time. These results complete the work of \cite{MS90}, since we allow the weights of the excitatory synapses to be mutually different, while in \cite{MS90} all the interactions are equal to a same constant. The number of neurons that are necessary to provoke this global synchronization, as well as the duration of the transient regime, is a decreasing function of the minimum positive value of the interactions. We show in concrete examples, that if the number of neurons is not large enough, then the  global synchronization does not occur, since the network also presents some non synchronized periodic orbits. The relevance of the number of neurons (connections) for the synchronization has also been observed in \cite{DP08} where the effectiveness of the connections between neurons is modeled by a random variable. The synchronization is observed for high probability of connection. But if this probability is small, then asynchronous states appear.


Coming back to the region of parameters defined by (H3), we have shown that the return map of a network containing inhibitory neurons, satisfies a general definition of piecewise contractive map (Definition \ref{DEFCONTRACTIVEMAP}). To analyze the asymptotic dynamics of such maps, we define two complementary sets of the phase  space: the stable set and the sensitive set. The stable points fulfill a criterion of stability which is weaker than the negativeness of all the Lyapunov exponents. Our criterion is well defined even for maps which are neither differentiable nor continuous, and it is satisfied in particular by the maps with negative maximal Lyapunov exponent. On the other hand, the orbit of a sensitive point can be drastically modified by some arbitrarily  small perturbations which change the firing neurons. The sensitive set contains all the points whose orbit gets arbitrarily near to the discontinuities set of the return map.

We show that the stable set of a piecewise contractive map contains a countable number (possibly infinite) of limit cycles such that the union of their basins of attraction contains all the stable points. It results that the asymptotic dynamics of a network with a stable initial state is periodic.
The limit cycles are persistent under deterministic and stochastic perturbations, provided that their amplitude is small enough. Nevertheless, not all the limit cycles have the same stability, in particular, those that are near to sensitive points, can be destroyed by small, but non infinitesimal, perturbations. An orbit approaching them can therefore suffer a drastic change under the influence of a small perturbation.
This perturbation typically changes the basin of attraction the orbits belongs to, and it enlarges the transient time necessary to reach a limit cycle. Moreover, these ``weakly" stable cycles have the longest transient times to approach them and the largest periods. In the extreme case where the stable set contains an infinite number of limit cycles, an infinity of them are concentrated near to sensitive points. It results that these limit cycles attract a large region of the phase space, and that their experimental observation needs high precision and large time scales. The existence of very long disordered transients in a stable dynamics has been described as stable chaos in \cite{PT10} or virtual chaos in \cite{C08} and has been observed in numerical simulations of neural networks in \cite{C08}, \cite{ZBH09} and \cite{Z06}, for instance.

On the other hand, the characterization of all the possible attractors of the sensitive set is still an open question. Nevertheless, it is known that for one dimensional piecewise contractive maps the attractor of the sensitive points is essentially a Cantor set. Also, in \cite{CGMU} it has been reported that in higher dimension, other kind of attractors with a cardinality or a complexity lower or higher than the one of a Cantor set can exist. None of the examples described in \cite{CGMU} contains periodic orbits. We can therefore expect that the asymptotic periodic dynamics of stable IF neural networks can coexist with a non periodic sensitive dynamics. The coexistence of stable and sensitive dynamics in the same system, as well as the abundance of stable chaos, depend on how intricate are the sensitive set and the stable set and on their respective size.

It is mainly admitted that the existence of strong chaos needs expansivity. We have shown that IF neural networks can have such a property. Our main purpose was to show that some conditions were needed for the return map to be piecewise contractive. However, it would be interesting to make a deeper study of the phenomenon of non global contraction or expansivity, in some region of the phase space, as in Theorem \ref{SNOCONT}. Although our result is stated in a different way, its proof relies on the existence of a repulsive periodic orbit. We think that it is possible to find more of these orbits, which would be a further step in the research of chaos in neural networks.

To sum up, in the present paper we give a rigorous analysis of the asymptotic dynamics of a family of
IF neural networks. Our analysis is principally qualitative and proved under well posed hypothesis, which can certainly be relaxed. In particular, the results about the stable dynamics of the networks under study are proved in some region of the parameters, where the return map is a piecewise contraction. But these results apply to any piecewise contractive map satisfying the general Definition \ref{DEFCONTRACTIVEMAP}. Therefore, Proposition \ref{CYCLESETA} and Theorem \ref{teoremaPrincipal}, as well as their consequences, will then be true for any neural network (possibly more sophisticated) in the parameters region where its return map satisfies Definition \ref{DEFCONTRACTIVEMAP}. Also, the proofs of the principal theorems and propositions give some hints to obtain more quantitative results. We hope that the results of the present paper have helped to identify some important concepts and to pose some basis of future studies.

\section{Appendix: Some technical proofs.}

\subsection{Proof of Lemma \ref{PARAM}} \label{PROOFPARAM}

Let $i\neq j \in I$. To prove the Lemma \ref{PARAM}, we suppose the neurons $i$ and $j$ satisfy  the four  conditions (O1), (O2) and (O3) stated before the lemma.

\vspace{1ex}

\noindent {\bf Step 1}: We show that $\Gamma_i\subset\Sigma_{I\setminus\{j\}}$, that is to say $J(\V)=I\setminus\{j\}$ (to prove $\Gamma_j\subset\Sigma_{I\setminus\{i\}}$ just permute $i$ and $j$). Suppose $\V\in\Gamma_{i}$. Then, $V_k<V_i$ for all $k\neq i$ and the neuron $i$ is the only neuron which reaches the threshold spontaneously at time $t_i(\V)=\bar{t}(\V)$. It implies that $\V\in\Sigma_i$ and $J_{0}(\V)=\{i\}$ (recall (\ref{PARTITIONBI})).

 Now let us compute the set of the neurons that fire by interaction with the neuron $i$ at time $\bar{t}(\V)$. Let $k\in I\setminus J_0(\V)=I\setminus\{i\}$, then using (\ref{PHITV}) with $V_k=0$ we obtain
\[
\phi^{\bar{t}(\V)}_k(\V) + \sum_{l\in J_{0}(\V)\,:\, H_{lk}>0}H_{lk}=\phi^{t_i(\V)}_k(\V) + H_{ik}=\beta-
\frac{\beta(\beta-\theta)}{\beta-V_i} + H_{ik},
\]
which belongs to $(H_{ik},c^\ast+H_{ik})$ since $V_i\in (c^\ast,\theta)$. If $k\neq j$, then $k\notin\{i,j\}$ and by hypothesis (O2) we have $H_{ik}>\theta$. Therefore, $k\in J_1(\V)$.
If $k=j$, by hypothesis (O1) we have $H_{ik}<c^\ast-\theta$ and $k\notin J_1(\V)$. We deduce that $J_1(\V)=I
\setminus \{j\}$.

Now let us compute the set of the neurons that fire by interaction with the neuron of $J_1(V)$ at time $\bar{t}(\V)$. Let $k\in I\setminus J_1(\V)=\{j\}$, then using (\ref{PHITV}) with $V_j=0$ and (O1) we obtain
\[
\phi^{\bar{t}(\V)}_k(\V) + \sum_{l\in J_{1}(\V)\,:\, H_{lk}>0}H_{lk}=\phi^{t_i(\V)}_j(\V) + \sum_{l\neq j \,:\,
H_{lj}>0} H_{lj} \
<c^\ast+\theta-c^\ast=\theta.
\]
It follows that $j\notin J_2(\V)$. We deduce that $J_2(\V)=J_1(\V)=I\setminus\{j\}$ and then
$J(\V)=I\setminus\{j\}$.

\vspace{1ex}
\noindent{\bf Step 2}: We show that there exists $(a,b)\subset (c^\ast,\theta)$ such that for all
$\V\in\Gamma_{i}$ verifying $V_i\in(a,b)$ we have $\rho(\V)\in\Gamma_{j}$. Suppose $\V\in\Gamma_{i}$. From step 1, we have $\Gamma_{i}\subset\Sigma_{I\setminus\{j\}}$, which implies $\rho_k(\V)=0$ for all $k\neq j$. Remains to show that for some $c^\ast<a<b<\theta$, if $V_i\in(a,b)$ then $\rho_j(\V)\in (c^\ast,\theta)$.

Since, ${\bar t}(\V)=t_i(\V)$ using (\ref{PHITV}) we obtain
\[
\rho_j(\V)= g_j(V_i)\quad\mbox{where}\quad g_j(x):=\beta-
\frac{\beta(\beta-\theta)}{\beta-x} + \sum_{l\neq j} H_{lj}\quad\forall\,x\neq\beta.
\]
The function $g_j$ being strictly decreasing, $g_j(x)\in (g_j(\theta),g_j(c^\ast))$ for all $x\in(c^\ast, \theta)$.
By (O3) we have $g_j(c^\ast)=c^\ast+\sum_{l\neq j}H_{lj}>c^\ast$ and by (O1) we have $g_j(c^\ast)<\theta$.
It results that $g_j(c^\ast)\in(c^\ast, \theta)$. Thus, the function $g_j$ being continuous there exists
$(a,b)\subset (c^\ast, \theta)$ such that $g_j(x)\in (c^\ast,\theta)$ for all $x\in(a,b)$. It follows that
$\rho_j(\V)=g_j(V_i)\in(c^\ast,\theta)$ for all $V_i\in(a,b)$, which is the desired result.

\vspace{1ex}
\noindent Note that $g_j$ is injective in $(c^\ast,\theta)$, so $\rho$ is injective $\Gamma_i$. This ends the proof of the Lemma \ref{PARAM}.

\subsection{Proof of Theorem \ref{TMETRIC}}\label{PROOFTMETRIC}

\begin{Lemma}\label{HYPOKATOK} Under the hypothesis {\em (H1),(H2),(H3)} and {\em (H4)} there exists $c>0$ such that
for all $n\in\N$
\[
\|\rho^n(\V)-\rho^n(\W)\|\leq c\lambda^n\|\V-\W\|\qquad
\mbox{if}\qquad \V,\,\W\in\mathcal{P}_{J_0\dots J_n}:=\bigcap\limits_{i=0}^n\rho^{-i}(\Sigma_{J_i})
\]
where $J_0,\dots,J_n\in P(I)$ and $\lambda$ is the contraction constant of $\rho$ in the contractive zone
$\Sigma^*$.
\end{Lemma}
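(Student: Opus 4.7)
The plan is to combine two ingredients. First, Proposition \ref{ATTRACT} provides a finite integer $p\in\N$ (namely $p=p_0+1$ with $p_0=\lceil(\theta-\alpha)/\min_{j\neq i}|H_{ji}|\rceil$) such that $\rho^{p}(\Sigma)\subset\Sigma^{\ast}$; thus from the $p$-th iterate onward both orbits live in the forward invariant contractive zone. Second, the explicit formula (\ref{RETURN}) for the return map, together with the estimates already displayed in the proof of Proposition \ref{CONTRACTIVEZONE}, will yield a uniform Lipschitz constant $L\geq 1$ valid on every single atom $\Sigma_{J}$ of the natural partition $\mathcal{P}$: repeating the computations of Case 2 of that proof, but now on all of $\Sigma$ instead of on $\mathcal{C}_{c}$, one obtains a finite bound of the form $L\leq 1+\frac{\beta-\alpha}{\beta-\theta}$ (the maxima of the coefficients appearing in (\ref{CONTRACTRATEC0})--(\ref{CONTRACTRATE}) with $c$ replaced by $\theta$). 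The $\max\{\alpha,\cdot\}$ in (\ref{RETURN}) is 1-Lipschitz and does not worsen this constant.

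With $L$ and $p$ fixed, the argument is purely a matter of iterating. Suppose $\V,\W\in\mathcal{P}_{J_{0}\dots J_{n}}$; then for every $0\leq i\leq n$ the points $\rho^{i}(\V)$ and $\rho^{i}(\W)$ belong to a common atom $\Sigma_{J_{i}}$, so the single-step Lipschitz bound $L$ applies at each of these steps. During the first $p$ iterations one obtains by direct iteration $\|\rho^{\min(p,n)}(\V)-\rho^{\min(p,n)}(\W)\|\leq L^{\min(p,n)}\|\V-\W\|$. From the $p$-th step onward, both iterates also lie in $\Sigma^{\ast}$, so $\rho^{i}(\V),\rho^{i}(\W)\in\Sigma^{\ast}\cap\Sigma_{J_{i}}$, and the piecewise contractive property of $\Sigma^{\ast}$ gives $\|\rho^{i+1}(\V)-\rho^{i+1}(\W)\|\leq\lambda\|\rho^{i}(\V)-\rho^{i}(\W)\|$. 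Combining the two regimes, for $n>p$ one gets $\|\rho^{n}(\V)-\rho^{n}(\W)\|\leq L^{p}\lambda^{n-p}\|\V-\W\|$, while for $n\leq p$ one has $\|\rho^{n}(\V)-\rho^{n}(\W)\|\leq L^{n}\|\V-\W\|$. In both cases the right-hand side is bounded by $c\lambda^{n}\|\V-\W\|$ for the single constant $c:=(L/\lambda)^{p}\geq 1$, which is the claimed inequality.

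The only non-routine step is producing the uniform Lipschitz constant $L$ on each $\Sigma_{J}\subset\Sigma$: the bounds (\ref{CONTRACTRATEC0})--(\ref{CONTRACTRATE}) were derived under the constraint $\V,\W\in\mathcal{C}_{c}$ with $c<\bar{c}$, and one has to verify that the same computations still yield a finite bound (necessarily $\geq 1$ now) once that constraint is dropped. This amounts to checking that the denominators $\beta-V_{i}$, $\beta-W_{l}$ are uniformly bounded below by $\beta-\theta>0$ on all of $\Sigma$, which is immediate. Beyond this, the proof is mechanical: the hypothesis $\V,\W\in\mathcal{P}_{J_{0}\dots J_{n}}$ precisely ensures that at every step the two orbits sit in a common continuity piece of $\rho$, so no delicate control of discontinuity jumps is required anywhere.
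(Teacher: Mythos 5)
Your proof is correct and follows essentially the same route as the paper's: control the first $p$ iterates (with $p$ from Proposition \ref{ATTRACT}) by a finite Lipschitz bound on the continuity pieces, then apply the $\lambda$-contraction once the orbits are trapped in $\Sigma^*$ by Propositions \ref{ATTRACT} and \ref{INVARIANT}. The only cosmetic difference is that you make the Lipschitz constant explicit per step and take $c=(L/\lambda)^p$, whereas the paper bounds the ratios $\|\rho^k(\V)-\rho^k(\W)\|/(\lambda^k\|\V-\W\|)$ for $k\leq p$ and sets $c$ to their maximum.
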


\begin{proof} Consider the integer $p$ of Proposition \ref{ATTRACT} and define for all
$k\in\{0,\dots,p\}$

\[
c_k=\max_{J_0,\dots,J_k\in P(I)}\sup\left\{\frac{\|\rho^k(\V)-\rho^k(\W)\|}{\lambda^k\|\V-\W\|},\ \V\neq\W
\in\mathcal{P}_{J_0\dots J_k}\right\}
\]
where $\mathcal{P}_{J_0\dots J_k}=\bigcap\limits_{i=0}^k\rho^{-i}(\Sigma_{J_i})$.

Let us show that $c_k$ is bounded for all $k\in\{0,\dots,p\}$. Fix a $k\in\{0,\dots,p\}$ and let
$J_0,\dots,J_k\in P(I)$. The Poincar\'e map $\rho$ being Lipchitz continuous on each $\Sigma_{J_i}$,
by construction of $\mathcal{P}_{J_0\dots J_k}$, the composition $\rho^k$ is also Lipchitz continuous on $
\mathcal{P}_{J_0\dots J_k}$. Then, there exists a constant $L>0$ such that

\[
\|\rho^k(\V)-\rho^k(\W)\|\leq L\|\V-\W\| \qquad\forall\ \V,\,\W \in\mathcal{P}_{J_0\dots J_k},
\]
and as a consequence
\[
\sup_{\V\neq\W\in\mathcal{P}_{J_0\dots J_k}}\frac{\|\rho^k(\V)-\rho^k(\W)\|}{\lambda^k\|\V-\W\|}\leq
\frac{L}{\lambda^k}.
\]
It follows that $c_k$ is bounded for all $k\in\{0,\dots,p\}$ and $c:=\max\limits_{k\in\{0,\dots,p\}} c_k$ exists.

Let $n\in\N$ and $J_0,\dots,J_n\in P(I)$. If $n\leq p$, then by definition of $c$
\begin{equation}\label{CLAMBDAN}
\|\rho^n(\V)-\rho^n(\W)\|\leq c\lambda^n\|\V-\W\|\qquad
\mbox{if}\qquad \V,\,\W\in\mathcal{P}_{J_0\dots J_n}.
\end{equation}
If $n>p$, take (\ref{CLAMBDAN}) as an induction hypothesis. Let $J_{n+1}\in P(I)$ and suppose $\V,\,\W\in
\mathcal{P}_{J_0\dots J_{n+1}}$. As $n\geq p$ by Proposition \ref{ATTRACT} and
Proposition \ref{INVARIANT} we have $\rho^n(\Sigma)\subset\Sigma^*$. Therefore, $\rho^n(\V)$ and $\rho^n(\W)\in
\Sigma^*\cap\Sigma_{J_n}$. From Proposition \ref{CONTRACTIVEZONE} it follows
\[
\|\rho^{n+1}(\V)-\rho^{n+1}(\W)\|\leq\lambda\|\rho^n(\V)-\rho^n(\W)\|
\]
and from the induction hypothesis we obtain
\[
\|\rho^{n+1}(\V)-\rho^{n+1}(\W)\|\leq c \lambda^{n+1}\|\V-\W\|.
\]
\qed
\end{proof}

Let $\lambda<\mu<1$ and let $n_0\in\N$ be such that
$c\left(\frac{\lambda_\alpha}{\mu}\right)^{n_0}<1$. Consider the metric $d$ defined by
\[
d(\V,\W):=\sum_{i=0}^{n_0-1}\frac{\|\rho^i(\V)-\rho^i(\W)\|}{\mu^i}\qquad\forall\, \V,\W\in\Sigma^*
\]
and the partition $\mathcal{P}'$ of $\Sigma$ defined by
$\mathcal{P}':=\{\mathcal{P}_{J_0\dots J_{n_0}},\  J_0\dots J_{n_0} \in P(I)\}$. To prove the theorem it is enough to show that $\rho$ is piecewise contractive in $\Sigma$ with respect to $\mathcal{P}'$ for the metric $d$. This is the purpose of the following calculation. First, note that for all $\V$, $\W$ in $\Sigma$, we have:
\begin{eqnarray*}
d(\rho(\V),\rho(\W))&=&\sum_{i=1}^{n_0}\frac{\|\rho^i(\V)-\rho^i(\W)\|}{\mu^{i-1}}\\
&=& \mu\left(d(\V,\W)+ \frac{\|\rho^{n_0}(\V)-\rho^{n_0}(\W)\|}{\mu^{n_0}} - \|\V-\W\|\right).
\end{eqnarray*}
Now, suppose $\V$ and $\W$ in $\mathcal{P}_{J_0\dots J_{n_0}}$ for some $J_0\dots J_{n_0} \in P(I)$. Then,
applying Lemma \ref{HYPOKATOK} we obtain:
\[
d(\rho(\V),\rho(\W))\leq
\mu\left(d(\V,\W)+ c\left(\frac{\lambda}{\mu}\right)^{n_0}\|\V-\W\| - \|\V-\W\|\right).
\]
By definition of $\mu$ and $n_0$, we have then
\[
d(\rho(\V),\rho(\W))\leq \mu d(\V,\W)
\]
which ends the proof of Theorem \ref{TMETRIC}.


\vspace{.4cm}

\noindent {\bf Acknowledgements: }
We thank  R. Budelli for many suggestions and useful discussions about the biological neural networks and their dynamics. E. C. has been supported by the Agencia Nacional de Investigaci\'on e Innovaci\'on  and by Comisi\'{o}n Sectorial de Investigaci\'{o}n Cient\'{\i}fica of Universidad de la Rep\'{u}blica, both institutions of Uruguay. P. G. has been supported by project FONDECYT 1100764.

\end{document}